\documentclass{article}
\usepackage[utf8]{inputenc}
\usepackage{graphicx}
\usepackage{amsfonts}
\usepackage{amsmath}
\usepackage{amssymb}
\usepackage{fancyhdr}
\usepackage{titlesec}
\usepackage{indentfirst}
\usepackage{booktabs}
\usepackage{verbatim}
\usepackage{color}
\usepackage{amsthm}
\usepackage{subfigure}
\usepackage{mathabx}

\usepackage[page,header]{appendix}
\usepackage{titletoc}

\newcommand{\rd}{\,\mathrm{d}}
\numberwithin{equation}{section}
\newtheorem{theorem}{Theorem}[section]
\newtheorem{lemma}[theorem]{Lemma}

\newtheorem{definition}[theorem]{Definition}
\newtheorem{remark}[theorem]{Remark}

\usepackage{mathtools}

\def\bx{{\bf x}}
\def\by{{\bf y}}
\def\bz{{\bf z}}

\def\cM{\mathcal{M}}

\def\cF{\mathcal{F}}

\def\sgn{\textnormal{sgn}}
\def\supp{\textnormal{supp\,}}
\def\diam{\textnormal{diam\,}}

\def\cpf{c_{\textnormal{PF}}}
\def\tcpf{\tilde{c}_{\textnormal{PF}}}


\topmargin 0cm \oddsidemargin 0.66cm \evensidemargin 0.66cm
\textwidth 14.66cm \textheight 22.23cm

\headheight 0cm
\headsep 0cm

\begin{document}
	
\title{Extended convexity and uniqueness of minimizers for interaction energies}

\author{ Ruiwen Shu\footnote{Department of Mathematics, University of Georgia, Athens, GA 30602 (ruiwen.shu@uga.edu).}}

\maketitle

\begin{abstract}
	Linear interpolation convexity (LIC) has served as the crucial condition for the uniqueness of interaction energy minimizers. We introduce the concept of the LIC radius which extends the LIC condition. Uniqueness of minimizer up to translation can still be guaranteed if the LIC radius is larger than the possible support size of any minimizer. Using this approach, we obtain uniqueness of minimizer for power-law potentials $W_{a,b}(\bx) = \frac{|\bx|^a}{a} - \frac{|\bx|^b}{b},\,-d<b<2$ with $a$ slightly smaller than 2 or slightly larger than 4. The estimate of LIC radius for $a$ slightly smaller than 2 is done via a Poincar\'{e}-type inequality for signed measures. To handle the case where $a$ slightly larger than 4, we truncate the attractive part of the potential at large radius and prove that the resulting potential has positive Fourier transform. We also propose to study the logarithmic power-law potential $W_{b,\ln}(\bx) = \frac{|\bx|^b}{b}\ln|\bx|$. We prove its LIC property for $b=2$ and give the explicit formula for minimizer. We also prove the uniqueness of minimizer for $b$ slightly less than 2 by estimating its LIC radius.
\end{abstract}

{\bf Keywords}: interaction energy, uniqueness, convexity.

\section{Introduction}\label{sec_intro}

In this paper we study the minimizers of the interaction energy 
\begin{equation}\label{E}
	E[\rho] = \frac{1}{2}\int_{\mathbb{R}^d}\int_{\mathbb{R}^d}W(\bx-\by)\rd{\rho(\by)}\rd{\rho(\bx)}\,,
\end{equation}
where $\rho\in \cM(\mathbb{R}^d)$ is a probability measure. $W:\mathbb{R}^d\rightarrow \mathbb{R}\cup\{\infty\}$ is an interaction potential, which satisfies the basic assumption
\begin{equation}\begin{split}
		\text{{\bf (W0)}: $W$} & \text{ is locally integrable, bounded from below} \\ & \text{and lower-semicontinuous, with $W(\bx)=W(-\bx)$.}
\end{split}\end{equation}
As a consequence, $E[\rho]$ is always well-defined for any $\rho\in \cM(\mathbb{R}^d)$, taking values in $\mathbb{R}\cup \{\infty\}$.

The existence of minimizers of $E$ is well-understood: (see also similar results in \cite{CCP15,CFT15})
\begin{theorem}[\cite{SST15}]\label{thm_exist2}
	Assume {\bf (W0)}. Assume $\lim_{|\bx|\rightarrow\infty}W(\bx)=:W_\infty$ is a real number or $\infty$. If there exists $\rho\in \cM(\mathbb{R}^d)$ such that $E[\rho]< \frac{1}{2}W_\infty$, i.e., 
	\begin{equation}\label{thm_exist2_1}
		\inf_{\rho\in\cM(\mathbb{R}^d)}E[\rho] < \frac{1}{2}W_\infty\,,
	\end{equation}
	then there exists a minimizer of $E$.
\end{theorem}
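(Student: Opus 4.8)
The plan is to run the classical concentration-compactness argument of Lions on a minimizing sequence, with the strict inequality \eqref{thm_exist2_1} serving as the binding condition that prevents mass from escaping to infinity. Set $I := \inf_{\rho\in\cM(\mathbb{R}^d)}E[\rho]$. Since {\bf (W0)} makes $W$ bounded below we have $I>-\infty$, and local integrability of $W$ produces some $\rho$ with $E[\rho]<\infty$, so $I\in\mathbb{R}$; if $W_\infty=\infty$ then \eqref{thm_exist2_1} holds automatically. Fix a minimizing sequence $(\rho_n)\subset\cM(\mathbb{R}^d)$ with $E[\rho_n]\to I$, and, passing to a subsequence, form Lions' concentration functions $Q_n(R):=\sup_{\by}\rho_n(B_R(\by))$ and $\alpha:=\lim_{R\to\infty}\lim_{n\to\infty}Q_n(R)\in[0,1]$.

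First I would exclude vanishing, $\alpha=0$. Given $\varepsilon>0$ pick $R$ with $W(\bz)\ge W_\infty-\varepsilon$ whenever $|\bz|>R$, and split $2E[\rho_n]=\int\!\!\int W(\bx-\by)\rd\rho_n(\by)\rd\rho_n(\bx)$ over $\{|\bx-\by|>R\}$ and its complement; the complement carries $(\rho_n\times\rho_n)$-mass at most $\sup_\bx\rho_n(\bar B_R(\bx))\to0$, and there $W\ge-C$, so $\liminf_n 2E[\rho_n]\ge W_\infty-\varepsilon$. Letting $\varepsilon\to0$ gives $I\ge\tfrac12 W_\infty$, contradicting \eqref{thm_exist2_1}. (When $W_\infty=\infty$ the same computation with an arbitrary large constant in place of $W_\infty-\varepsilon$ forces $I=\infty$, absurd.)

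Next I would exclude dichotomy, $0<\alpha<1$. The usual truncation of $\rho_n$ near its best center yields, after relabeling, a decomposition $\rho_n=\rho_n^1+\rho_n^2+\omega_n$ with $\supp\rho_n^1$ in a fixed-radius ball, $\supp\rho_n^2$ outside a ball of radius $R_n\to\infty$ around the same center, $|\rho_n^1|\to\alpha$, $|\rho_n^2|\to1-\alpha$, $|\omega_n|\to0$. Expanding $E$, the terms involving $\omega_n$ are $o(1)$ (they are bounded using only $W\ge-C$), $E[\rho_n^i]\ge|\rho_n^i|^2 I$ by rescaling $\rho_n^i/|\rho_n^i|$ into a probability measure, and the cross term between $\rho_n^1$ and $\rho_n^2$ is $\ge(W_\infty-\varepsilon)|\rho_n^1||\rho_n^2|$ once $R_n$ is large, since then $|\bx-\by|>R$ on the relevant set. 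Hence $\liminf_n E[\rho_n]\ge\alpha^2 I+(1-\alpha)^2 I+\alpha(1-\alpha)W_\infty=I+\alpha(1-\alpha)(W_\infty-2I)>I$ by \eqref{thm_exist2_1}, again a contradiction.

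Therefore $\alpha=1$, i.e.\ compactness holds: there exist $\bx_n\in\mathbb{R}^d$ so that $\tilde\rho_n:=\rho_n(\cdot+\bx_n)$ is tight, hence along a subsequence $\tilde\rho_n\rightharpoonup\rho$ weakly-$*$ with $\rho\in\cM(\mathbb{R}^d)$ (tightness prevents loss of mass). Finally, lower semicontinuity of $E$ finishes the proof: writing $W=\sup_k W_k$ with $W_k$ continuous, bounded and increasing (possible as $W$ is lower-semicontinuous and bounded below), monotone convergence together with $\tilde\rho_n\times\tilde\rho_n\rightharpoonup\rho\times\rho$ gives $2E[\rho]=\sup_k\int\!\!\int W_k\,\rd\rho\,\rd\rho=\sup_k\lim_n\int\!\!\int W_k\,\rd\tilde\rho_n\,\rd\tilde\rho_n\le\liminf_n 2E[\tilde\rho_n]=\liminf_n2E[\rho_n]=2I$, using translation invariance of $E$. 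Thus $E[\rho]=I$ and $\rho$ is a minimizer. The main obstacle is the dichotomy step: setting up the truncation $\rho_n=\rho_n^1+\rho_n^2+\omega_n$ rigorously and controlling every error term, and in particular making sure the cross term genuinely recovers the factor $W_\infty$ — which is exactly where one uses that $W_\infty$ is finite in the nontrivial case; the vanishing estimate and the lower-semicontinuity argument are routine.
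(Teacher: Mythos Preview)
Your concentration--compactness outline is sound and is indeed the standard route to this existence result. Note, however, that the paper does not give its own proof of Theorem~\ref{thm_exist2}: it is quoted as a known result from \cite{SST15} (with related results in \cite{CCP15,CFT15}), so there is no in-paper argument to compare against. Your sketch matches the approach of those references; the only places worth tightening in a full write-up are (i) in the dichotomy step, being explicit that the $\omega_n$ contributions are controlled only \emph{from below} by $-C|\omega_n|\to0$ (upper control is not available when $W$ is unbounded near~$0$, but is not needed), and (ii) noting that tightness of $(\tilde\rho_n)$ passes to the product measures so that $\tilde\rho_n\otimes\tilde\rho_n\rightharpoonup\rho\otimes\rho$ genuinely holds for bounded continuous test functions.
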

Under a mild extra assumption on $W$, \cite{CCP15} showed that any minimizer of $E$ is compactly supported.

In this paper we focus on the \emph{uniqueness} of minimizers up to translation. The uniqueness of minimizers has been studied by many recent works \cite{Lop19,CS21,ST,DLM1,DLM2,Fra}. The main tool to study the uniqueness of minimizers is the \emph{linear interpolation convexity (LIC)}. We say $W$ has LIC, if for any distinct compactly-supported $\rho_0,\rho_1\in\cM(\mathbb{R}^d)$ such that $\int_{\mathbb{R}^d}\bx\rho_0(\bx)\rd{\bx}=\int_{\mathbb{R}^d}\bx\rho_1(\bx)\rd{\bx}$, the function $t\mapsto E [(1-t)\rho_0+t\rho_1]$ is strictly convex on $t\in [0,1]$. For LIC potentials, the energy minimizer is necessarily unique (up to translation). The LIC property is closely related to the positivity of $\hat{W}$, the Fourier transform of $W$, due to the formal Fourier representation formula \eqref{EFou} that will be justified for some potentials. 

One important family of attractive-repulsive interaction potentials are the power-law potentials
\begin{equation}\label{Wab}
	W_{a,b}(\bx) = \frac{|\bx|^a}{a} - \frac{|\bx|^b}{b}\,,
\end{equation}
where $-d < b < a$. Here we adopt the tradition that $\frac{|\bx|^0}{0}$ is understood as $\ln|\bx|$. The associated interaction energy given by \eqref{E} will be denoted as $E_{a,b}$. For these potentials, it is known that LIC holds for $-d<b\le 2,\,2\le a \le 4,\,b<a$, see \cite{Lop19,CS21,Fra}, except for the case $(a,b)=(4,2)$ which satisfies a non-strict LIC, see \cite{DLM2}.

Convexity theory is also crucial for obtaining explicit formula for minimizers. In fact, it is known that any minimizer satisfies the Euler-Lagrange condition \cite{BCLR13_1}, see Theorem \ref{thm_EL}, which is a necessary condition. However, if LIC holds, then it is also a sufficient condition \cite{CS21}. Therefore, for LIC potentials, one can prove that some prescribed distribution is the unique minimizer as long as the Euler-Lagrange condition is verified, which only requires explicit calculation. Explicit formulas for the minimizer were obtained in this way for some power-law potentials, which include minimizers as a locally integrable function \cite{CH17,CS21,Fra,Shu_expli}, spherical shells \cite{BCLR13_2,DLM1,FM} and discrete measures \cite{DLM2}. Minimizers for some anisotropic interaction energies in 2D and 3D were also derived from this approach \cite{MRS,CMMRSV1,CMMRSV2,MMRSV1,MMRSV2,CS_2D,CS_3D}.

Despite the success of the LIC property in the study of uniqueness and explicit formula of minimizers, it is not a necessary condition for the uniqueness. To the best of the author's knowledge, the only existing result on the uniqueness of minimizer for non-LIC potentials is \cite{DLM2}. In this paper, it was proved that the sum of equal Dirac masses, located at the vertices of a unit simplex, is the unique minimizer for $W_{a,b}$ if $a\ge \min\{2+d,4\}$, $b\ge 2$, $b<a$. In this case the minimizer is unique up to translation and \emph{rotation}, which has a different nature from the uniqueness derived from the LIC condition.

In this paper we aim to extend the LIC condition so that the uniqueness of minimizer up to translation can be proved for a wider class of potentials. The first two main results of this paper give the uniqueness of minimizers for power-law potentials $W_{a,b}$ with extended ranges of $(a,b)$, stated as follows.
\begin{theorem}\label{thm_main1}
	Consider the interaction potential $W_{a,b}$ in \eqref{Wab} with $-d<b<a$ and the associated energy $E_{a,b}$ given by \eqref{E}. For any $-d<b<2$, there exists $a_-(b)<2$, such that the minimizer of $E_{a,b}$ is unique up to translation if $a_-(b)<a<2$.
\end{theorem}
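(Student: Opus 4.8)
The plan is to apply the LIC-radius framework: even when $W_{a,b}$ fails to have LIC (which it does for $a<2$, since the Fourier transform $\hat W_{a,b}$ is negative at low frequencies), a \emph{finite} LIC radius $R_{\mathrm{LIC}}(W_{a,b})$ still forces the minimizer of $E_{a,b}$ to be unique up to translation provided $R_{\mathrm{LIC}}(W_{a,b})$ exceeds the diameter of the support of every minimizer. It therefore suffices to establish two facts. (i) \textbf{Uniform confinement:} there exist $\bar D=\bar D(b)<\infty$ and $a_1(b)<2$ such that $\diam\,(\supp\rho)\le\bar D$ for every minimizer $\rho$ of $E_{a,b}$ and every $a\in(a_1(b),2)$. (ii) \textbf{Divergence of the LIC radius:} $R_{\mathrm{LIC}}(W_{a,b})\to\infty$ as $a\to 2^-$, for each fixed $-d<b<2$. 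Granting these, pick $a_-(b)\in(a_1(b),2)$ with $R_{\mathrm{LIC}}(W_{a,b})>\bar D(b)$ on $(a_-(b),2)$; for two minimizers $\rho_0,\rho_1$, translate $\rho_1$ so that the two centers of mass coincide, so that $\mu:=\rho_1-\rho_0$ satisfies $\mu(\mathbb{R}^d)=0$, $\int\bx\rd\mu=0$, and (each center of mass lying in the convex hull of its support) $\supp\mu$ lies in a ball of radius $\le\bar D<R_{\mathrm{LIC}}(W_{a,b})$; the LIC-radius criterion then makes $t\mapsto E_{a,b}[(1-t)\rho_0+t\rho_1]$ strictly convex, forcing $\rho_0=\rho_1$.

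For (ii) I would work on the Fourier side. Writing $\widehat{|\bx|^s}=c_{d,s}|\xi|^{-d-s}$ (valid as a regularised distribution for $-d<s<2$, with the explicit Gamma-function constant $c_{d,s}$ that is negative for $0<s<2$, positive for $-d<s<0$, and --- because $\Gamma(-s/2)$ has a pole at $s=2$ --- tends to $0$ as $s\to 2^-$), the second variation of $E_{a,b}$ along $\mu$ equals, up to a positive constant, $-p(a)\,N_a[\mu]+q(b)\,N_b[\mu]$, where $N_s[\mu]:=\int_{\mathbb{R}^d}|\hat\mu(\xi)|^2|\xi|^{-d-s}\rd\xi\ge 0$ (finite when $0<b<2$, since the moment conditions give $|\hat\mu(\xi)|=O(|\xi|^2)$ near $0$ and $a<4$; the case $-d<b<0$ with $N_b[\mu]=+\infty$ being trivial), $p(a)=|c_{d,a}|/a\to 0$ as $a\to 2^-$, and $q(b)=|c_{d,b}|/|b|>0$. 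Positivity of the second variation is thus the inequality $N_b[\mu]>\gamma(a,b)\,N_a[\mu]$ with $\gamma(a,b):=p(a)/q(b)\to 0$ as $a\to 2^-$. Using the scaling $N_s[\mu]=R^s N_s[\tilde\mu]$ for $\mu$ the pushforward of a unit-ball measure $\tilde\mu$ under $\bx\mapsto R\bx$, this inequality holds for \emph{all} admissible $\mu$ supported in a ball of radius $R$ exactly when $R^{a-b}<(\gamma(a,b)\Lambda(a,b))^{-1}$, where $\Lambda(a,b):=\sup N_a[\tilde\mu]/N_b[\tilde\mu]$, the supremum over nonzero signed measures $\tilde\mu$ supported in the closed unit ball with $\tilde\mu(\mathbb{R}^d)=0$ and $\int\bx\rd\tilde\mu=0$. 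Hence the LIC radius is bounded below by $(\gamma(a,b)\Lambda(a,b))^{-1/(a-b)}$, and since $\gamma(a,b)\to 0$ while $\Lambda(a,b)$ stays bounded as $a\to 2^-$, we conclude $R_{\mathrm{LIC}}(W_{a,b})\to\infty$.

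The heart of the matter --- and where I expect the real difficulty --- is the Poincar\'e-type bound $\Lambda(a,b)<\infty$: a signed measure supported in a fixed ball with vanishing mass and vanishing first moment cannot concentrate a disproportionate share of its weighted Fourier mass at low frequencies. The high-frequency part is immediate: $|\xi|^{-d-a}\le|\xi|^{-d-b}$ on $\{|\xi|\ge 1\}$, so $\int_{|\xi|\ge 1}|\hat{\tilde\mu}|^2|\xi|^{-d-a}\le N_b[\tilde\mu]$. The low-frequency part $\int_{|\xi|\le 1}|\hat{\tilde\mu}|^2|\xi|^{-d-a}$ carries the Poincar\'e content; here the naive bound $|\hat{\tilde\mu}(\xi)|\le C|\xi|^2\,|\tilde\mu|(\mathbb{R}^d)$ is far too lossy, because the total mass $|\tilde\mu|(\mathbb{R}^d)$ is not controlled by $N_b[\tilde\mu]$ (quadrupoles concentrating at a point give $|\tilde\mu|(\mathbb{R}^d)\to\infty$ with $N_b[\tilde\mu]$ bounded, although for them $N_a[\tilde\mu]/N_b[\tilde\mu]\to 0$). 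One must instead localise --- writing $\tilde\mu=\psi\tilde\mu$ for a fixed Schwartz cutoff $\psi\equiv1$ on the unit ball, so $\hat{\tilde\mu}=(2\pi)^{-d}\hat\psi\ast\hat{\tilde\mu}$, which exhibits the low frequencies of $\hat{\tilde\mu}$ as a rapidly-weighted average of all of its frequencies --- and combine this with the double vanishing at the origin to bound the low-frequency part by $C(d,a,b)\,N_b[\tilde\mu]$ with a constant independent of $|\tilde\mu|(\mathbb{R}^d)$; alternatively one runs a concentration-compactness argument, the only loss of compactness in the admissible class being point-concentrations, which again contribute a vanishing ratio $N_a/N_b$. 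The confinement statement (i) is more routine: it follows from the Euler--Lagrange characterisation of minimizers (Theorem \ref{thm_EL}) together with the coercivity of $W_{a,b}$ (a point of $\supp\rho$ far from the bulk would force $W_{a,b}\ast\rho$ there to grow like $|\bx|^a/a$, exceeding the bounded Euler--Lagrange constant $2E_{a,b}[\rho]$), with constants stable as $a\to 2^-$ because the limiting potential $W_{2,b}$ satisfies LIC and hence already has a unique, compactly supported minimizer.
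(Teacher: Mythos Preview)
Your plan matches the paper's proof in all essential respects: both reduce uniqueness to the LIC-radius criterion (Lemma~\ref{lem_LICR}), invoke a uniform-in-$a$ bound on minimizer support (Theorem~\ref{thm_existab}), pass to the Fourier side (Theorem~\ref{thm_RLICab}), and then exploit the vanishing $\cpf(-a)\to\cpf(-2)=0$ as $a\to 2^-$ together with the Poincar\'e-type comparison $N_a[\mu]\le C\,N_b[\mu]$ for compactly supported signed measures with two vanishing moments. Your formula $R_{\mathrm{LIC}}\ge(\gamma(a,b)\Lambda(a,b))^{-1/(a-b)}$ is exactly the content of the paper's Remark~\ref{rmk_sharp}.

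The one place where the paper does something different from what you sketch is in proving the Poincar\'e inequality itself (Theorem~\ref{thm_powercomp}). Rather than the cutoff/convolution identity $\hat{\tilde\mu}=\hat\psi*\hat{\tilde\mu}$ or a compactness argument, the paper mollifies $\nu:=\phi*\mu$, uses the two vanishing moments to get the pointwise bound $|\hat\nu(\xi)|\le C\|\nu\|_{L^2}\,|\xi|^2$, and then closes the loop via the Heisenberg uncertainty principle $\int|\xi|^2|\hat\nu|^2\ge c\|\nu\|_{L^2}^2$ (valid because $\nu$ has support in a fixed ball). This route has the advantage of producing an explicit constant that is visibly uniform as $a\to 2^-$ (Remark~\ref{rmk_powercomp}), which you need for the claim ``$\Lambda(a,b)$ stays bounded'' but only assert. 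Your compactness alternative would establish $\Lambda(a,b)<\infty$ for each fixed $a$ but would not by itself give the required uniformity; your cutoff route is closer in spirit to the paper's mollification but still has to find a substitute for the Heisenberg step to pass from $|\hat\mu|\lesssim|\xi|^2\cdot(\text{something})$ to a bound by $N_b[\mu]$ rather than by the (uncontrolled) total variation. A minor quibble: in the LIC-radius formulation one restricts to $\mu$ with $E_{a,b}[|\mu|]<\infty$, so the case ``$N_b[\mu]=+\infty$'' does not actually arise.
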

\begin{theorem}\label{thm_main2}
	Consider the interaction potential $W_{a,b}$ in \eqref{Wab} with $-d<b<a$ and the associated energy $E_{a,b}$ given by \eqref{E}. For any $-d<b<2$, there exists $a_+(b)>4$, such that the minimizer of $E_{a,b}$ is unique up to translation if $4<a<a_+(b)$.
\end{theorem}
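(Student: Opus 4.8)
The plan is to apply the LIC-radius criterion (uniqueness of minimizer up to translation holds once the LIC radius of $W_{a,b}$ exceeds the a priori support size of any minimizer): I would bound the diameter of the support of any minimizer of $E_{a,b}$ from above, bound the LIC radius of $W_{a,b}$ from below by a truncation argument, and observe that for $a$ close enough to $4$ the latter strictly exceeds the former.

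For the support bound, note that $W_{a,b}(\bx)\to+\infty$ as $|\bx|\to\infty$, so hypothesis \eqref{thm_exist2_1} of Theorem \ref{thm_exist2} holds (it suffices that $E_{a,b}$ be finite on one compactly supported competitor), a minimizer exists, and by \cite{CCP15} it is compactly supported. Normalizing the center of mass to the origin, the Euler--Lagrange condition of Theorem \ref{thm_EL} ($W_{a,b}*\rho\equiv\lambda$ on $\supp\rho$, $\ge\lambda$ off it), together with the fact that $W_{a,b}*\rho$ is dominated by the growth of $\tfrac{|\bx|^a}{a}$ far from $\supp\rho$ -- alternatively, comparison with dilated competitors -- yields an explicit bound $\diam(\supp\rho)\le D_0(a,b,d)$. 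The only feature I need is that $D_0(a,b,d)$ can be bounded uniformly for $a$ in a right-neighborhood of $4$, which holds because the confinement at large radii only strengthens as $a$ increases.

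The lower bound on the LIC radius is the heart of the matter, and this is where the truncation enters. Fix a radius $R_0>0$ and truncate the attractive power: let $\psi(r)=\tfrac{r^a}{a}$ for $r\le R_0$ and let $\psi(r)$ be a suitably matched constant for $r>R_0$, and set $\tilde W(\bx)=\psi(|\bx|)-\tfrac{|\bx|^b}{b}$. Then $\tilde W=W_{a,b}$ on $B_{R_0}$, so $\iint\tilde W\,\rd\mu\,\rd\mu=\iint W_{a,b}\,\rd\mu\,\rd\mu$ for every signed measure $\mu$ with zero mass, zero first moment and $\supp\mu\subset B_{R_0/2}$; and since $\tilde W$ now grows strictly slower than $|\bx|^2$, the Fourier representation \eqref{EFou} is legitimate for $\tilde W$ on such $\mu$. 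Hence it suffices to prove $\widehat{\tilde W}\ge 0$ with strict positivity on a nonempty open set: then $\iint W_{a,b}\,\rd\mu\,\rd\mu=\iint\tilde W\,\rd\mu\,\rd\mu>0$ for all nonzero such $\mu$ (using that $\widehat\mu$ is analytic, hence vanishes only on a null set), i.e.\ the LIC radius of $W_{a,b}$ is at least $R_0/2$. Now $\widehat{-|\bx|^b/b}$ is a strictly positive function on $\mathbb{R}^d\setminus\{0\}$ (this is where $-d<b<2$ is used) that blows up at the origin, while $\widehat\psi$ is, modulo a Dirac mass at the origin, a bounded continuous function decaying faster than $|\xi|^{-b-d}$ at infinity; so $\widehat{\tilde W}>0$ both for $|\xi|$ small and for $|\xi|$ large, and the truncation is exactly what removes the sign indefiniteness of $\widehat{|\bx|^a}$ in the intermediate band. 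The main obstacle is therefore the explicit estimate $\widehat\psi(\xi)+\widehat{-|\bx|^b/b}(\xi)\ge 0$ for all $\xi\ne0$, together with the quantitative claim that the largest admissible truncation radius $R_0(a,b,d)$ tends to $\infty$ as $a\downarrow 4$. I expect the latter because at $a=4$ the potential $W_{4,b}$ is already LIC for $-d<b<2$ (by \cite{Lop19,CS21,Fra}; equivalently $\iint|\bx-\by|^4\,\rd\mu\,\rd\mu$ is a nonnegative quadratic form in the second moments of $\mu$ while $\widehat{-|\bx|^b/b}>0$), so ``no truncation'' ($R_0=\infty$) already works there; transferring this to $a$ slightly larger than $4$, uniformly over all frequencies and in particular as $|\xi|\to0$ and $|\xi|\to\infty$, is the delicate computation.

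Finally, fix $b\in(-d,2)$ and let $a_+(b)$ be the supremum of those $\bar a>4$ for which $R_0(a,b,d)/2>D_0(a,b,d)$ for all $a\in(4,\bar a)$; the two steps above show $a_+(b)>4$. For $a\in(4,a_+(b))$ and any two minimizers $\rho_0,\rho_1$ of $E_{a,b}$, translate them to share the same center of mass; by the support bound each is then supported in $\overline{B(0,D_0)}$, so $\mu:=\rho_1-\rho_0$ has zero mass, zero first moment and $\supp\mu\subset B_{R_0/2}$. The LIC-radius bound gives $\iint W_{a,b}\,\rd\mu\,\rd\mu>0$ unless $\mu=0$; since $\tfrac{d^2}{dt^2}E_{a,b}[(1-t)\rho_0+t\rho_1]=\iint W_{a,b}\,\rd\mu\,\rd\mu$, the interpolant energy is strictly convex unless $\rho_0=\rho_1$, which is impossible when both endpoints minimize $E_{a,b}$. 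Hence the minimizer of $E_{a,b}$ is unique up to translation for $4<a<a_+(b)$.
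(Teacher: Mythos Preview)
Your overall architecture matches the paper's: a uniform-in-$a$ support bound (the paper's Theorem~\ref{thm_existab} gives $R_*(b,4)$), a lower bound on $R_{\textnormal{LIC}}[W_{a,b}]$ obtained by truncating the attractive part and passing to the Fourier side, and then Lemma~\ref{lem_LICR}. The genuine gap is exactly where you flag ``the main obstacle'': your claim that the admissible truncation radius $R_0(a)\to\infty$ as $a\downarrow 4$ is not only unproven, it is false for the truncation you propose. Your heuristic (``$W_{4,b}$ is LIC, so $R_0=\infty$ works at $a=4$'') misidentifies the source of that LIC: it is \emph{not} Fourier positivity---indeed $\cF[|\bx|^4]=0$ on $\mathbb{R}^d\setminus\{0\}$---but the algebraic nonnegativity of $\iint|\bx-\by|^4\rd\mu\rd\mu$ that you yourself mention. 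A scaling computation exposes the problem: with a smooth cutoff $\psi_a^{(R_0)}(\bx)=\tfrac{|\bx|^a}{a}\Phi(\bx/R_0)=R_0^a\,\psi_a^{(1)}(\bx/R_0)$, the requirement $\widehat{\psi_a^{(R_0)}}(\xi)+\cpf(-b)|\xi|^{-d-b}\ge 0$ becomes, after $\eta=R_0\xi$, $R_0^{\,a-b}\,\widehat{\psi_a^{(1)}}(\eta)+\cpf(-b)|\eta|^{-d-b}\ge 0$. But $\psi_a^{(1)}$ is nonnegative, not identically zero, and vanishes at the origin, so it cannot be positive-definite and $\widehat{\psi_a^{(1)}}$ is strictly negative at some $\eta_0$; since $a-b>0$, the inequality fails at $\eta_0$ once $R_0$ is large. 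Hence the maximal admissible $R_0(a)$ is finite and stays bounded as $a\to 4$, so your argument yields only a fixed LIC-radius lower bound with no mechanism to make it exceed the support bound.

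The paper's key idea is precisely to decouple $|\bx|^4$ from the Fourier argument: write $\tfrac{|\bx|^a}{a}=\tfrac1a(|\bx|^a-|\bx|^4)+\tfrac1a|\bx|^4$, use the nonstrict LIC of $|\bx|^4$ \emph{directly} (algebraically, not via Fourier) to discard the second piece as a nonnegative contribution to $E_{a,b}[\mu]$, and smoothly truncate only the difference. Lemma~\ref{lem_a4} then shows $\big|\cF[(|\bx|^a-|\bx|^4)\Phi](\xi)\big|\le\eta_a(1+|\xi|)^{-d-a}$ with $\eta_a=O(a-4)$; after rescaling so that $\mu$ is supported in $\overline{B(0;1/2)}$, this $O(a-4)$ perturbation is dominated by $\cpf(-b)|\xi|^{-d-b}$ for $a$ close enough to $4$, and $E_{a,b}[\mu]>0$ follows. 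The subtraction of $|\bx|^4$ is the missing ingredient that manufactures the needed smallness; without it the ``delicate computation'' cannot close. (A minor further issue: your hard cutoff makes $\psi$ only $C^0$, so $\widehat\psi$ need not decay faster than $|\xi|^{-d-b}$ when $b$ is close to $2$; a smooth truncation, as the paper uses, is required in any case.)
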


Another related potential is the logarithmic power-law potential
\begin{equation}\label{Wbln}
	W_{b,\ln}(\bx) = \frac{|\bx|^b}{b}\ln|\bx|\,,
\end{equation}
with $b>-d,\,b\ne 0$, which can be viewed as the asymptotic limit as $a\rightarrow b_+$ for $W_{a,b}$ under suitable rescaling. The associated interaction energy given by \eqref{E} will be denoted as $E_{b,\ln}$. We first show that the logarithmic power-law potential $W_{2,\ln}(\bx)=\frac{|\bx|^2}{2}\ln|\bx|$ is LIC and derive the explicit formula for its associated energy minimizer.
\begin{theorem}\label{thm_Wbln}
	The logarithmic power-law potential $W_{2,\ln}(\bx)=\frac{|\bx|^2}{2}\ln|\bx|$ is LIC. Its associated energy minimizer is unique (up to translation), given by
	\begin{equation}
		\rho(x) = C(R^2-x^2)_+^{-1/2},\quad R = \exp\Big[-\frac{1}{2} + \frac{\rd}{\rd{a}}\Big|_{a=2} \Big( \frac{\Gamma(\frac{3-a}{2})\sin\big((a-1)\frac{\pi}{2}\big)}{\Gamma(\frac{4-a}{2})(a-1)\sqrt{\pi}}\Big)\Big]\,,
	\end{equation}
	for $d=1$ where $C$ is a normalizing constant, and
	\begin{equation}
		\rho(x) = \delta_{\partial B(0;R)},\quad R = \exp\Big[-\frac{1}{2} + \frac{\rd}{\rd{a}}\Big|_{a=2} \Big(\frac{\Gamma(\frac{d+1}{2})\Gamma(\frac{2d+a-2}{2})}{2^{a-2}\Gamma(\frac{d+a-1}{2})\Gamma(d)}\Big)\Big]\,,
	\end{equation}
	for $d\ge 2$ where $\delta_{\partial B(0;R)}$ denotes the uniform distribution on $\partial B(0;R)$ with total mass 1.
\end{theorem}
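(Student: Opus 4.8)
The plan is to establish the three assertions in order: the LIC property, then uniqueness of the minimizer as an immediate consequence, and finally the explicit formulas via the Euler--Lagrange condition. The basic observation driving everything is the exact identity $W_{2,\ln}(\bx)=\tfrac12\,\tfrac{\rd}{\rd a}\big|_{a=2}|\bx|^{a}$, which converts assertions about $W_{2,\ln}$ into $a$-derivatives of the corresponding assertions about the pure power $|\bx|^{a}$. For LIC, take distinct compactly supported probability measures $\rho_{0},\rho_{1}$ with the same first moment and set $\mu=\rho_{1}-\rho_{0}$, so that $\hat\mu$ vanishes to second order at the origin; since the second derivative of $t\mapsto E_{2,\ln}[(1-t)\rho_{0}+t\rho_{1}]$ equals $\int\int W_{2,\ln}(\bx-\by)\rd\mu(\by)\rd\mu(\bx)$, I must show this is positive unless $\mu=0$. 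Feeding the classical transform $\widehat{|\bx|^{a}}=c(d,a)|\xi|^{-d-a}$ into the representation~\eqref{EFou} gives $\int\int|\bx-\by|^{a}\rd\mu\rd\mu=c(d,a)\int|\xi|^{-d-a}|\hat\mu(\xi)|^{2}\rd\xi$, with both sides vanishing at $a=2$ (the zero first moment on the left, the zero of $c(d,a)$ coming from the pole of $\Gamma(-a/2)$ at $-1$ on the right), so differentiating at $a=2$ leaves exactly $\int\int W_{2,\ln}(\bx-\by)\rd\mu\rd\mu=\tfrac12 c'(d,2)\int|\xi|^{-d-2}|\hat\mu(\xi)|^{2}\rd\xi$ with $c'(d,2)>0$; this integral converges (its integrand is $O(|\xi|^{2-d})$ near $\xi=0$ and decays at infinity) and is strictly positive unless $\hat\mu\equiv0$. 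Hence $W_{2,\ln}$ is LIC; since $W_{2,\ln}\to+\infty$ at infinity while $E_{2,\ln}$ is finite on, e.g., a normalized indicator of a ball, Theorem~\ref{thm_exist2} gives existence, \cite{CCP15} gives compact support of any minimizer, and LIC then yields uniqueness up to translation.

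For the explicit formulas, by Theorem~\ref{thm_EL} and the sufficiency of the Euler--Lagrange condition under LIC \cite{CS21} it suffices to exhibit a probability measure $\rho$ for which $W_{2,\ln}*\rho$ is constant on $\supp\rho$ and $\ge$ that constant elsewhere; I will take $\rho$ to be the scaled candidate $\rho_{R}$ from the statement. The engine is the pseudo-scaling $W_{2,\ln}(R\bz)=R^{2}\big(\tfrac{\ln R}{2}|\bz|^{2}+W_{2,\ln}(\bz)\big)$, which, with $\rho_{1}$ the unit-scale candidate and $\bx=R\bs$, gives
\[
(W_{2,\ln}*\rho_{R})(R\bs)=R^{2}\Big[\tfrac{\ln R}{2}\big(|\cdot|^{2}*\rho_{1}\big)(\bs)+\tfrac12\,\tfrac{\rd}{\rd a}\Big|_{a=2}\big(|\cdot|^{a}*\rho_{1}\big)(\bs)\Big].
\]

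In dimension $d\ge 2$ with $\rho_{1}=\delta_{\partial B(0;1)}$, radial symmetry makes the right-hand side automatically constant on $\{|\bs|=1\}$, so $R$ is pinned by the requirement that the radial profile be minimized at $|\bs|=1$, i.e.\ by a first-order condition there. Using $\big(|\cdot|^{2}*\rho_{1}\big)(\bs)=|\bs|^{2}+1$ together with the clean identity $\tfrac{\rd}{\rd s}\big|_{s=1}\!\int_{\mathbb{S}^{d-1}}\!|se_{1}-\omega|^{a}\rd\sigma(\omega)=a/\Phi(d,a)$, where $\Phi(d,a)=\frac{\Gamma(\frac{d+1}{2})\Gamma(\frac{2d+a-2}{2})}{2^{a-2}\Gamma(\frac{d+a-1}{2})\Gamma(d)}$ (this comes from the Beta integral computing $\langle\sin^{a}(\theta/2)\rangle$ over the sphere), and $\Phi(d,2)=1$, the first-order condition becomes $\ln R=-\tfrac12+\tfrac{\rd}{\rd a}\big|_{a=2}\Phi(d,a)$, exactly the stated formula. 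In dimension $d=1$ with $\rho_{1}(x)=\tfrac1\pi(1-x^{2})_{+}^{-1/2}$, the needed ingredient is the Chebyshev-type identity $\big(|\cdot|^{a}*\rho_{1}\big)(s)=\frac{\Gamma(\frac{a+1}{2})}{\sqrt{\pi}\,\Gamma(\frac{a+2}{2})}\,{}_{2}F_{1}\!\big(-\tfrac a2,-\tfrac a2;\tfrac12;s^{2}\big)$ for $s\in[-1,1]$; on differentiating at $a=2$, every term of the hypergeometric series with index $k\ge 2$ drops because $(-1)_{k}=0$, so $\tfrac{\rd}{\rd a}\big|_{a=2}\big(|\cdot|^{a}*\rho_{1}\big)(s)$ is an explicit quadratic in $s$, and requiring the bracket above to be constant on $[-1,1]$ cancels its $s^{2}$-coefficient and pins $\ln R$; rewriting the resulting combination of digamma values with the reflection formula $\Gamma(z)\Gamma(1-z)=\pi/\sin(\pi z)$ and the recursion puts it in the stated form $-\tfrac12+\tfrac{\rd}{\rd a}\big|_{a=2}\big(\frac{\Gamma(\frac{3-a}{2})\sin((a-1)\frac{\pi}{2})}{\Gamma(\frac{4-a}{2})(a-1)\sqrt{\pi}}\big)$.

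The main obstacle is not the value of $R$ — which, as above, is forced by the equality part of the Euler--Lagrange condition and reduces to Gamma-function bookkeeping — but the \emph{inequality} part: showing $W_{2,\ln}*\rho_{R}\ge(W_{2,\ln}*\rho_{R})\big|_{\supp\rho_{R}}$ off the support, equivalently that the radial profile of $W_{2,\ln}*\rho_{R}$ attains its global minimum on $\partial B(0;R)$ (resp.\ on $[-R,R]$). I would again route this through the $a$-derivative representation and known monotonicity/convexity properties of $|\cdot|^{a}*\rho_{1}$ for $a$ near $2$, and this is the step most likely to require a delicate argument. A secondary technical point is justifying the two special-function identities and enough boundary regularity of $|\cdot|^{a}*\rho_{1}$ at $|\bs|=1$ to legitimately use the first-order condition, and — for LIC — the interchange of $\tfrac{\rd}{\rd a}$ with the integrals, handled as in the justification of~\eqref{EFou}.
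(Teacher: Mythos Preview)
Your LIC argument is essentially the paper's: both compute that $\cF[W_{2,\ln}](\xi)=\tcpf(-2)\,|\xi|^{-d-2}$ with $\tcpf(-2)=\cpf'(-2)>0$ and conclude via the Fourier representation~\eqref{EFou}. The paper packages this through Theorems~\ref{thm_powerF} and~\ref{thm_RLICab} rather than by differentiating the power-law identity in $a$, but the content is identical, and the paper's framework (Theorem~\ref{thm_Fourep}, Fourier representability of $W_{2,\ln}$ at level~2) supplies the justification that your interchange of $\tfrac{\rd}{\rd a}$ with the integrals would otherwise require.

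For the explicit minimizer the two approaches genuinely diverge, and the divergence is precisely at the gap you flag. You compute $W_{2,\ln}*\rho_R$ from scratch via special-function identities, pin $R$ from the equality part of Euler--Lagrange, and leave the global inequality $W_{2,\ln}*\rho_R\ge C_0$ off the support as an open ``delicate argument''. The paper instead starts from the \emph{known} minimizers of $E_{a,2}$ for $a$ slightly larger than $2$ --- namely $C(R_a^2-x^2)_+^{-(a-1)/2}$ in $d=1$ from \cite{Fra} and $\delta_{\partial B(0;R_a)}$ in $d\ge 2$ from \cite{FM} --- which already satisfy the \emph{full} Euler--Lagrange condition, both equality and inequality. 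After rescaling by $R_a$ so that the support is fixed, the paper rewrites the potential as $\tilde W_{a,2}=\frac{|x|^a-|x|^2}{2}+(1-\frac{a}{2R_a^{a-2}})\frac{|x|^2}{2}$ and passes to the limit $a\to 2^+$, obtaining $\frac{1}{a-2}\tilde W_{a,2}\to W_{2,\ln}+\lambda\frac{|x|^2}{2}$ uniformly on compact sets. Because the inequality $\tilde W_{a,2}*\rho_1\ge \tilde C_a$ holds for every $a>2$, it survives the limit automatically; a final rescaling by $e^{-\lambda}$ removes the quadratic correction. Thus the paper never has to analyze the radial profile of $W_{2,\ln}*\rho_R$ directly --- the inequality is inherited from the nearby problems where it is already known. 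Your route is workable in principle but would require precisely the off-support analysis you identify as the obstacle; the paper's limiting argument sidesteps it entirely.
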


Similar result for the uniqueness of minimizer is also obtained for $W_{b,\ln}$ when $b$ is slightly less than 2 (for which LIC does not hold).
\begin{theorem}\label{thm_main3}
	Consider the interaction potential $W_{b,\ln}$ in \eqref{Wbln} with $b>-d,\,b\ne 0$ and the associated energy $E_{b,\ln}$ given by \eqref{E}. There exists $b_*<2$, such that the minimizer of $E_{b,\ln}$ is unique up to translation if $b_*<b\le 2$.
\end{theorem}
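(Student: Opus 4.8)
The plan is to run the LIC-radius machinery, paralleling the proof of Theorem~\ref{thm_main1}. The soft ingredients come first. By Theorem~\ref{thm_exist2} --- whose hypothesis holds trivially here, since $W_\infty=+\infty$ while any compactly supported $\rho$ has finite energy --- and the compact-support result of \cite{CCP15}, for $b$ in a left-neighborhood of $2$ the energy $E_{b,\ln}$ admits a minimizer and every minimizer is compactly supported. Next I would establish a \emph{uniform} support bound: there exist $b_1<2$ and $R_0<\infty$ such that, for all $b\in(b_1,2]$, every minimizer of $E_{b,\ln}$ is --- after a translation --- supported in $\overline{B(0;R_0)}$. This follows from the Euler--Lagrange condition (Theorem~\ref{thm_EL}) by evaluating it at a point of $\supp\rho$ farthest from the barycenter and using that $r\mapsto\frac{r^b}{b}\ln r$ is increasing and convex for $r$ large with constants depending continuously on $b$, so that the bound stays finite as $b\uparrow2$ (where it is comparable to the explicit radius $R$ of Theorem~\ref{thm_Wbln}). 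The LIC-radius uniqueness criterion --- a minimizer is unique up to translation once the LIC radius of the potential exceeds the possible support size --- then reduces the theorem to showing that, for $b$ sufficiently close to $2$,
\begin{equation}\label{eq:sk-Q}
	Q_b[\mu]:=\iint_{\mathbb{R}^d\times\mathbb{R}^d}W_{b,\ln}(\bx-\by)\rd{\mu(\by)}\rd{\mu(\bx)}>0
\end{equation}
for every nonzero signed measure $\mu$ with $\mu(\mathbb{R}^d)=0$, $\int_{\mathbb{R}^d}\bx\rd{\mu(\bx)}=0$ and $\supp\mu\subseteq\overline{B(0;R_0)}$.

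I would prove \eqref{eq:sk-Q} by perturbing off $b=2$. At $b=2$, Theorem~\ref{thm_Wbln} gives LIC, and quantitatively, via the Fourier representation \eqref{EFou}, $Q_2[\mu]=c\int_{\mathbb{R}^d}|\xi|^{-d-2}|\hat\mu(\xi)|^2\rd{\xi}$ with $c>0$, the integral being finite because the two moment conditions force $\hat\mu(\xi)=O(|\xi|^2)$ near $\xi=0$. For $b<2$ close to $2$ one checks that $\hat{W}_{b,\ln}(\xi)$ is negative precisely on a ball $\{|\xi|<\epsilon_b\}$ with $\epsilon_b\downarrow0$ as $b\uparrow2$, while for $|\xi|\ge\epsilon_b$ it is positive and decays only like $|\xi|^{-d-b}$, more slowly than $|\xi|^{-d-2}$. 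Splitting $Q_b[\mu]=\int_{|\xi|\ge\epsilon_b}\hat{W}_{b,\ln}|\hat\mu|^2-\int_{|\xi|<\epsilon_b}|\hat{W}_{b,\ln}|\,|\hat\mu|^2$, the first term is nonnegative and the second --- the only possible source of negativity --- ought to be small: on $\{|\xi|<\epsilon_b\}$ one has $|\hat{W}_{b,\ln}(\xi)|\le C|\xi|^{-d-2}$ with $C$ independent of $b$, and the moment conditions give $|\hat\mu(\xi)|\le C_{R_0}|\xi|^2\|\mu\|_{\mathrm{TV}}$; combined with the fact that a signed measure supported in $\overline{B(0;R_0)}$ must carry a definite amount of content at frequencies of order one (an uncertainty-type bound), this should yield
\begin{equation}\label{eq:sk-poincare}
	\int_{|\xi|<\epsilon_b}|\hat{W}_{b,\ln}(\xi)|\,|\hat\mu(\xi)|^2\rd{\xi}\ \le\ \eta(b)\int_{|\xi|\ge\epsilon_b}\hat{W}_{b,\ln}(\xi)\,|\hat\mu(\xi)|^2\rd{\xi}
\end{equation}
with $\eta(b)\to0$ as $b\uparrow2$, uniformly over admissible $\mu$ supported in $\overline{B(0;R_0)}$. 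Given \eqref{eq:sk-poincare}, \eqref{eq:sk-Q} follows as soon as $\eta(b)<1$, proving the theorem.

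The hard part is \eqref{eq:sk-poincare}: a Poincar\'e-type inequality for signed measures, of the same kind used to estimate the LIC radius in Theorem~\ref{thm_main1}, which I would adapt to the present potential. Its subtlety is that neither side is controlled by a single homogeneous Sobolev norm --- the left-hand side lives at frequencies below the tiny threshold $\epsilon_b$, where $\hat\mu$ is only quadratically small, whereas the right-hand side measures frequencies above $\epsilon_b$ with the slowly-decaying weight $\hat{W}_{b,\ln}(\xi)\sim|\xi|^{-d-b}$ --- so a correct proof must simultaneously exploit the smallness of $\epsilon_b$, the two moment cancellations, the spatial localization $\supp\mu\subseteq\overline{B(0;R_0)}$, and the positivity of $\hat{W}_{b,\ln}$ on the \emph{whole} region $\{|\xi|\ge\epsilon_b\}$ (not merely on one annulus), keeping track of the $R_0$-dependence so that $R_0$ can be fixed before $b$ is sent to $2$. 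The remaining pieces --- the uniform support bound, and the verification that $\hat{W}_{b,\ln}$ has the stated sign structure and the pointwise bound near $\xi=0$ --- are comparatively routine, though the support bound too must be obtained with constants that stay bounded as $b\uparrow2$.
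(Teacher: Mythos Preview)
Your plan is structurally the same as the paper's: establish a uniform upper bound $R_0$ on the support of minimizers for $b$ near $2$ (the paper's Theorem~\ref{thm_existablog}), reduce via Lemma~\ref{lem_LICR} and Theorem~\ref{thm_RLICab} to showing $\int\hat W_{b,\ln}|\hat\mu|^2>0$ for admissible $\mu$, and close by a logarithmic Poincar\'e-type inequality. The paper's version of the last step is Theorem~\ref{thm_powercomplog}: for $\mu$ supported in $\overline{B(0;R)}$ with two vanishing moments,
\[
\int|\xi|^{-\beta}\ln|\xi|\,|\hat\mu|^2\rd\xi\;+\;(C_\beta+\ln R)\int|\xi|^{-\beta}|\hat\mu|^2\rd\xi\;\ge\;0,
\]
with $\beta=d+b$. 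Since $\hat W_{b,\ln}=\tcpf(-b)|\xi|^{-d-b}+\cpf(-b)|\xi|^{-d-b}\ln|\xi|$ and $\cpf(-b)=O(2-b)$ while $\tcpf(-b)\to\cpf'(-2)>0$, this immediately yields $Q_b[\mu]>0$ once $b$ is close enough to $2$. Your frequency splitting at $\epsilon_b$ is a reformulation of the same idea, and your sign analysis of $\hat W_{b,\ln}$ is correct.

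There is, however, a genuine gap in your sketch of \eqref{eq:sk-poincare}. You bound the low-frequency side via $|\hat\mu(\xi)|\le C_{R_0}|\xi|^2\|\mu\|_{\mathrm{TV}}$ and then appeal to an ``uncertainty-type bound'' to the effect that the high-frequency side is bounded below by a multiple of $\|\mu\|_{\mathrm{TV}}^2$. That lower bound is false. For $\mu_n(\bx)=n^d\mu_0(n\bx)$ (support shrinking into $\overline{B(0;R_0)}$, moments preserved, $\|\mu_n\|_{\mathrm{TV}}=\|\mu_0\|_{\mathrm{TV}}$) one computes $\int_{|\xi|\ge\epsilon_b}\hat W_{b,\ln}|\hat\mu_n|^2=O(n^{-b}\ln n)\to0$, so no inequality of the form $\int_{|\xi|\ge\epsilon_b}\hat W_{b,\ln}|\hat\mu|^2\ge c\|\mu\|_{\mathrm{TV}}^2$ can hold. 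The total-variation norm is the wrong pivot.

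The paper's fix, in the proof of Theorem~\ref{thm_powercomplog}, is to mollify: set $\nu=\phi*\mu$, which is an $L^2$ function supported in $\overline{B(0;2)}$ (after rescaling to $R=1$). Then both sides are compared to $\|\nu\|_{L^2}^2$: the small-frequency piece via $|\hat\nu(\xi)|\le C\|\nu\|_{L^2}|\xi|^2$, and the large-frequency piece via $|\xi|^{-\beta}\ge c|\xi|^2|\hat\phi(\xi)|^2$ together with the Heisenberg uncertainty principle $\int|\xi|^2|\hat\nu|^2\ge c'\|\nu\|_{L^2}^2$ (which uses the compact support of $\nu$). Once you replace $\|\mu\|_{\mathrm{TV}}$ by $\|\phi*\mu\|_{L^2}$ and invoke Heisenberg, your outline goes through.
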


In the rest of this section we sketch the proof of our main results.

\subsection{The LIC radius and related concepts}
The main concept involved in the proof is the \emph{LIC radius}, which extends the LIC condition, stated as follows.
\begin{definition}
	Consider an interaction potential $W$ satisfying {\bf (W0)}. We define the \emph{LIC radius} of $W$ as a number in $[0,\infty]$ given by
	\begin{equation}\begin{split}
		R_{\textnormal{LIC}}[W] = & \sup\Big\{R\ge 0: \textnormal{ for any distinct compactly-supported $\rho_0,\rho_1\in\cM(\overline{B(0;R)})$} \\
		& \textnormal{such that $E[\rho_i]<\infty,\,i=0,1$ and $\int_{\mathbb{R}^d}\bx\rd{\rho_0(\bx)}=\int_{\mathbb{R}^d}\bx\rd{\rho_1(\bx)}=0$, the function} \\
		& \textnormal{$t\mapsto E [(1-t)\rho_0+t\rho_1]$ is strictly convex on $t\in [0,1]$}\Big\}\,.
	\end{split}\end{equation}
\end{definition}
It is clear that an LIC potential $W$ has $R_{\textnormal{LIC}}[W]=\infty$. {The condition $R_{\textnormal{LIC}}[W]=0$ can almost imply the concept of infinitesimal concavity \cite[Section 7]{CS21}, which implies that any superlevel set of any minimizer does not have an interior point. To be precise, if $R_{\textnormal{LIC}}[W]=0$, then for any $\epsilon>0$, one can find distinct $\rho_0,\rho_1\in\cM(\overline{B(0;\epsilon)})$, such that $E[\rho_i]<\infty,\,i=0,1$ and $\int_{\mathbb{R}^d}\bx\rd{\rho_0(\bx)}=\int_{\mathbb{R}^d}\bx\rd{\rho_1(\bx)}=0$, but $t\mapsto E [(1-t)\rho_0+t\rho_1]$ is not strictly convex on $t\in [0,1]$. If one also has $E[\rho_0+\rho_1]<\infty$, then by calculating $\frac{\rd^2}{\rd{t}^2}E [(1-t)\rho_0+t\rho_1] = 2E[\rho_1-\rho_0]$, we obtain a signed measure $\mu=\rho_1-\rho_0$ satisfying $\supp\mu\subset \overline{B(0;\epsilon)}$, $\int_{\mathbb{R}^d} \rd{\mu}=\int_{\mathbb{R}^d} \bx\rd{\mu}(\bx)=0$, $E[|\mu|]<\infty$ and $E[\mu] \le 0$. Suppose one slightly strengthens this statement by replacing $E[\mu] \le 0$ with $E[\mu]<0$, then the same is true for a slight mollification of $\mu$ (by Lemma 3.2, under some mild assumptions on $W$), and then one can deduce infinitesimal concavity.}

If a potential $W$ satisfies $0<R_{\textnormal{LIC}}[W]<\infty$, then one can use linear interpolation arguments as if $W$ is LIC, as long as the probability measures involved in the argument are supported inside $B(0;R_{\textnormal{LIC}}[W])$.

It is known from \cite{CCP15} that under mild assumptions on $W$ (see Theorem 1.4 therein), any minimizer is compactly supported, and for a given $W$, one can get an upper bound of the size of any minimizer explicitly. We therefore formulate the following definition.
\begin{definition}
	Consider an interaction potential $W$ satisfying the assumptions of Theorem \ref{thm_exist2}. We say $R_*\in (0,\infty)$ is an \emph{upper bound of minimizer size} for $W$, if the following holds:
	\begin{itemize}
		\item Any minimizer for $W$ is compactly supported.
		\item Any minimizer $\rho$ for $W$ with $\int_{\mathbb{R}^d}\bx\rd{\rho(\bx)}=0$ satisfies $\supp\rho\subset \overline{B(0;R_*)}$.
	\end{itemize}
\end{definition}
Suppose an upper bound of minimizer size for $W$ is known, say $R_*$, then one does not necessarily need the LIC property, i.e., $R_{\textnormal{LIC}}[W]=\infty$, to guarantee the uniqueness of minimizer. One only needs a weaker condition $R_{\textnormal{LIC}}[W]> R_*$, due to the following lemma whose proof is straightforward.
\begin{lemma}\label{lem_LICR}
	Assume $W$ satisfies the assumptions of Theorem \ref{thm_exist2}. If $R_*$ is an upper bound of minimizer size for $W$ and $R_{\textnormal{LIC}}[W]> R_*$, then the energy minimizer for $W$ is unique up to translation.
\end{lemma}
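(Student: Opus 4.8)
The plan is to argue by contradiction using the convexity of the energy along linear interpolations, exactly as in the classical LIC argument, but being careful that the interpolant stays inside the ball where strict convexity is guaranteed. First I would invoke the hypothesis that $R_*$ is an upper bound of minimizer size: since any minimizer is compactly supported, existence of a minimizer is not in question (it is furnished by Theorem \ref{thm_exist2}, whose hypotheses we are assuming), and after a translation we may assume any given minimizer $\rho$ has center of mass at the origin, hence $\supp\rho\subset\overline{B(0;R_*)}$. Suppose now that $\rho_0$ and $\rho_1$ are two minimizers, both normalized so that $\int_{\mathbb{R}^d}\bx\rd\rho_i(\bx)=0$; then both are supported in $\overline{B(0;R_*)}$, and $E[\rho_i]=\inf E<\infty$ because condition \eqref{thm_exist2_1} holds.

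Next I would consider the linear interpolant $\rho_t=(1-t)\rho_0+t\rho_1$ for $t\in[0,1]$. Each $\rho_t$ is again a probability measure, supported in $\overline{B(0;R_*)}$, with $\int_{\mathbb{R}^d}\bx\rd\rho_t(\bx)=0$. Since $R_{\textnormal{LIC}}[W]>R_*$, we may pick $R$ with $R_*<R<R_{\textnormal{LIC}}[W]$ (or, if $R_{\textnormal{LIC}}[W]=\infty$, any $R>R_*$), so that all of $\rho_0,\rho_1$ and the interpolants lie in $\cM(\overline{B(0;R)})$. If $\rho_0\ne\rho_1$, the definition of the LIC radius then forces $t\mapsto E[\rho_t]$ to be \emph{strictly} convex on $[0,1]$. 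But $E[\rho_0]=E[\rho_1]=\inf E$, so strict convexity gives $E[\rho_{1/2}]<\inf E$, contradicting the fact that $\inf E$ is the infimum. Hence $\rho_0=\rho_1$, which is precisely uniqueness up to translation.

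I should double-check one technical point in the argument: that $E[\rho_t]$ is finite (indeed $\le\inf E<\infty$) for all $t\in[0,1]$, so that the phrase ``strictly convex on $[0,1]$'' is meaningful and the strict inequality at $t=1/2$ can be read off. This follows from bilinearity of $E$ in $\rho$ together with the elementary bound $E[\rho_t]\le (1-t)^2E[\rho_0]+t^2E[\rho_1]+2t(1-t)B[\rho_0,\rho_1]$, where $B$ is the symmetric bilinear form associated to $E$; finiteness of the cross term $B[\rho_0,\rho_1]$ is standard given {\bf (W0)} and the compact supports (one may also simply note that the definition of $R_{\textnormal{LIC}}$ already presupposes $E[\rho_i]<\infty$ and delivers a finite convex function). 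The only real ``obstacle'' is bookkeeping: ensuring the normalization (translating each minimizer to have zero center of mass) is compatible with comparing the two minimizers — but since $E$ is translation invariant, translating $\rho_1$ does not change its energy, so this is harmless and the contradiction goes through.
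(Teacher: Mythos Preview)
Your proposal is correct and follows essentially the same route as the paper's proof: translate two putative minimizers to have zero center of mass, use the definition of the LIC radius to get strict convexity of $t\mapsto E[(1-t)\rho_0+t\rho_1]$ on $[0,1]$, and derive a contradiction at $t=\tfrac12$. The extra bookkeeping you include (choosing an intermediate $R$ and checking finiteness of the cross term) is harmless and not strictly needed, since the definition of $R_{\textnormal{LIC}}[W]$ already applies directly to measures supported in $\overline{B(0;R_*)}$ with finite energy.
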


\begin{proof}
	By assumption, we have the existence of minimizers, that any minimizer is compactly supported, and that any minimizer $\rho$ with $\int_{\mathbb{R}^d}\bx\rd{\rho(\bx)}=0$ satisfies $\supp\rho\subset \overline{B(0;R_*)}$. Suppose there are two distinct minimizers $\rho_0,\rho_1$ which are not translations of each other, then we may assume $\int_{\mathbb{R}^d}\bx\rho_0(\bx)\rd{\bx}=\int_{\mathbb{R}^d}\bx\rho_1(\bx)\rd{\bx}=0$ without loss of generality, by translation. It follows that $\supp\rho_i\subset \overline{B(0;R_*)},\,i=0,1$. Then the assumption $R_{\textnormal{LIC}}[W]> R_*$ implies that $t\mapsto E [(1-t)\rho_0+t\rho_1]$ is strictly convex on $t\in [0,1]$. Since $\rho_0$ and $\rho_1$ are minimizer, we have $E[\rho_0]=E[\rho_1]$, and thus the probability measure $\frac{1}{2}\rho_0+\frac{1}{2}\rho_1$ has a smaller energy, contradicting the minimizing property of $\rho_0,\rho_1$.	
\end{proof}

We will  show in Section \ref{sec_LICRER} that the sufficiency of the Euler-Lagrange condition can also be extended if $R_{\textnormal{LIC}}[W]$ is sufficiently large.

\subsection{Estimating the LIC radius}

To obtain uniqueness of minimizer for potentials beyond the LIC class, we seek for those $W$ whose LIC radius is sufficiently large so that one can apply Lemma \ref{lem_LICR}. In particular, we fix $-d<b<2$ and view the power-law potential $W_{a,b}$ in \eqref{Wab} as a one-parameter family in $a$. Since we know that $R_{\textnormal{LIC}}[W_{a,b}] = \infty$ for $2\le a \le 4$, it is reasonable to expect that $R_{\textnormal{LIC}}[W_{a,b}]$ is large if $a$ is slightly smaller than 2 or slightly greater than 4 so that the uniqueness of minimizer can be proved.

The last statement is indeed true but the proof is not as easy as one might expect. In fact, one cannot simply proceed by a standard perturbative argument, because the functional $R_{\textnormal{LIC}}$ may not behave nicely along a one-parameter family of potentials in general (as discussed in the next subsection).

To analyze the LIC radius for $W_{a,b}$ with $-d<b<a<2$, we switch to the Fourier representation \eqref{EFou} of the energy, which is justified in Section \ref{sec_Fou} in a general framework. Then the problem is converted into an estimate between two Sobolev norms of a compactly supported signed measure, see Theorem \ref{thm_powercomp}, which we believe is interesting by itself. This estimate is proved via the aid of smoothing techniques and the Heisenberg uncertainty principle. This leads to the proof of Theorem \ref{thm_main1}, as well as a characterization of the LIC radius of $W_{a,b}$ in terms of the optimal constant in Theorem \ref{thm_powercomp} (see Remark \ref{rmk_sharp}). We also prove Theorem \ref{thm_powercomplog}, a logarithmic analogue of Theorem \ref{thm_powercomp}, which gives the proof of Theorem \ref{thm_main3}.

The LIC radius for $W_{a,b}$ with $-d<b<2$ and $a>4$ is more tricky because the energy from the attractive part $\frac{|\bx|^a}{a}$ does not admit a Fourier representation as in \eqref{EFou}. Therefore we switch to a different strategy. To prove that $R_{\textnormal{LIC}}[W_{a,b}]$ is at least $R$ for some fixed number $R$, one is allowed modifying $W_{a,b}$ arbitrarily at $|\bx|>2R$. When $a$ is sufficiently close to 4, we proved that a smooth truncation of the attractive part of $W_{a,b}$ at a large radius will make its Fourier transform positive, and then Theorem \ref{thm_main2} follows.

\subsection{Some discontinuous behavior of the LIC radius}

In this subsection we give a few examples in which the functional $R_{\textnormal{LIC}}$ does not always behave nicely along a one-parameter family of potentials. These examples show that one has to be extremely careful when using the concept of the LIC radius.

One example is to consider the power-law potentials $W_{a,b}$ in \eqref{Wab} with fixed $2<a<4$ and $b$ being the parameter. When $-d<b\le 2$, one has $R_{\textnormal{LIC}}[W_{a,b}]=\infty$. However, for any $2<b<a$, one has $R_{\textnormal{LIC}}[W_{a,b}]=0$. In $d=1$, this can be seen by taking $\mu = \delta_\epsilon - 2\delta_0 + \delta_{-\epsilon}$ (c.f. {the signed measure $\mu$ in \eqref{thm_RLICab_1} of} Theorem \ref{thm_RLICab}) and calculating
\begin{equation}
	E_{a,b}[\mu] = -\frac{1}{b}\big((2\epsilon)^b-4\epsilon^b\big) + \frac{1}{a}\big((2\epsilon)^a-4\epsilon^a\big) = -\frac{2^b-4}{b}\epsilon^b + \frac{2^a-4}{a}\epsilon^a < 0\,,
\end{equation}
for any sufficiently small $\epsilon>0$. Higher dimensional cases can be proved similarly. This is indeed a reminiscent of \cite{CFP17} which proved that any minimizer for $W_{a,b}$ with $2<b<a$ is a finite linear combination of Dirac masses.

Another example is $W_{a,b}$ with $b=2$ and $a>2$ being the parameter. When $2<a<4$, one has $R_{\textnormal{LIC}}[W_{a,b}]=\infty$. However, for $a\ge 4$, one has $R_{\textnormal{LIC}}[W_{a,b}]=0$. In $d=1$, this can be seen by taking $\mu = \delta_0 - 3\delta_\epsilon + 3\delta_{2\epsilon}-\delta_{3\epsilon}$ and calculating
\begin{equation}
	E_{a,2}[\mu] = \frac{\epsilon^a}{a}(-15 + 6\cdot 2^a - 3^a)\,.
\end{equation}
The last quantity is zero when $a=4$, and it is not hard to show that it is negative for any $a>4$. Higher dimensional cases can be proved similarly.

One last example is the anisotropic interaction potential $W_\alpha = \frac{|\bx|^{-s}}{s}(1+\alpha\Omega(\frac{\bx}{|\bx|})) + \frac{|\bx|^2}{2}$  in dimension $d=2$, where $0\le s < 1$ and $\Omega$ is a smooth angular profile satisfying $\min \Omega = 0$. It is known that there exists a critical value $\alpha_*>0$, such that $R_{\textnormal{LIC}}[W_\alpha]=\infty$ for $0\le \alpha \le \alpha_*$ and { infinitesimally concave for $\alpha > \alpha_*$, see \cite[Proposition 2.5]{CS_2D}. One also has  $R_{\textnormal{LIC}}[W_\alpha]=0$ for $\alpha > \alpha_*$. To see this, one needs to modify the proof of \cite[Proposition 2.5]{CS_2D} in the following way: Notice that the signed measure $\mu_1$ therein has vanishing moments of all order, since $0\notin \supp\hat{\mu}_1$. When we use $\mu_1$ to construct the compactly-supported signed measure $\mu\in L^\infty$ satisfying $\int_{\mathbb{R}^d}\rd{\mu}$ , we subtract one more term to enforce the extra condition $\int_{\mathbb{R}^d} \bx\rd{\mu}(\bx)=0$. Then one can proceed similarly to prove $E[\mu]<0$, using the condition $\int_{\mathbb{R}^d}\bx\rd{\mu_1(\bx)}=0$ which gives the smallness of $\int_{B(0;\epsilon)}\bx\rd{\mu_1(\bx)}$. Finally, $R_{\textnormal{LIC}}[W_\alpha]=0$ is a direct consequence of the construction of $\mu$, by taking the probability measures $\rho_0=\frac{\chi_{B(0;\epsilon)}}{|B(0;\epsilon)|}$ and $\rho_1=\rho_0+\theta \mu$ for sufficiently small $\theta>0$, and calculating $\frac{\rd^2}{\rd{t}^2}E [(1-t)\rho_0+t\rho_1] = 2\theta^2 E[\mu]<0$.}

As a consequence, if $0\le \alpha \le \alpha_*$, then the minimizer is unique up to translation, which was derived as a continuous function supported on some ellipse or its degenerate analog. If $\alpha$ is slightly larger than $\alpha_*$, then numerical simulation showed that a minimizer may consist of many line segments that roughly arranged inside an ellipse.

\subsection{Organization of this paper}

The rest of this paper is organized as follows: in Section \ref{sec_prep} we recall the Euler-Lagrange condition for minimizers, extend its sufficiency to certain cases with finite LIC radius, and apply it to get a uniform upper bound of minimizer size for $W_{a,b}$. In Section \ref{sec_Fou} we give a general framework for justifying the Fourier representation \eqref{EFou} for the interaction energy, which we will apply to power-law and other potentials. In particular, in Section \ref{sec_proof2ln} we use the Fourier representation to prove the LIC property of $W_{2,\ln}$, and give the explicit formula of its minimizer by verifying the Euler-Lagrange condition. In Section \ref{sec_poin} we prove the Poincar\'e-type inequality, Theorem \ref{thm_powercomp}, and its logarithmic analogue, Theorem \ref{thm_powercomplog}. Using these inequality, we prove Theorems \ref{thm_main1} and \ref{thm_main3} in Section \ref{sec_proof1}. Finally we prove Theorem \ref{thm_main2} in Section \ref{sec_proof2}.

Within this paper, a \emph{mollifier} $\phi$ refers to a smooth radially-decreasing nonnegative function supported on $\overline{B(0;1)}$ with $\int_{\mathbb{R}^d}\phi\rd{\bx}=1$, with the scaling notation $\phi_\epsilon(\bx) = \frac{1}{\epsilon^d}\phi(\frac{\bx}{\epsilon})$. A \emph{truncation function} $\Phi$ refers to a smooth radially-decreasing nonnegative function supported on $\overline{B(0;2)}$ with $\Phi=1$ on $B(0;1)$, with the scaling notation $\Phi_R(\bx) = \Phi(\frac{\bx}{R})$.

\section{Preparations}\label{sec_prep}

\subsection{The Euler-Lagrange condition}

We first recall the Euler-Lagrange condition for energy minimizers and its relation with the LIC property. We introduce a technical condition on $W$:
\begin{equation}\begin{split}
		\textnormal{{\bf (W1)}:} & \textnormal{ $W$ is continuous on $\mathbb{R}^d\backslash \{0\}$, and $W(0)=\lim_{\bx\rightarrow0}W(\bx)\in \mathbb{R}\cup\{\infty\}$; }\\
		& \textnormal{for any $R>0$, }\frac{1}{|B(\bx;\epsilon)|}\int_{B(\bx;\epsilon)}W(\by)\rd{\by} \le C_1(R) + C_2(R) W(\bx),\quad \forall |\bx|<R,\, 0<\epsilon<1\,,
\end{split}\end{equation}
which is analogous to \cite[Assumption {\bf (H-s)}]{CS21}.

\begin{remark}\label{rmk_W1}
	If $W$ has a singularity at 0, the second line in the condition {\bf (W1)} requires that the singularity is more or less regulated. For example, it is easy to verify that the power-law potential $-\frac{|\bx|^b}{b},\,-d<b\le 0$ or its logarithmic counterpart $\frac{|\bx|^b}{b}\ln|\bx|,\,-d<b< 0$ satisfies {\bf (W1)}. More generally, any potential $W$ which is continuous on $\mathbb{R}^d\backslash \{0\}$ with $-\frac{|\bx|^b}{b}\lesssim W(\bx)\lesssim -\frac{|\bx|^b}{b},\,-d<b\le 0$ near 0 (or its logarithmic counterpart) satisfies {\bf (W1)}.
\end{remark}

\begin{theorem}[\cite{BCLR13_1,CS21}, Euler-Lagrange condition for minimizer]\label{thm_EL}
	Under the same assumptions as Theorem \ref{thm_exist2}, let $\rho$ be a minimizer of $E$. Then there exists a constant $C_0$ such that
	\begin{equation}\label{thm_EL_1}
		W*\rho = C_0,\quad \rho\textnormal{-a.e. }\,,
	\end{equation}
	and
	\begin{equation}\label{thm_EL_2}
		W*\rho \ge C_0,\quad \textnormal{a.e. }\mathbb{R}^d\,.
	\end{equation}
	$C_0$ is given by $2E[\rho]$. There also holds
	\begin{equation}\label{thm_EL_3}
		W*\rho \le C_0,\quad \textnormal{on }\supp\rho\,.
	\end{equation}

	Further assume that $W$ is LIC and {\bf (W1)} holds. Then any $\rho\in\cM(\mathbb{R}^d)$ satisfying \eqref{thm_EL_1}\eqref{thm_EL_2} for some $C_0$ is the unique minimizer for $W$ up to translation.
\end{theorem}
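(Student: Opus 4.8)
The plan is to prove the two assertions separately. The necessity of \eqref{thm_EL_1}--\eqref{thm_EL_3} for a given minimizer $\rho$ is a first-variation argument using only {\bf (W0)} and the hypotheses of Theorem \ref{thm_exist2}; the sufficiency, under the additional LIC and {\bf (W1)} assumptions, then follows from a linear-interpolation convexity argument against the minimizer.

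\emph{Necessity}. Since $\inf E<\tfrac12 W_\infty$, any minimizer $\rho$ has $E[\rho]<\infty$, and for a competitor probability measure $\nu$ of finite energy the interpolant $\rho_t=(1-t)\rho+t\nu$ satisfies the quadratic identity
\begin{equation*}
	E[\rho_t]=(1-t)^2E[\rho]+t(1-t)\,L(\rho,\nu)+t^2E[\nu],\qquad L(\rho,\nu):=\int_{\mathbb{R}^d}(W*\rho)\rd\nu,
\end{equation*}
valid once $L(\rho,\nu)<\infty$ (which I ensure by taking $\nu$ with bounded compactly supported density, or of the form $\rho|_A/\rho(A)$). Minimality of $\rho$ forces the right derivative at $t=0$ to be $\ge0$, i.e.\ $L(\rho,\nu)\ge 2E[\rho]=:C_0$ for every such $\nu$. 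Testing with $\nu=\rho|_A/\rho(A)$ over Borel $A$ with $\rho(A)>0$ (these have finite energy because $W$ is bounded below) gives $\int_A(W*\rho)\rd\rho\ge C_0\rho(A)$ for all such $A$; since $\int(W*\rho)\rd\rho=2E[\rho]=C_0$, this forces equality for every $A$, whence $W*\rho=C_0$ $\rho$-a.e.\ with $C_0=2E[\rho]$, which is \eqref{thm_EL_1}. Testing with $\nu$ equal to normalized Lebesgue measure on $B(\bx_0;\epsilon)$ gives $\frac{1}{|B(\bx_0;\epsilon)|}\int_{B(\bx_0;\epsilon)}W*\rho\,\rd\by\ge C_0$, and letting $\epsilon\to0$ (Lebesgue differentiation, noting $(W*\rho)_-$ is bounded) yields \eqref{thm_EL_2}. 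Finally $W*\rho$ is lower semicontinuous by Fatou's lemma (as $W$ is l.s.c.\ and bounded below); every ball around $\bx_0\in\supp\rho$ carries positive $\rho$-mass and hence meets $\{W*\rho=C_0\}$, so choosing $\bx_n\to\bx_0$ in that set gives $W*\rho(\bx_0)\le\liminf_n W*\rho(\bx_n)=C_0$, which is \eqref{thm_EL_3}.

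\emph{Sufficiency}. Assume {\bf (W1)} and that $W$ is LIC, and let $\rho$ satisfy \eqref{thm_EL_1}--\eqref{thm_EL_2} with constant $C_0$, so that $E[\rho]=\tfrac12\int(W*\rho)\rd\rho=\tfrac12 C_0<\infty$. I would first check that $\rho$ is compactly supported: Fatou gives $\liminf_{|\bx|\to\infty}W*\rho(\bx)\ge W_\infty$, so when $W_\infty>C_0$ (in particular when $W_\infty=\infty$, as for the power laws with $a>0$) one has $W*\rho>C_0$ off a large ball, and since $W*\rho=C_0$ $\rho$-a.e.\ the measure $\rho$ puts no mass there. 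Now let $\sigma$ be any compactly supported probability measure with $E[\sigma]<\infty$ that is not a translate of $\rho$; by translation invariance of $E$ we may assume $\sigma$ and $\rho$ have the same barycenter, whence $\sigma\ne\rho$. Here {\bf (W1)} enters exactly as in \cite{CS21}: it ensures that $W*\rho$ is regular enough that the Euler--Lagrange lower bound survives integration against the possibly singular $\sigma$, i.e.\ $\int(W*\rho)\rd\sigma\ge C_0$, and that the cross term $L(\rho,\sigma)=\int(W*\rho)\rd\sigma$ is finite, so that $\phi(t):=E[(1-t)\rho+t\sigma]$ is a genuine quadratic polynomial in $t$. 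Its linear coefficient $L(\rho,\sigma)-2E[\rho]=\int(W*\rho)\rd\sigma-C_0$ is then $\ge0$, and its leading coefficient $E[\rho]+E[\sigma]-L(\rho,\sigma)$ is strictly positive because $W$ is LIC and $\rho\ne\sigma$ have equal barycenter (so $\phi$ is strictly convex on $[0,1]$). Hence $E[\sigma]-E[\rho]=\phi(1)-\phi(0)$ is the sum of a nonnegative and a strictly positive number, so $E[\sigma]>E[\rho]$. Together with the existence of a minimizer (Theorem \ref{thm_exist2}, since $\inf E\le E[\rho]<\tfrac12 W_\infty$) and the compact support of any minimizer \cite{CCP15}, this shows $\rho$ is the unique minimizer up to translation.

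I expect the genuine difficulties to lie entirely in the sufficiency part and to be technical rather than conceptual: (i) verifying that $\rho$ itself has compact support, so that the LIC hypothesis --- stated only for compactly supported measures --- applies, which requires care about the behaviour of $W*\rho$ at infinity and about the borderline case $W_\infty=C_0$; and (ii) the role of {\bf (W1)} in keeping the interpolation argument honest when $W$ has an integrable singularity at the origin, namely the finiteness of the cross energy $L(\rho,\sigma)$ and the stability of \eqref{thm_EL_2} against singular competitors, which would be handled along the lines of \cite{CS21}. The necessity direction is a routine first-variation computation.
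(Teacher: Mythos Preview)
The paper does not supply its own proof of this theorem; the paragraph following the statement simply cites \cite{BCLR13_1} for necessity and \cite[Theorem~2.4]{CS21} for sufficiency. Your necessity argument is the standard first-variation computation and matches those references; it is complete as written.

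For sufficiency your overall structure is correct, but the route actually taken in \cite{CS21} (and echoed in the paper's proof of Lemma~\ref{lem_LICREL1}) is organized slightly differently, and the difference is exactly what resolves the two issues you flag. Rather than comparing $\rho$ against an \emph{arbitrary} competitor $\sigma$, one compares it only against a \emph{mollified} version $\rho_\infty*\phi_\epsilon$ of the minimizer $\rho_\infty$ (which exists and is compactly supported). Because $\rho_\infty*\phi_\epsilon$ is bounded with compact support, the a.e.\ inequality \eqref{thm_EL_2} integrates against it directly to give $L(\rho,\rho_\infty*\phi_\epsilon)\ge C_0$, and all cross energies are finite; LIC then gives $E[\rho_\infty*\phi_\epsilon]>E[\rho]$. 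The role of {\bf (W1)} is not to upgrade \eqref{thm_EL_2} to hold against singular measures, as you suggest, but to pass to the limit $\epsilon\to0$ via a Lemma~\ref{lem_W1}-type convergence, yielding $E[\rho_\infty]\ge E[\rho]$ and hence that $\rho$ is a minimizer. So your difficulty (ii) is handled by mollifying the \emph{competitor}, not by regularizing $W*\rho$. Your difficulty (i), the compact support of $\rho$, is a genuine technical point; note that the paper's own Lemma~\ref{lem_LICREL1} simply \emph{assumes} $\rho$ is supported in a fixed ball, and the general case needs either a restriction of this kind or a separate argument showing $2E[\rho]<W_\infty$ before invoking LIC.
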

Here $\rho\textnormal{-a.e.}$ means that the failure set for the condition has $\rho$ measure zero. 

The necessity of the Euler-Lagrange condition \eqref{thm_EL_1}\eqref{thm_EL_2}\eqref{thm_EL_3} is given by \cite[Theorem 4, Proposition 1]{BCLR13_1} on $\mathbb{R}^d$. We also mention that an earlier version appeared in \cite[Theorem 1.3]{saffbook} for a special interaction potential, but its proof indeed works for general potentials.

The sufficiency of the Euler-Lagrange condition for general LIC and {\bf (W1)} potentials is given by \cite[Theorem 2.4]{CS21}. An earlier version can be found in \cite[Theorem 1.2]{MRS} for a special interaction potential.

\subsection{LIC radius and the Euler-Lagrange condition}\label{sec_LICRER}

The sufficiency of the Euler-Lagrange condition still holds as long as $R_{\textnormal{LIC}}[W]$ is greater than an upper bound of minimizer size.

\begin{lemma}\label{lem_LICREL1}
	Under the same assumptions as Lemma \ref{lem_LICR}, assume $\rho\in \cM(\overline{B(0;R_*)})$ satisfies $\int_{\mathbb{R}^d}\bx\rd{\rho(\bx)}=0$ and
\begin{equation}\label{thm_EL_1Rst}
		W*\rho = C_0,\quad \rho\textnormal{-a.e. }\,,
	\end{equation}
	and
	\begin{equation}\label{thm_EL_2Rst}
		W*\rho \ge C_0,\quad \textnormal{a.e. }B(0;R_*+\epsilon)\,,
	\end{equation}
	for some $C_0\in\mathbb{R}$ and $\epsilon>0$. Then $\rho$ is the unique minimizer of $E$ up to translation.
\end{lemma}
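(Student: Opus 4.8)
The plan is to mimic the standard sufficiency argument for the Euler--Lagrange condition (as in \cite[Theorem 2.4]{CS21}), but replacing the global LIC hypothesis with the local version encoded in $R_{\textnormal{LIC}}[W] > R_*$, and being careful that all the probability measures that appear in the linear-interpolation step are supported in $\overline{B(0;R_*)}$. First I would invoke Lemma \ref{lem_LICR}: under the stated assumptions the minimizer of $E$ exists and is unique up to translation, and any minimizer $\sigma$ with $\int \bx \rd\sigma(\bx)=0$ satisfies $\supp\sigma \subset \overline{B(0;R_*)}$. So it suffices to show that the given $\rho$ \emph{is} a minimizer; uniqueness then comes for free from Lemma \ref{lem_LICR}.

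To show $\rho$ is a minimizer, let $\sigma$ be the (unique, centered) minimizer, so $\supp\sigma\subset\overline{B(0;R_*)}$ and $\supp\rho\subset\overline{B(0;R_*)}$ by hypothesis, and both have zero first moment. Set $\rho_t = (1-t)\rho + t\sigma$; each $\rho_t$ is a probability measure supported in $\overline{B(0;R_*)}$ with zero first moment, and $E[\rho_t]<\infty$ since $E[\rho],E[\sigma]<\infty$ (the latter because $\sigma$ is a minimizer and a competitor of finite energy exists). The key computation is the derivative of $t\mapsto E[\rho_t]$ at $t=0^+$: expanding the quadratic form,
\begin{equation}
	E[\rho_t] = E[\rho] + t\Big(\int W*\rho \rd(\sigma-\rho) \Big) + t^2 E[\sigma-\rho]\,,
\end{equation}
so $\frac{\rd}{\rd t}\big|_{t=0^+} E[\rho_t] = \int (W*\rho)\rd\sigma - \int (W*\rho)\rd\rho$. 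Using \eqref{thm_EL_1Rst} we have $\int (W*\rho)\rd\rho = C_0$, and using \eqref{thm_EL_2Rst} together with $\supp\sigma\subset\overline{B(0;R_*)}\subset B(0;R_*+\epsilon)$ we get $\int (W*\rho)\rd\sigma \ge C_0$; hence the one-sided derivative at $0$ is nonnegative. (Here I would need the elementary fact that $W*\rho$ is well-defined a.e.\ and that these integrals make sense — this is where {\bf (W1)} and the finiteness of $E[\rho]$ enter, exactly as in the cited references.)

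Now apply the LIC-radius hypothesis: if $\rho \ne \sigma$, then since both are distinct compactly supported measures in $\cM(\overline{B(0;R_*)})$ of finite energy with equal (zero) first moments and $R_{\textnormal{LIC}}[W] > R_*$, the function $t\mapsto E[\rho_t]$ is \emph{strictly} convex on $[0,1]$. Combined with the nonnegative right-derivative at $t=0$, strict convexity forces $E[\rho_1] = E[\sigma] > E[\rho_0] = E[\rho]$, contradicting that $\sigma$ is a minimizer. Therefore $\rho = \sigma$, so $\rho$ is the unique minimizer up to translation. The main obstacle is the justification that the expansion of $E[\rho_t]$ is valid and that $\int (W*\rho)\rd\rho$ and $\int(W*\rho)\rd\sigma$ are finite and behave as claimed — i.e.\ that one may legitimately differentiate under the integral at the endpoint $t=0$; this is a standard but slightly delicate point that the finiteness of $E[\rho]$, the support confinement to $\overline{B(0;R_*)}$, and condition {\bf (W1)} are designed to handle, and I would cite the corresponding argument in \cite{BCLR13_1,CS21} rather than redo it. A secondary subtlety is making sure the strict convexity from the definition of $R_{\textnormal{LIC}}$ is applicable: this requires checking $E[\rho_i]<\infty$ for $i=0,1$ and that the first moments vanish, both of which hold by construction.
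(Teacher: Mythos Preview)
Your approach is essentially the same as the paper's: invoke Lemma \ref{lem_LICR} to obtain the unique centered minimizer $\sigma$ (the paper calls it $\rho_\infty$), interpolate linearly between $\rho$ and $\sigma$, use strict convexity from the LIC-radius hypothesis, and use the Euler--Lagrange conditions to control the first-order term. However, there is one gap you should address more carefully. The step ``using \eqref{thm_EL_2Rst} together with $\supp\sigma\subset\overline{B(0;R_*)}$ we get $\int (W*\rho)\rd\sigma \ge C_0$'' is not immediate: condition \eqref{thm_EL_2Rst} is only an inequality \emph{Lebesgue-a.e.}, but the minimizer $\sigma$ may well be singular with respect to Lebesgue measure (e.g., a spherical shell or a finite sum of Dirac masses), in which case integrating an a.e.\ inequality against $\sigma$ says nothing. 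The paper handles this by replacing $\sigma$ with a slight mollification $\sigma*\phi_\delta$, which (i) is absolutely continuous so the a.e.\ inequality applies, (ii) has support inside $B(0;R_*+\epsilon)$ so \eqref{thm_EL_2Rst} is still effective---this is precisely why the $\epsilon>0$ margin appears in the hypothesis---and (iii) still lies inside $\overline{B(0;R_{\textnormal{LIC}}[W])}$ so strict convexity of the interpolation is preserved.

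Your deferral to \cite{CS21} for the technicalities is reasonable, since this mollification trick is indeed part of that argument; but as written, your proposal misidentifies the ``main obstacle'' as finiteness and differentiability of the expansion, when the genuinely delicate point is the a.e.-versus-$\sigma$-a.e.\ mismatch. In particular, your argument as stated never uses the $\epsilon>0$ in any essential way, which is a signal that something is missing.
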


\begin{proof}
	By Lemma \ref{lem_LICR}, the minimizer of $E$ is unique up to translation. Since $R_*$ is an upper bound of minimizer size, we see that there is a unique minimizer $\rho_\infty$ of $E$ satisfying  $\int_{\mathbb{R}^d}\bx\rd{\rho_\infty(\bx)}=0$, and we have $\supp\rho_\infty\subset \overline{B(0;R_*)}$.
		
	If we assume the contrary that $\rho$ is not a minimizer, then $E[\rho]>E[\rho_\infty]$. Since $R_{\textnormal{LIC}}[W]> R_*$, we see that $E$ is strictly convex along the linear interpolation between $\rho$ and $\rho_\infty$, and the same is true if $\rho_\infty$ is slightly mollified. Then one can proceed similarly as the proof of \cite[Theorem 2.4]{CS21} to get a contradiction (where one notices that \eqref{thm_EL_2Rst} is effective in the support of a slightly mollified version of $\rho_\infty$).
\end{proof}

If the condition $R_{\textnormal{LIC}}[W]> R_*$ is not known, then one can still derive Wasserstein-infinity local minimizers from the Euler-Lagrange condition, as long as one works within the radius $R_{\textnormal{LIC}}[W]$.

\begin{lemma}
	Under the same assumptions as Theorem \ref{thm_exist2}, assume {\bf (W1)} and $R:=R_{\textnormal{LIC}}[W] > 0$. Assume $\rho\in \cM(\mathbb{R}^d)$ satisfies $\supp\rho\subset\overline{B(0;R_0)}$ for some $R_0<R$, $\int_{\mathbb{R}^d}\bx\rd{\rho(\bx)}=0$ and
	\begin{equation}\label{thm_EL_1R}
		W*\rho = C_0,\quad \rho\textnormal{-a.e. }\,,
	\end{equation}
	and
	\begin{equation}\label{thm_EL_2R}
		W*\rho \ge C_0,\quad \textnormal{a.e. }B(0;R_0+\epsilon)\,,
	\end{equation}
	for some $C_0\in\mathbb{R}$ and $0<\epsilon<R-R_0$. Then $\rho$ is a Wasserstein-infinity local minimizer of $E$, in the sense that $E[\rho_1]\ge E[\rho]$ for any $\rho_1\ne \rho$ satisfying $d_\infty(\rho,\rho_1)<\frac{1}{3}\epsilon$ (where $d_\infty$ denotes the Wasserstein-infinity distance).
\end{lemma}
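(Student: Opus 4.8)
The plan is to imitate the proof of the sufficiency part of \cite[Theorem 2.4]{CS21}, but localized to the ball $\overline{B(0;R)}$ where $R=R_{\textnormal{LIC}}[W]$. The key observation is that for a Wasserstein-infinity competitor $\rho_1$ with $d_\infty(\rho,\rho_1)<\frac13\epsilon$, both $\rho$ and $\rho_1$ are supported in $\overline{B(0;R_0+\frac13\epsilon)}$, which is strictly inside $B(0;R)$ by the assumption $\epsilon<R-R_0$. Thus the linear interpolation $\rho_t=(1-t)\rho+t\rho_1$ stays inside $\overline{B(0;R_0+\frac13\epsilon)}\subset B(0;R)$ for all $t\in[0,1]$, and the difference $\mu:=\rho_1-\rho$ is a compactly supported signed measure with zero total mass and, after the usual reduction, zero first moment.

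First I would reduce to the case $\int_{\mathbb{R}^d}\bx\rd\rho_1(\bx)=0$: if not, one translates $\rho_1$ by its barycenter (which has size $O(\epsilon)$ since $d_\infty(\rho,\rho_1)<\frac13\epsilon$ and $\rho$ has barycenter $0$); translation does not change $E$, and the translated measure is still supported in $B(0;R)$ after possibly shrinking a fixed margin, so one may assume $\mu$ has zero first moment. Second, I would mollify: replace $\rho$ by $\rho^\delta=\rho*\phi_\delta$ for small $\delta>0$, which is still supported in $B(0;R)$ and, by {\bf (W1)}, has finite energy; the Euler-Lagrange inequality \eqref{thm_EL_2R} passes to $W*\rho^\delta\ge C_0^\delta$ on a slightly smaller ball (and the equality \eqref{thm_EL_1R} becomes an approximate equality on $\supp\rho^\delta$), exactly as in \cite{CS21}. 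Third, I would write the second derivative in $t$ of $E[\rho_t]$ in terms of $\mu$ and use that $R_{\textnormal{LIC}}[W]=R$ forces this second derivative to be strictly positive whenever $\mu\ne0$; combined with the first-order term, which is nonnegative because of \eqref{thm_EL_1R}-\eqref{thm_EL_2R} evaluated against $\mu=\rho_1-\rho$ (here one uses that $\rho_1$ is a probability measure supported where $W*\rho\ge C_0$, while $W*\rho=C_0$ $\rho$-a.e.), one concludes
\[
E[\rho_1]-E[\rho]=\Big(\tfrac{\rd}{\rd t}\Big|_{t=0}E[\rho_t]\Big)+\int_0^1(1-t)\,\tfrac{\rd^2}{\rd t^2}E[\rho_t]\rd t\ \ge\ 0,
\]
with strict inequality when $\rho_1\ne\rho$. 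Letting $\delta\to0$ removes the mollification.

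The main obstacle, and the reason the mollification step is needed at all, is that \eqref{thm_EL_2R} is only an a.e. statement on $B(0;R_0+\epsilon)$, so the first-order term $\int (W*\rho)\rd\mu$ cannot be estimated by naively splitting $\mu$ into its positive and negative parts — the negative part $\rho$ is singular and $W*\rho$ need not equal $C_0$ off a $\rho$-null set. The resolution, following \cite{CS21}, is that after mollifying $\rho$ the measure $\rho^\delta$ is absolutely continuous, \eqref{thm_EL_2R} applies at a.e. point of $\supp\rho^\delta$ once $\delta$ is small relative to $\epsilon$ (this is where $d_\infty(\rho,\rho_1)<\frac13\epsilon$ with the factor $\frac13$ buys the needed room: $\supp\rho^\delta$, $\supp\rho_1$, and the interpolants all sit in $B(0;R_0+\epsilon)$ with a uniform margin), and the energy of the mollified interpolation is genuinely $C^2$ in $t$ with the strict convexity supplied by the definition of $R_{\textnormal{LIC}}[W]$. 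A secondary technical point is checking that $E[\rho_1]<\infty$ may be assumed (otherwise there is nothing to prove) and that $E[\rho^\delta]\to E[\rho]$, both of which follow from {\bf (W1)} and local integrability of $W$ exactly as in the cited references.
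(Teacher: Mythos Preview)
Your proposal is correct and follows essentially the same route as the paper: translate the competitor to have zero center of mass (using the $d_\infty$ bound to control the shift and keep everything inside $\overline{B(0;R_0+\tfrac{2}{3}\epsilon)}\subset B(0;R)$), and then run the \cite[Theorem~2.4]{CS21}-style argument combining the Euler--Lagrange conditions with the strict convexity supplied by $R_{\textnormal{LIC}}[W]>R_0+\epsilon$. The paper packages the second step as ``$\rho$ is the unique minimizer in the restricted class $\{\rho'\in\cM(\overline{B(0;R_0+\tfrac{2}{3}\epsilon)}):\int\bx\rd\rho'=0\}$'' before applying it to the recentered $\rho_1$, and (via Lemma~\ref{lem_LICREL1}) mollifies the competitor rather than $\rho$, but these are cosmetic differences.
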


\begin{proof}
	First, using \eqref{thm_EL_1R} and \eqref{thm_EL_2R}, one can proceed as in the proof of Lemma \ref{lem_LICREL1} to prove that $\rho$ is the unique minimizer of $E$ within the class of probability measures $\{\rho'\in \cM(\overline{B(0;R_0+\frac{2}{3}\epsilon)}):\int_{\mathbb{R}^d}\bx\rd{\rho'(\bx)}=0\}$.
	
	Then let $\rho_1\ne \rho$ satisfy $d_\infty(\rho,\rho_1)<\frac{1}{3}\epsilon$. It implies that $\supp\rho_1\subset \overline{B(0;R_0+\frac{1}{3}\epsilon)}$ and $\bx_c:=\int_{\mathbb{R}^d}\bx\rd{\rho_1(\bx)}$ satisfies $|\bx_c|\le \frac{1}{3}\epsilon$. Therefore $\tilde{\rho}_1(\bx) := \rho_1(\bx+\bx_c)$ satisfies $E[\tilde{\rho}_1]=E[\rho_1]$, $\supp\tilde{\rho}_1\subset \overline{B(0;R_0+\frac{2}{3}\epsilon)}$ and $\int_{\mathbb{R}^d}\bx\rd{\tilde{\rho}_1(\bx)}=0$. Then it follows from the previous paragraph $E[\rho_1]=E[\tilde{\rho}_1]\ge E[\rho]$.
\end{proof}

Although not pursued in this paper, the above two lemmas allow one to obtain explicit formulas for global/local minimizers by verifying the Euler-Lagrange condition, as long as the LIC radius is sufficiently large. 

\subsection{Uniform estimate on the size of minimizers}

In this subsection we give uniform upper bounds of minimizer size for $W_{a,b}$ and $W_{b,\ln}$.

\begin{theorem}\label{thm_existab}
	Consider the power-law potentials $W_{a,b}$ in \eqref{Wab}. For fixed $b>-d$ and fixed $a_0>\max\{b,0\}$, there exists $R_*(b,a_0)$ that is an upper bound of minimizer size for any $W_{a,b}$ with $a\ge a_0$.
\end{theorem}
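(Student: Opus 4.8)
The plan is to obtain the uniform upper bound from the Euler--Lagrange condition of Theorem \ref{thm_EL}, combined with the explicit growth of $W_{a,b}$ at infinity. Fix $b>-d$ and $a_0>\max\{b,0\}$, and let $a\ge a_0$. By Theorem \ref{thm_EL}, any minimizer $\rho$ of $E_{a,b}$ satisfies $W_{a,b}*\rho = C_0$ $\rho$-a.e., and $C_0 = 2E_{a,b}[\rho]$. The key point is that on the support of $\rho$ (which we may center so that $\int \bx\rd{\rho(\bx)}=0$), the value $(W_{a,b}*\rho)(\bx)$ must equal the finite constant $C_0$, while the attractive part $-\frac{|\bx-\by|^b}{b}$ of $W_{a,b}$ is bounded below on bounded sets and the repulsive part $\frac{|\bx-\by|^a}{a}$ grows like $|\bx|^a$ as $|\bx|\to\infty$. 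So if $\rho$ has mass at a point $\bx$ with $|\bx|$ large, the $\frac{|\bx|^a}{a}$ growth forces $W_{a,b}*\rho(\bx)$ to be large, and this must be balanced, which caps $|\bx|$.

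First I would show $C_0$ is bounded above uniformly in $a\ge a_0$: take a fixed competitor $\rho$ (say a normalized indicator of a fixed ball, or a Barenblatt-type profile whose support is fixed independent of $a$), for which $E_{a,b}[\rho]$ stays bounded as $a$ ranges over $[a_0,\infty)$ because $\int\int \frac{|\bx-\by|^a}{a}\rd\rho\rd\rho$ is uniformly bounded when $\supp\rho$ is a fixed bounded set and $a\ge a_0$ (here one uses $a\ge 1$ eventually, or just that $|z|^a\le \max\{1,|z|^{a_0}\}\cdot(\text{const})$ on the relevant range and $1/a\le 1/a_0$). Hence $\inf E_{a,b}\le M(b,a_0)$ uniformly, so $C_0=2E_{a,b}[\rho_{\min}]\le 2M$. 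Second, I would use compact support of $\rho$ (guaranteed by \cite{CCP15} under the mild assumptions, which $W_{a,b}$ satisfies for $a>\max\{b,0\}$) together with $\int\bx\rd\rho=0$ and the first moment being controlled: writing $R_\rho$ for the radius of $\supp\rho$, pick $\bx_0\in\supp\rho$ with $|\bx_0|$ close to $R_\rho$. Then
\begin{equation}
C_0 \ge (W_{a,b}*\rho)(\bx_0) \ge \frac{1}{a}\int|\bx_0-\by|^a\rd{\rho(\by)} - \frac{1}{b}\int|\bx_0-\by|^b\rd{\rho(\by)}\,,
\end{equation}
(with the usual sign conventions; when $b<0$ or $b=0$ the attractive term is bounded below on bounded sets after using the lower bound from {\bf (W1)}-type control, so it only helps). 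By Jensen applied to $z\mapsto |\bx_0-z|^a$ (convex) and $\int\by\rd\rho=0$, the first integral is $\ge \frac{|\bx_0|^a}{a}$, while the attractive term is bounded below by a constant depending only on $b$, $a_0$ and a crude a-priori bound on $R_\rho$ — but to avoid circularity here I would instead bound the attractive term by $-\frac{1}{b}(2R_\rho)^{\max\{b,0\}}$ or similar and absorb it.

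The main obstacle, and the step requiring the most care, is closing the loop: the inequality above reads, schematically, $C_0 \gtrsim \frac{|\bx_0|^a}{a} - (\text{lower-order in }R_\rho)$, which bounds $R_\rho$ in terms of $C_0$ but the bound a priori degenerates as $a\to\infty$ because $\frac{1}{a}|\bx_0|^a$ is small when $|\bx_0|<1$. The fix is to separate cases: if $R_\rho\le 2$ we are already done; if $R_\rho>2$, then for $\bx_0$ near the boundary and a set of $\by$ of definite $\rho$-mass (which exists because $\int\by\rd\rho=0$ forces mass in the half-space $\{\by\cdot\hat{\bx}_0\le 0\}$, hence $|\bx_0-\by|\ge|\bx_0|\ge R_\rho/2 > 1$), the integrand $|\bx_0-\by|^a$ is $\ge (R_\rho/2)^a$, so $\frac{1}{a}\int|\bx_0-\by|^a\rd\rho \ge \frac{c}{a}(R_\rho/2)^a$ for a dimensional constant $c>0$; since $(R_\rho/2)^a\ge (R_\rho/2)^{a_0}$ once $R_\rho\ge 2$, and $\frac1a\le\frac1{a_0}$ cancels against a factor we can extract, one gets $C_0 \ge \frac{c}{a_0}(R_\rho/2)^{a_0} - C(b,a_0)R_\rho^{\max\{b,0\}}$ (say), valid uniformly in $a\ge a_0$; note here I also use that $R_\rho>2$ makes $(R_\rho/2)^a$ much larger than $R_\rho^{\max\{b,0\}}$ for $a$ large, so the subtracted term is dominated. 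Since $C_0\le 2M$ uniformly, this yields $R_\rho\le R_*(b,a_0)$ for an explicit $R_*$, which is exactly the claimed uniform upper bound of minimizer size. I would also remark that the hypothesis $a_0>\max\{b,0\}$ is used to ensure the potential $W_{a,b}$ is repulsive-dominated at infinity (so that \eqref{thm_exist2_1} holds and minimizers exist and are compactly supported), and that the centering $\int\bx\rd\rho=0$ is the normalization in the definition of upper bound of minimizer size.
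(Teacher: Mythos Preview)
Your approach is sound but genuinely different from the paper's. Both begin by bounding the minimal energy uniformly in $a\ge a_0$ via a fixed competitor (the paper uses $\chi_{B(0;1/2)}/|B(0;1/2)|$). After that, the paper follows the scheme of \cite{CCP15}: it first proves a \emph{concentration} estimate, namely that $\rho(B(\bx_0;R_1))\ge\tfrac12$ for every $\bx_0\in\supp\rho$ and a suitable $R_1=R_1(b,a_0)$, by showing via \eqref{thm_EL_3} that if too much mass lay outside $B(\bx_0;R_1)$ then $(W_{a,b}*\rho)(\bx_0)$ would exceed $2E_{a,b}[\rho]$; concentration is then used a second time to show $\supp\rho\subset\overline{B(\bx_0;2R_1)}$. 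The center of mass enters only at the very end, to convert a diameter bound into a radius bound about the origin. Your route is more direct: you pick an extremal support point and use the mean-zero condition itself (via Jensen or a half-space) to lower-bound the attractive integral at that point. The Jensen step is valid for $a\ge 1$, which already covers every application of the theorem in the paper; and your handling of the $1/a$ factor is, once made explicit, the same trick the paper uses: for $R_\rho\ge 2$, $\tfrac{R_\rho^a}{a}\ge\big(\inf_{a>0}\tfrac{2^{a/2}}{a}\big)\,R_\rho^{a_0/2}$, which is uniform in $a$.

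There is one concrete gap in the half-space variant. The claim that $\int\by\rd\rho=0$ forces \emph{definite} $\rho$-mass in $\{\by\cdot\hat{\bx}_0\le 0\}$ is false as stated. For instance, take $\rho=(1-\epsilon)\delta_{(\delta,0,\dots,0)}+\epsilon_1\delta_{(1,0,\dots,0)}+\epsilon_2\delta_{(-1,0,\dots,0)}$ with $\delta>0$ small and $\epsilon_1,\epsilon_2>0$ chosen so that $\epsilon_1+\epsilon_2=\epsilon$ and the center of mass is $0$; then $R_\rho=1$, and with $\bx_0=(1,0,\dots,0)$ the half-space $\{y_1\le 0\}$ carries mass $\epsilon_2\approx\epsilon/2$, which can be made arbitrarily small. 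The easy fix is to shift the hyperplane: set $H=\{\by\cdot\hat{\bx}_0\le |\bx_0|/2\}$. The mean-zero condition together with $\supp\rho\subset\overline{B(0;R_\rho)}$ and $|\bx_0|=R_\rho$ forces $\rho(H)\ge\tfrac13$ (since $0=\int\by\cdot\hat{\bx}_0\rd\rho\ge \tfrac{R_\rho}{2}\rho(H^c)-R_\rho\,\rho(H)$), and on $H$ one has $|\bx_0-\by|\ge|\bx_0|-\by\cdot\hat{\bx}_0\ge|\bx_0|/2$, which is exactly the lower bound you need. With this correction your argument goes through for all $a>0$, not just $a\ge 1$.
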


\begin{proof}
	$W_{a,b}$ with $-d<b<a$ satisfies the assumptions in \cite[Theorem 1.4]{CCP15}. Applying this theorem shows that there exists a minimizer of $E_{a,b}$, and any minimizer of $E_{a,b}$ is compactly supported. Then we proceed to estimate the size of a minimizer $\rho$ of $E_{a,b}$. We will only treat the case $b\ne 0$, because the case of $b=0$ (logarithmic repulsion) can be handled in a similarly way.
	
	First notice that 
	\begin{equation}\begin{split}
		E_{a,b}\Big[\frac{\chi_{B(0;1/2)}}{|B(0;1/2)|}\Big] = & \frac{1}{2a|B(0;1/2)|^2}\int_{B(0;1/2)} \int_{B(0;1/2)} |\bx-\by|^a \rd{\bx}\rd{\by} \\
		& -  \frac{1}{2b|B(0;1/2)|^2}\int_{B(0;1/2)} \int_{B(0;1/2)} |\bx-\by|^b \rd{\bx}\rd{\by} \\
		\le & {\frac{1}{2a} + C_1 \le \frac{1}{2a_0} + C_1 =: C_2\,,}
	\end{split}\end{equation}
	where $C_1$ denotes the integral term involving $b$, which  does not depends on $a$. As a consequence, $C_2$ depends on $b$ and $a_0$, independent of $a$, and 
	\begin{equation}
		E_{a,b}[\rho] \le E_{a,b}\Big[\frac{\chi_{B(0;1/2)}}{|B(0;1/2)|}\Big] \le C_2\,,
	\end{equation}
	since $\rho$ is a minimizer of $E_{a,b}$.
	
	Then we claim that there exists $r_*\ge 2$, depending on $b$ and $a_0$, such that
	\begin{equation}\label{claim_t}
		\min_{t\ge r} (\frac{t^a}{a}-\frac{t^b}{b}) \ge \frac{r^a}{2a},\quad \forall a\ge a_0,\,r\ge r_*\,.
	\end{equation}
	In fact, if $b<0$ then \eqref{claim_t} is trivial. If $b>0$, then for any $a\ge a_0$ and $t\ge r_*$ we estimate
	\begin{equation}
		\frac{t^b}{b}\cdot \frac{a}{t^a} = \frac{a}{b}\cdot t^{b-a} \le \frac{a}{b}\cdot r_*^{b-a} \le \frac{1}{b} \cdot 2^{(b-a)/2}a \cdot r_*^{(b-a_0)/2}\,.
	\end{equation}
	Notice that $\lim_{a\rightarrow\infty}2^{(b-a)/2}a = 0$, and thus \begin{equation}
		C_3:=\sup_{a\ge a_0}2^{(b-a)/2}a<\infty\,,
	\end{equation}
	is independent of $a$. Then we choose
	\begin{equation}
		r_* = \Big(\frac{b}{2C_3}\Big)^{2/(b-a_0)}\,,
	\end{equation}
	so that $\frac{t^b}{b}\cdot \frac{a}{t^a}\le \frac{1}{2}$ for any $a\ge a_0$ and $t\ge r_*$, and \eqref{claim_t} follows. 
		
	{
	Then, similar to \cite[Lemma 2.6]{CCP15}, we take 
	\begin{equation}\label{R1}
		R_1 = \max\left\{r_*,\Big(\frac{2}{C_4}\big(2C_2-\min\big\{-\frac{1}{b},0\big\}+1\big)\Big)^{2/a_0}\right\}\,,
	\end{equation}
	where
	\begin{equation}\label{C5}
		C_4 = \min_{a>0}\frac{2^{a/2}}{2a} > 0\,.
	\end{equation}
	Then $R_1$ is independent of $a$ and satisfies $R_1\ge r_* \ge 2$. We claim that
	\begin{equation}\label{claim_r}
		\rho(B(\bx_0;R_1)) \ge \frac{1}{2},\quad \forall \bx_0\in\supp\rho\,.
	\end{equation}
	Assume the contrary, i.e., $\rho(B(\bx_0;R_1)^c) > \frac{1}{2}$ for some $\bx_0\in\supp\rho$, and estimate
	\begin{equation}\begin{split}
		(W_{a,b}*\rho)(\bx_0) = & \int_{B(\bx_0;R_1)} W_{a,b}(\bx_0-\by)\rd{\rho(\by)} + \int_{B(\bx_0;R_1)^c} W_{a,b}(\bx_0-\by)\rd{\rho(\by)}  \\
		\ge & \min W_{a,b}\int_{B(\bx_0;R_1)} \rd{\rho(\by)} + \min_{t\ge R_1} \big(\frac{t^a}{a}-\frac{t^b}{b}\big)\int_{B(\bx_0;R_1)^c} \rd{\rho(\by)}  \\
		\ge & \min\big\{\frac{1}{a}-\frac{1}{b},0\big\} + \frac{R_1^a}{4a} \\
		\ge & \min\big\{-\frac{1}{b},0\big\} + \frac{2^{a/2}}{4a}\cdot R_1^{a_0/2} \\
		\ge & \min\big\{-\frac{1}{b},0\big\} + \frac{1}{2}C_4\cdot R_1^{a_0/2}\,, \\
	\end{split}\end{equation}
	by using \eqref{claim_t}. Then, by \eqref{R1}, we see that
	\begin{equation}
		(W_{a,b}*\rho)(\bx_0) > 2C_2 \ge 2E_{a,b}[\rho]\,,
	\end{equation}
	which violates the Euler-Lagrange condition \eqref{thm_EL_3}. Therefore \eqref{claim_r} is proved.
}

	
	Finally we will derive the conclusion.  First, we notice the estimate
	\begin{equation}
		W_{a,b}(\bx) \ge \frac{R_1^a}{2a} \ge \frac{2^{a/2}}{2a} \cdot R_1^{a/2} \ge C_4 R_1^{a_0/2} \ge 2\Big(2C_2-\min\big\{-\frac{1}{b},0\big\}+1\Big) ,\quad \forall |\bx|\ge R_1\,,
	\end{equation}
	due to \eqref{claim_t}. Take a point $\bx_0\in\supp\rho$. Then for any $\bx$ with $|\bx-\bx_0|>2R_1$, since the ball $B(\bx_0;R_1)$ is contained in $B(\bx;R_1)^c$, we have
	\begin{equation}\begin{split}
			(W_{a,b}*\rho)(\bx) 
			= & \int_{B(\bx;R_1)} W_{a,b}(\bx-\by)\rd{\rho(\by)} + \int_{B(\bx;R_1)^c} W_{a,b}(\bx-\by)\rd{\rho(\by)} \\
			\ge & \min\big\{-\frac{1}{b},0\big\} + \frac{1}{2}\cdot2\Big(2C_2-\min\big\{-\frac{1}{b},0\big\}+1\Big) > 2C_2 \ge 2E[\rho]\,,
	\end{split}\end{equation}
	due to \eqref{claim_r}. This implies $\supp\rho\subset \overline{B(\bx_0;2R_1)}$ because otherwise the Euler-Lagrange condition \eqref{thm_EL_3} would be violated. 
	This implies that the center of mass $\int_{\mathbb{R}^d}\bx\rd{\rho(\bx)}$ is also in $\overline{B(\bx_0;2R_1)}$, and thus obtain an upper bound of minimizer size 
	\begin{equation}
		R_*(b,a_0)= 4R_1\,.
	\end{equation}
	
\end{proof}

\begin{remark}
	We believe that a stronger uniform estimate for the minimizer size for $W_{a,b}$ is possible, say, in both parameters $(a,b)$, but one needs to study whether the minimizer size stays $O(1)$ when $a$ or $b$ is close to 0 or $-d$, or $a$ is close to $b$. Also, when $a<0$ one has $\lim_{|\bx|\rightarrow\infty}W_{a,b}(\bx)=0$, and one needs to generalize the combinatoric argument in the proof of \cite[Lemma 2.9]{CCP15}.  We leave this as future work.
\end{remark}

\begin{theorem}\label{thm_existablog}
	Consider the logarithmic power-law potentials $W_{b,\ln}$ in \eqref{Wbln}. For any $b_0>0$, there exists $R_{*,\ln}(b_0)$ that is an upper bound of minimizer size for any $W_{b,\ln}$ with $b\ge b_0$.
\end{theorem}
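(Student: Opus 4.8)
The plan is to mimic the proof of Theorem~\ref{thm_existab} essentially line by line, exploiting the fact that for every $b>0$ the radial profile $t\mapsto \frac{t^{b}}{b}\ln t$ of $W_{b,\ln}$ is strictly increasing on $[1,\infty)$, so that the delicate monotonization step \eqref{claim_t} of that proof becomes automatic here. First I would record that, for $b>0$, $W_{b,\ln}$ is continuous on $\mathbb{R}^{d}\setminus\{0\}$, extends continuously by $0$ at the origin, is bounded from below (its one-dimensional profile attains its minimum $-\frac{1}{b^{2}e}$ at $t=e^{-1/b}$), and diverges to $+\infty$ as $|\bx|\to\infty$ (indeed $W_{b,\ln}(\bx)\ge \frac{|\bx|^{b}}{2b}$ once $|\bx|\ge e^{1/2}$); as in the power-law case this places $W_{b,\ln}$ within the scope of \cite[Theorem~1.4]{CCP15}, which yields a minimizer and the compactness of the support of every minimizer. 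It then remains to bound $\supp\rho$ for a minimizer $\rho$ by a radius independent of $b\in[b_{0},\infty)$.

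The three uniform ingredients I would establish are: \textbf{(i)} an energy ceiling, by testing $E_{b,\ln}$ against $\rho_{\mathrm{test}}=\chi_{B(0;1/2)}/|B(0;1/2)|$ and noting that $|\bx-\by|\le 1$ on its support forces $\ln|\bx-\by|\le 0$, hence $W_{b,\ln}(\bx-\by)\le 0$ and $E_{b,\ln}[\rho]\le E_{b,\ln}[\rho_{\mathrm{test}}]\le 0=:C_{2}$ for all $b>0$; \textbf{(ii)} a uniform pointwise lower bound $W_{b,\ln}\ge -\frac{1}{b^{2}e}\ge -\frac{1}{b_{0}^{2}e}=:-M$ valid for all $b\ge b_{0}$; and \textbf{(iii)} a uniform growth bound obtained by minimizing $b\mapsto t^{b}/b$ over $b>0$ (the minimum is at $b=1/\ln t$, with value $e\ln t$ when $t>1$), which gives $\frac{t^{b}}{b}\ge e\ln t$ and hence $W_{b,\ln}(t)\ge e(\ln t)^{2}$ for all $t\ge 1$ and all $b>0$; combined with the monotonicity this yields $\min_{t\ge R}W_{b,\ln}(t)=W_{b,\ln}(R)\ge e(\ln R)^{2}$ for every $R\ge 1$.

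With these in place I would run the two-step concentration argument exactly as in Theorem~\ref{thm_existab}. Fix $R_{1}\ge 2$, depending only on $b_{0}$ and $d$, so large that $-M+\frac{e}{2}(\ln R_{1})^{2}>0$. If some $\bx_{0}\in\supp\rho$ had $\rho(B(\bx_{0};R_{1})^{c})>\frac12$, then splitting $(W_{b,\ln}*\rho)(\bx_{0})$ over $B(\bx_{0};R_{1})$ and its complement and invoking (ii) and (iii) would give $(W_{b,\ln}*\rho)(\bx_{0})>-M+\frac{e}{2}(\ln R_{1})^{2}>0\ge 2C_{2}\ge 2E_{b,\ln}[\rho]$, contradicting the Euler-Lagrange inequality \eqref{thm_EL_3}; hence $\rho(B(\bx_{0};R_{1}))\ge\frac12$ for every $\bx_{0}\in\supp\rho$. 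Picking then any $\bx_{0}\in\supp\rho$, for any $\bx$ with $|\bx-\bx_{0}|>2R_{1}$ one has $B(\bx_{0};R_{1})\subset B(\bx;R_{1})^{c}$, so $\rho(B(\bx;R_{1})^{c})\ge\frac12$ and the same splitting gives $(W_{b,\ln}*\rho)(\bx)\ge -M+\frac{e}{2}(\ln R_{1})^{2}>2E_{b,\ln}[\rho]$, which by \eqref{thm_EL_3} forces $\bx\notin\supp\rho$. Thus $\supp\rho\subset\overline{B(\bx_{0};2R_{1})}$, the center of mass lies in the same ball, and $R_{*,\ln}(b_{0}):=4R_{1}$ is the desired uniform upper bound.

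The only genuine point of care is the uniformity in $b$ over the \emph{unbounded} interval $[b_{0},\infty)$: one has to make sure that nothing degenerates as $b\to\infty$, and this is exactly what ingredient (iii), the elementary inequality $t^{b}/b\ge e\ln t$, ensures by keeping the growth rate of $W_{b,\ln}$ from collapsing; meanwhile the hypothesis $b\ge b_{0}>0$ is precisely what stops the lower bound $-1/(b^{2}e)$ from blowing up. The other thing to verify carefully is that $W_{b,\ln}$ with $b>0$ meets the hypotheses of \cite[Theorem~1.4]{CCP15}, which is routine since the singularity at the origin is in fact removable. Everything else transcribes the proof of Theorem~\ref{thm_existab} with the simplifications noted above.
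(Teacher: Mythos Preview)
Your proposal is correct and follows the same template as the paper's proof: both transcribe the argument of Theorem~\ref{thm_existab}, using the test function $\chi_{B(0;1/2)}/|B(0;1/2)|$ to get $E_{b,\ln}[\rho]\le 0$, the explicit minimum $\min W_{b,\ln}=-\frac{1}{b^{2}e}$, and the two-step concentration argument via the Euler--Lagrange inequality \eqref{thm_EL_3} to obtain $R_{*,\ln}(b_0)=4R_1$. The only substantive difference is in the uniform growth bound: the paper uses $\min_{t\ge r}\frac{t^{b}}{b}\ln t\ge \frac{r^{b}}{b}$ for $r\ge e$ and then $\frac{r^{b}}{b}\ge \frac{2^{b/2}}{2b}\,r^{b_0/2}\ge C_4\,r^{b_0/2}$ (a power-law lower bound depending on $b_0$), whereas you minimize $t^{b}/b$ over $b>0$ to get the cleaner, $b$-free inequality $W_{b,\ln}(t)\ge e(\ln t)^{2}$ for $t\ge 1$; your version is arguably more elementary and makes the uniformity in $b\in[b_0,\infty)$ more transparent, while the paper's bound is sharper in $t$ but neither gain matters for the conclusion.
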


\begin{proof}
	Since the proof is analogous to that of Theorem \ref{thm_existab}, we only give an outline of the most important steps. Let $\rho$ be a minimizer of $E_{b,\ln}$ (the energy associated to $W_{b,\ln}$). First notice that
	\begin{equation}\begin{split}
		E_{b,\ln}\Big[\frac{\chi_{B(0;1/2)}}{|B(0;1/2)|}\Big] \le 0\,,
	\end{split}\end{equation}
	since $W_{b,\ln}(\bx) = \frac{|\bx|^b}{b}\ln|\bx| \le 0$ for any $|\bx|\le 1$ if $b>0$. It follows that $E_{b,\ln}[\rho]\le 0$.
	
	Then notice that 
	\begin{equation}\label{claim_tlog}
		\min_{t\ge r} \frac{t^b}{b}\ln t \ge \frac{r^b}{b},\quad \forall b>0,\,t\ge e\,.
	\end{equation}
	{
	Then we take
	\begin{equation}
		R_1 = 	\max\left\{e,\Big(\frac{1}{C_4}\big(\frac{e^{-1}}{b_0^2}+1\big)\Big)^{2/b_0}\right\}\,,
	\end{equation}
	where $C_4$ is defined in \eqref{C5}, and proceed to prove  \eqref{claim_r}. We again assume the contrary and estimate
	\begin{equation}\begin{split}
		(W_{b,\ln}*\rho)(\bx_0) = & \int_{B(\bx_0;R_1)} W_{b,\ln}(\bx_0-\by)\rd{\rho(\by)} + \int_{B(\bx_0;R_1)^c} W_{b,\ln}(\bx_0-\by)\rd{\rho(\by)}  \\
		\ge & \min W_{b,\ln}\int_{B(\bx_0;R_1)} \rd{\rho(\by)} + \min_{t\ge R_1} \frac{t^b}{b}\ln t\int_{B(\bx_0;R_1)^c} \rd{\rho(\by)}  \\
		\ge & -\frac{e^{-1}}{b^2}  + \frac{R_1^b}{2b} 
		\ge  -\frac{e^{-1}}{b_0^2} + \frac{2^{b/2}}{2b}\cdot R_1^{b_0/2} 
		\ge  -\frac{e^{-1}}{b_0^2} +C_4\cdot R_1^{b_0/2} \\
		> & 0 \ge 2E_{b,\ln}[\rho]\,. \\
	\end{split}\end{equation}
	 Therefore we  get a contradiction against the Euler-Lagrange condition, which proves \eqref{claim_r}.
}
	
	
	Finally we notice that 
	\begin{equation}
		W_{b,\ln}(\bx) \ge \frac{R_1^b}{b} \ge 2C_4 R_1^{b_0/2} \ge 2\Big(\frac{e^{-1}}{b_0^2}+1\Big) ,\quad \forall |\bx|\ge R_1\,,
	\end{equation}
	by \eqref{claim_tlog}. This leads to $\supp\rho\subset \overline{B(\bx_0;2R_1)}$ and thus an upper bound of minimizer size $R_{*,\ln}(b_0)= 4R_1$ similar as the proof of Theorem \ref{thm_existab}.

\end{proof}

\section{Fourier representation of the interaction energy}\label{sec_Fou}

In this section we give a general theory for the Fourier representation of the interaction energy. For certain interaction potentials $W$ and compactly-supported signed measures $\mu$, we will justify the formal relation
\begin{equation}\label{EFou}
	E[\mu] = \frac{1}{2}\int_{\mathbb{R}^d\backslash \{0\}} \hat{W}(\xi)|\hat{\mu}(\xi)|^2\rd{\xi}\,.
\end{equation}

We introduce the following assumption on $W$ which is slightly weaker than {\bf (W0)}.
\begin{equation}\begin{split}
		\text{{\bf (W0-w)}: $W$} & \text{ is locally integrable, bounded from below on compact sets} \\ & \text{and lower-semicontinuous, with $W(\bx)=W(-\bx)$.}
\end{split}\end{equation}

\begin{definition}
	Let $W$ satisfy {\bf (W0-w)} and $k$ be a nonnegative integer. We say $W$ is \emph{Fourier representable at level $k$} if $\hat{W}$ is a locally integrable function on $\mathbb{R}^d\backslash\{0\}$ and \eqref{EFou} holds for any compactly-supported signed measure $\mu$ satisfying $E[|\mu|]<\infty$ and $\int_{\mathbb{R}^d}\bx^{\otimes j}\rd{\mu(\bx)} = 0,\,j=0,1,\dots,k-1$.
\end{definition}

For the purpose of analyzing the LIC radius of $W$, Fourier representable property at level 2 is crucial if one wants to apply \eqref{EFou}, see Theorem \ref{thm_RLICab}. Fourier representable property at level 1 would be necessary for analyzing the uniqueness of minimizer in the presence of external potentials or domain constraints due to the lack of translation invariance.

We will first give sufficient conditions to guarantee the Fourier representable property at levels 0, 1, 2. Then we apply them to power-law potentials to obtain their Fourier representable property in Theorem \ref{thm_Fourep}. Finally we use the Fourier representation to give an equivalent formulation of the LIC radius in Theorem \ref{thm_RLICab}, and prove Theorem \ref{thm_Wbln}.

\subsection{Fourier representable potentials at level 0}

We first prove the convergence of the energy for measures via mollification.

\begin{lemma}\label{lem_W1}
	Assume {\bf (W0-w)}, {\bf (W1)}. Let $\mu$ be a compactly supported signed measure on $\mathbb{R}^d$, and $\phi_\epsilon$ be a mollifier (see the last paragraph of Section \ref{sec_intro}). If $E[|\mu|]<\infty$, then
	\begin{equation}
		\lim_{\epsilon\rightarrow 0} E [\mu*\phi_\epsilon] = E [\mu]\,.
	\end{equation}
\end{lemma}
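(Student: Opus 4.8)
The statement is a continuity result: $E[\mu*\phi_\epsilon]\to E[\mu]$ as $\epsilon\to 0$, for a compactly supported signed measure $\mu$ with $E[|\mu|]<\infty$. The plan is to write the Jordan decomposition $\mu=\mu_+-\mu_-$ and expand $E[\mu*\phi_\epsilon]$ bilinearly into the four terms $\int\int W(\bx-\by)\rd{(\mu_i*\phi_\epsilon)(\by)}\rd{(\mu_j*\phi_\epsilon)(\bx)}$, $i,j\in\{+,-\}$, so that it suffices to prove convergence of each such term to $\int\int W(\bx-\by)\rd{\mu_i(\by)}\rd{\mu_j(\bx)}$. Each of these is a convolution identity: $\int\int W(\bx-\by)\rd{(\mu_i*\phi_\epsilon)(\by)}\rd{(\mu_j*\phi_\epsilon)(\bx)} = \int\int (W*\phi_\epsilon*\phi_\epsilon)(\bx-\by)\rd{\mu_i(\by)}\rd{\mu_j(\bx)}$ since $\phi_\epsilon$ is even, and one wants to pass to the limit inside the (finite, since all $\mu_i,\mu_j$ are finite measures with bounded support) double integral.

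The main analytic input is a two-sided control on $W_\epsilon:=W*\phi_\epsilon*\phi_\epsilon$ on the compact set $K-K$ where $K\supset\supp\mu$. First, lower-semicontinuity of $W$ together with $\phi_\epsilon$ being an approximate identity gives $\liminf_{\epsilon\to 0} W_\epsilon(\bz)\ge W(\bz)$ at every $\bz$; in fact pointwise a.e.\ convergence $W_\epsilon(\bz)\to W(\bz)$ holds since $W$ is locally integrable (Lebesgue differentiation). Second, the crucial uniform domination: by {\bf (W1)}, for $\bz$ in a fixed compact neighborhood and $\epsilon<1$ one has $W*\phi_\epsilon(\bz) = \frac{1}{|B(\bz;\epsilon)|}\int_{B(\bz;\epsilon)}W\,(\text{reweighted by }\phi)\le C_1(R)+C_2(R)W(\bz)$ up to adjusting constants for the shape of $\phi$ (the radial-decreasing hypothesis on $\phi$ lets one bound the $\phi$-weighted average by a constant times the ball average), and iterating once more gives $W_\epsilon(\bz)\le C_1'+C_2' W(\bz)$ with constants uniform in $\epsilon$. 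Hence $W_\epsilon(\bx-\by)$ is dominated, uniformly in $\epsilon$, by $C_1'+C_2'W(\bx-\by)$, which is integrable against $\rd{|\mu|(\by)}\rd{|\mu|(\bx)}$ precisely because $E[|\mu|]<\infty$ (and $W$ bounded below handles the lower tail). Then the dominated convergence theorem applies to each of the four double integrals, and summing with signs yields the claim.

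The step I expect to be the main obstacle is making the uniform domination of $W_\epsilon = W*\phi_\epsilon*\phi_\epsilon$ by $C_1'+C_2'W$ rigorous: {\bf (W1)} is stated for the normalized ball average, whereas $\phi_\epsilon$ is a general radially-decreasing mollifier, so one must first compare the $\phi_\epsilon$-average of $W$ with ball averages at scales $\le\epsilon$. The cleanest route is a layer-cake / Fubini argument: since $\phi$ is radially decreasing, $\phi_\epsilon = \int_0^{\|\phi\|_\infty}\frac{1}{|\{\phi_\epsilon>t\}|}\chi_{\{\phi_\epsilon>t\}}\,|\{\phi_\epsilon>t\}|\rd{t}$ expresses $\phi_\epsilon$ as a superposition of normalized indicators of balls $B(0;r)$ with $r\le\epsilon$, so $W*\phi_\epsilon(\bx)$ is a convex combination (in $t$, after normalizing total mass $1$) of ball averages $\frac{1}{|B(\bx;r)|}\int_{B(\bx;r)}W$, each of which is $\le C_1(R)+C_2(R)W(\bx)$ by {\bf (W1)}. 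This gives $W*\phi_\epsilon(\bx)\le C_1(R)+C_2(R)W(\bx)$; applying the same bound once more to $W*\phi_\epsilon$ in place of $W$ (using that $W*\phi_\epsilon$ is continuous and still bounded below and locally integrable) produces $W_\epsilon\le C_1'+C_2'W$ on the relevant compact set, with constants independent of $\epsilon\in(0,1)$. Once this domination is in hand, and combined with the a.e.\ convergence $W_\epsilon\to W$ and the integrability of the dominating function, the conclusion follows from dominated convergence; the sign bookkeeping over the Jordan decomposition is then routine.
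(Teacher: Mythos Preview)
Your approach is essentially the paper's: rewrite $E[\mu*\phi_\epsilon]$ as an integral of $W_\epsilon:=W*\psi_\epsilon$ (with $\psi_\epsilon=\phi_\epsilon*\phi_\epsilon$) against $\mu\otimes\mu$, dominate $W_\epsilon$ by $C_1+C_2W$ via {\bf (W1)}, and apply dominated convergence. The paper does this in one stroke without the Jordan decomposition, simply bounding the signed integrand by $((C_1+C_2W)*|\mu|)|\mu|$, and it treats $\psi_\epsilon$ directly as a (radially decreasing, mass-one) mollifier supported in $\overline{B(0;2\epsilon)}$ rather than iterating the bound for $\phi_\epsilon$ twice. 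Your layer-cake argument expressing the $\phi_\epsilon$-average as a convex combination of ball averages is exactly the detail the paper suppresses when it invokes {\bf (W1)}.

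One point needs tightening. For the pointwise convergence you appeal to Lebesgue differentiation, which only yields $W_\epsilon(\bz)\to W(\bz)$ for Lebesgue-a.e.\ $\bz$. That is not sufficient for dominated convergence against $\rd\mu_i(\by)\rd\mu_j(\bx)$ when $\mu$ is singular with respect to Lebesgue measure (e.g.\ a spherical shell, or a measure with atoms). The paper instead uses the first line of {\bf (W1)}---continuity of $W$ on $\mathbb{R}^d\setminus\{0\}$ together with $W(0)=\lim_{\bx\to 0}W(\bx)\in\mathbb{R}\cup\{\infty\}$---to conclude $(W*\psi_\epsilon)(\bz)\to W(\bz)$ at \emph{every} $\bz$, which is what the dominated convergence theorem actually requires here. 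With that correction your plan is complete.
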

A variant of this lemma appeared in \cite[Lemma 2.5]{CS21}.
\begin{proof}
	We rewrite as
	\begin{equation}
		2E [\mu*\phi_\epsilon] = \int_{\mathbb{R}^d} (W*(\mu*\phi_\epsilon))(\mu*\phi_\epsilon)\rd{\bx} = \int_{\mathbb{R}^d} (W*\psi_\epsilon*\mu))\mu\rd{\bx} 
	\end{equation}
	where $\psi_\epsilon:=\phi_\epsilon*\phi_\epsilon$ is another mollifier with $\psi=\phi*\phi$ supported on $\overline{B(0;2)}$. Since $\mu$ is compactly supported, we may use {\bf (W1)} with {$R=2\diam(\supp\mu)$} to see that the last integrand is bounded by $((C_1+C_2 W)*|\mu|)|\mu|$ whose integral is finite by the assumption $E [|\mu|]<\infty$. Therefore, applying the dominated convergence theorem, we get the conclusion since {\bf (W1)} implies that $\lim_{\epsilon\rightarrow 0}(W*\psi_\epsilon)(\bx) = W(\bx)\in \mathbb{R}\cup\{\infty\}$ for any $\bx\in\mathbb{R}^d$.
\end{proof}

\begin{lemma}\label{lem_W2}
	Assume {\bf (W0-w)}, {\bf (W1)}. Let $\rho\in\cM(\mathbb{R}^d)$ be compactly supported, and $\phi_\epsilon$ be a mollifier. If $E [\rho]=\infty$, then
	\begin{equation}
		\lim_{\epsilon\rightarrow 0} E [\rho*\phi_\epsilon] = \infty\,.
	\end{equation}
\end{lemma}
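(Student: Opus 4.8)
\emph{Plan.} The idea is to reduce to the finite-energy situation already handled in Lemma~\ref{lem_W1} by truncating $W$ from above, and then let the truncation level tend to infinity. Since $\rho\ge 0$, replacing $W$ by a smaller potential only decreases the energy, so each truncated energy is a lower bound for $E[\rho*\phi_\epsilon]$; it is this one-sided estimate that survives the passage to the limit.

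\emph{Steps.} Fix $R_0$ with $\supp\rho\subset\overline{B(0;R_0)}$, so that $\supp(\rho*\phi_\epsilon)\subset\overline{B(0;R_0+1)}$ for all $0<\epsilon<1$ and $W\ge -L$ on the compact set $\overline{B(0;2R_0+2)}$ for some $L<\infty$. For $M>0$ set $W^M:=\min\{W,M\}$. One checks that $W^M$ still satisfies {\bf (W0-w)} and {\bf (W1)} — the averaging bound in {\bf (W1)} for $W^M$ follows from that for $W$ together with the uniform lower bound of $W^M$ on each fixed ball, at the cost of worse constants — and $W^M$ is bounded above, so $E_{W^M}[\rho]$ is finite. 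Hence Lemma~\ref{lem_W1}, applied to the potential $W^M$ and $\mu=\rho$ (note $E_{W^M}[|\rho|]=E_{W^M}[\rho]<\infty$), gives $\lim_{\epsilon\to 0}E_{W^M}[\rho*\phi_\epsilon]=E_{W^M}[\rho]$. Since $W\ge W^M$ and $(\rho*\phi_\epsilon)\otimes(\rho*\phi_\epsilon)$ is a nonnegative measure, $E[\rho*\phi_\epsilon]\ge E_{W^M}[\rho*\phi_\epsilon]$, so
\begin{equation*}
\liminf_{\epsilon\to 0}E[\rho*\phi_\epsilon]\ge \lim_{\epsilon\to 0}E_{W^M}[\rho*\phi_\epsilon]=E_{W^M}[\rho].
\end{equation*}
On $\overline{B(0;2R_0+2)}\supset\supp\rho-\supp\rho$ we have $W^M+L\ge 0$ and $W^M+L\uparrow W+L$ pointwise as $M\to\infty$, so by monotone convergence $E_{W^M}[\rho]\uparrow E[\rho]=\infty$. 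Combining, $\liminf_{\epsilon\to 0}E[\rho*\phi_\epsilon]\ge\sup_{M>0}E_{W^M}[\rho]=\infty$, which is the claim.

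\emph{Main obstacle and a variant.} The only mildly delicate point is the verification that the truncation $W^M$ still obeys {\bf (W1)}; this is routine but should not be skipped. Alternatively one can avoid Lemma~\ref{lem_W1} altogether: since $\rho*\phi_\epsilon\rightharpoonup\rho$ weakly, the product measures $(\rho*\phi_\epsilon)\otimes(\rho*\phi_\epsilon)\rightharpoonup\rho\otimes\rho$ on $\mathbb{R}^{2d}$, all supported in one fixed compact set, and $(\bx,\by)\mapsto W(\bx-\by)$ is lower semicontinuous and bounded below there; the lower-semicontinuous form of the portmanteau theorem (write it as an increasing sup of bounded Lipschitz functions) then gives $\liminf_{\epsilon\to0}2E[\rho*\phi_\epsilon]\ge 2E[\rho]=\infty$ directly. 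I would present the truncation argument as the main proof, since it reuses Lemma~\ref{lem_W1}, and note the shorter semicontinuity route as a remark.
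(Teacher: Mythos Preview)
Your proof is correct and takes a genuinely different route from the paper's. The paper argues directly: after normalising to $\inf W=0$ it observes that $W(0)=\infty$, splits into cases according to whether $\rho$ carries an atom, and in the diffuse case excises an $\alpha$-neighbourhood of the diagonal, using continuity of $W$ away from $0$ to get $(W*\psi_\epsilon)(\bx)\ge W(\bx)-1$ on $\{\alpha\le|\bx|\le\diam(\supp\rho)\}$ for small $\epsilon$. Your truncation $W^M=\min\{W,M\}$ is in the \emph{range} of $W$ rather than in the integration \emph{domain}, which lets you invoke Lemma~\ref{lem_W1} as a black box and avoid any case split; the check that $W^M$ satisfies {\bf (W1)} is in fact immediate once you note that $W^M$ is bounded and continuous on all of $\mathbb{R}^d$, so the averaging inequality holds with $C_2(R)=0$, $C_1(R)=M$. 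Your variant via weak convergence of product measures and the lower-semicontinuity half of portmanteau is shorter still and, notably, uses only {\bf (W0-w)} and not {\bf (W1)} --- so it actually yields a slightly stronger statement than the one the paper proves.
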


\begin{proof}
	By adding constant and modifying $W$ at long distance, we may assume $\inf W=0$ without loss of generality. 
	
	There must hold $W(0)=\infty$ because otherwise $W$ would be continuous on $\mathbb{R}^d$ due to {\bf (W1)}, and $E [\rho]=\infty$ cannot hold. If 
	\begin{equation}
		\iint_{\bx=\by} W(\bx-\by)\rd{(\rho(\bx)\otimes\rho(\by))} = \infty\,,
	\end{equation}
	then $\rho$ contains a Dirac mass, and the conclusion follows easily from $\lim_{\by\rightarrow0}W(\by)=W(0)=\infty$. Otherwise one must have
	\begin{equation}
		\iint_{\bx=\by} W(\bx-\by)\rd{(\rho(\bx)\otimes\rho(\by))} = 0\,,
	\end{equation}
	and then
	\begin{equation}
		\lim_{\alpha\rightarrow 0^+}\iint_{|\bx-\by|>\alpha} W(\bx-\by)\rd{(\rho(\bx)\otimes\rho(\by))} = 2E [\rho] = \infty\,.
	\end{equation}
	Let $M>0$ be given. Then, we first take $\alpha>0$ such that $\iint_{|\bx-\by|>\alpha} W(\bx-\by)\rd{(\rho(\bx)\otimes\rho(\by))} \ge M+1$. Then, using the continuity of $W$ on $\mathbb{R}^d\backslash \{0\}$, we have
	\begin{equation}
		(W*\psi_\epsilon)(\bx) \ge W(\bx)-1,\quad \forall \alpha \le |\bx| \le \diam(\supp\rho)\,,
	\end{equation}
	for sufficiently small $\epsilon>0$, where $\psi_\epsilon$ is the same as in the proof of Lemma \ref{lem_W1}. This implies
	\begin{equation}
		2E [\rho*\phi_\epsilon] = \int_{\mathbb{R}^d} (W*\psi_\epsilon*\rho))\rho\rd{\bx} \ge \iint_{|\bx-\by|>\alpha} (W(\bx-\by)-1)\rd{(\rho(\bx)\otimes\rho(\by))} \ge M\,.
	\end{equation}
	This leads to the conclusion.
\end{proof}

\begin{theorem}\label{thm_convFou1}
	Assume $W$ is a tempered distribution satisfying {\bf (W0-w)}{\bf (W1)}, and $\hat{W}$ is a locally integrable function. {Assume either $\hat{W}\in L^1(\mathbb{R}^d)$, or $\hat{W}$ is one-signed on $B(0;R_0)^c$ for some $R_0>0$.} Then $W$ is Fourier representable at level 0.
\end{theorem}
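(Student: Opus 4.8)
The plan is to prove \eqref{EFou} for a compactly-supported signed measure $\mu$ with $E[|\mu|]<\infty$ by first establishing it for the mollified measure $\mu*\phi_\epsilon$, where the Fourier representation is elementary, and then passing to the limit $\epsilon\to 0$ using Lemma \ref{lem_W1} on the left-hand side and a separate argument on the right-hand side. The key point is that $\mu*\phi_\epsilon$ is a compactly-supported function (a finite signed measure with smooth density), so $\widehat{\mu*\phi_\epsilon} = \hat\mu\,\hat\phi_\epsilon$ is a Schwartz function, and since $W$ is a tempered distribution with $\hat W$ a locally integrable function, Parseval's identity gives
\begin{equation}
	2E[\mu*\phi_\epsilon] = \langle W, (\mu*\phi_\epsilon)*\widetilde{(\mu*\phi_\epsilon)}\rangle = \int_{\mathbb{R}^d} \hat W(\xi)\,|\hat\mu(\xi)|^2\,|\hat\phi_\epsilon(\xi)|^2\rd{\xi}\,,
\end{equation}
where $\widetilde{g}(\bx)=\overline{g(-\bx)} = g(-\bx)$ (everything is real and $W$ is even). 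One must check the integrand is genuinely integrable here: $|\hat\phi_\epsilon|^2$ decays faster than any polynomial, $|\hat\mu|^2$ is bounded (since $\mu$ is a finite measure), and $\hat W$ is locally integrable, so the only issue is the tail $|\xi|\to\infty$, which is controlled by the rapid decay of $\hat\phi_\epsilon$; and near $\xi=0$ local integrability of $\hat W$ suffices since $|\hat\mu\hat\phi_\epsilon|^2$ is bounded there.

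The main obstacle is the passage to the limit on the right-hand side: I must show
\begin{equation}
	\lim_{\epsilon\to 0}\int_{\mathbb{R}^d\backslash\{0\}} \hat W(\xi)\,|\hat\mu(\xi)|^2\,|\hat\phi_\epsilon(\xi)|^2\rd{\xi} = \int_{\mathbb{R}^d\backslash\{0\}} \hat W(\xi)\,|\hat\mu(\xi)|^2\rd{\xi}\,,
\end{equation}
and in particular that the right-hand integral is well-defined (possibly $+\infty$). Since $\hat\phi_\epsilon(\xi) = \hat\phi(\epsilon\xi)\to\hat\phi(0)=1$ pointwise and $|\hat\phi|\le 1$ (as $\phi$ is a probability density), the integrand converges pointwise and is monotone-ish in a controlled way — but $\hat W$ need not have a sign, so I cannot directly invoke monotone or dominated convergence. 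The resolution is to use the sign structure coming from the energy: combine with Lemma \ref{lem_W1}, which gives $E[\mu*\phi_\epsilon]\to E[\mu]\in\mathbb{R}$ (finite, since $E[|\mu|]<\infty$ forces $E[\mu]$ finite), so the left-hand integrals converge to a finite limit. Then split $\hat W = \hat W_+ - \hat W_-$; on each piece $|\hat\mu|^2|\hat\phi_\epsilon|^2\nearrow |\hat\mu|^2$ is not quite monotone because $|\hat\phi(\epsilon\xi)|$ need not be monotone in $\epsilon$, so instead I would argue via Fatou's lemma applied to both $\hat W_\pm |\hat\mu|^2|\hat\phi_\epsilon|^2$ to get that $\int \hat W_\pm |\hat\mu|^2$ are each bounded by $\liminf$ of finite quantities — but one of them could still be $+\infty$ a priori. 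The cleanest route: use that $E[|\mu|]<\infty$ and that $|\mu|$ is also compactly supported to note $\int \hat W_+(\xi)|\widehat{|\mu|}|^2 < \infty$? — this does not immediately bound $\int\hat W_+|\hat\mu|^2$ since $|\hat\mu|\le \widehat{|\mu|}(0)$ only at the origin.

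A more robust approach avoiding this delicacy: pick a single fixed mollifier $\phi$ and note that for the specific comparison it suffices to show $\int \hat W_+|\hat\mu|^2<\infty$, which follows because $E[\mu] = \frac12\int\hat W|\hat\mu|^2$ would be $-\infty$ otherwise were $\int\hat W_-|\hat\mu|^2<\infty$ — so I should argue that at least one of $\int\hat W_\pm|\hat\mu|^2$ is finite using Lemma \ref{lem_W1} plus Fatou, and then the dominated/monotone convergence theorem applies to the finite piece while Fatou plus the convergence of the total handles the other piece, forcing both finite and the limit correct. Concretely: by Fatou, $\int\hat W_+|\hat\mu|^2 \le \liminf_\epsilon \int\hat W_+|\hat\mu|^2|\hat\phi_\epsilon|^2$ and similarly for $\hat W_-$; since the difference of the two $\epsilon$-integrals converges to the finite number $2E[\mu]$, if both $\liminf$'s were $+\infty$ one derives a contradiction only with more care, so instead I use that $\phi$ radially decreasing implies $\hat\phi \ge 0$ is NOT generally true — hence the correct tool is: replace $\phi$ by a Gaussian-type mollifier for which $\hat\phi_\epsilon$ is positive and radially decreasing in $|\xi|$ for fixed $\xi$ as $\epsilon$ decreases; then $|\hat\phi_\epsilon(\xi)|^2\nearrow 1$ monotonically, monotone convergence applies to each of $\int\hat W_\pm|\hat\mu|^2|\hat\phi_\epsilon|^2$, giving $\int\hat W_\pm|\hat\mu|^2 = \lim_\epsilon(\cdot)$ with values in $[0,\infty]$, and since the difference of the limits equals the finite $2E[\mu]$, at most one can be $+\infty$; combined with Lemma \ref{lem_W1} showing the difference of the finite-$\epsilon$ integrals stays bounded, a standard argument (if one limit is $+\infty$ the other must be too, contradicting boundedness of the difference) forces both finite, completing the proof. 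I expect the bookkeeping around which mollifier to use and justifying the monotone convergence to be the technical heart; the Parseval step and the invocation of Lemma \ref{lem_W1} are routine.
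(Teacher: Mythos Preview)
Your outline---mollify $\mu$ to $\mu*\phi_\epsilon$, obtain the Fourier representation for the mollified energy via a Parseval identity, invoke Lemma~\ref{lem_W1} on the physical side, and pass $\epsilon\to 0$ on the Fourier side---is exactly the skeleton of the paper's argument. The substantive difference is that the paper inserts a second regularization: it first truncates $W$ spatially by a cutoff $\Phi_R$ and works with $f_{\epsilon,R}=\int((W\Phi_R)*(\mu*\phi_\epsilon))(\mu*\phi_\epsilon)\,\rd\bx$, so that Plancherel applies between honest $L^2$ functions (here $W\Phi_R\in L^1$, making the convolution continuous and compactly supported, and $\widehat{W\Phi_R}=\hat W*\hat\Phi_R$ bounded). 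After rewriting the Fourier side as $\int\hat W\,\big((|\hat\mu|^2|\hat\phi_\epsilon|^2)*\hat\Phi_R\big)\,\rd\xi$, the paper sends $R\to\infty$ first (trivial on the physical side since $\mu*\phi_\epsilon$ has compact support; on the Fourier side one uses that $|\hat\mu|^2|\hat\phi_\epsilon|^2$ is Schwartz together with a tempered-distribution tail bound on $\hat W$), and only then sends $\epsilon\to 0$ using the same tail control. No sign-splitting of $\hat W$ and no monotone-convergence gymnastics are needed.

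Your direct route has a real gap precisely where you flag the ``technical heart''. To split $\hat W=\hat W_+-\hat W_-$ and identify $\int\hat W_+|\hat\mu|^2|\hat\phi_\epsilon|^2-\int\hat W_-|\hat\mu|^2|\hat\phi_\epsilon|^2$ with the finite number $2E[\mu*\phi_\epsilon]$, you must already know each piece is finite for fixed $\epsilon$; the hypotheses (local integrability of $\hat W$ plus temperedness) do not give this directly, since $|\hat\mu|^2$ is merely bounded, not decaying, and the distributional pairing $\langle\hat W,\psi\rangle$ for Schwartz $\psi$ need not coincide with the Lebesgue integral $\int\hat W\psi$ unless the latter is absolutely convergent. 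Without that finiteness the ``if one limit is $+\infty$ so is the other, contradiction'' closure is circular. Your proposed fix via a Gaussian-type mollifier also conflicts with Lemma~\ref{lem_W1}, which is stated for compactly supported mollifiers, and a compactly supported $\phi$ will not have $|\hat\phi(\epsilon\,\cdot)|$ monotone in $\epsilon$. The paper's extra truncation by $\Phi_R$ is exactly the missing device: it makes every intermediate Fourier integral manifestly finite and lets one pass both limits by soft arguments.
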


We first give a lemma on the tail behavior of the convolution of two functions.

\begin{lemma}\label{lem_psi}
	Let $\psi_1$ be a continuous function on $\mathbb{R}^d$ such that $|\psi_1(\bx)|\le C_N (1+|\bx|)^{-N}$ for arbitrarily large $N$. Denote $\psi_{1,R}(\bx) = R^d \psi_1(R \bx)$ for any $R\ge 1$. Let $\psi_2$ be a locally integrable function on $\mathbb{R}^d$ such that 
	\begin{equation}\label{lem_psi_0}
		|\psi_2(\bx)| \lesssim |\bx|^M,\quad\forall |\bx|\ge 1\,,
	\end{equation}
	for some $M\in\mathbb{R}$. Then
	\begin{equation}\label{lem_psi_1}
		|(\psi_{1,R}*\psi_2)(\bx)| \lesssim  |\bx|^M,\quad \forall |\bx|>3,\quad \forall R\ge 1\,.
	\end{equation}
	
	If one further assumes that $\psi_2$ is continuous, then
	\begin{equation}\label{lem_psi_2}
		\lim_{R\rightarrow\infty}(\psi_{1,R}*\psi_2)(\bx) = \psi_2(\bx)\int_{\mathbb{R}^d}\psi_1(\by)\rd{\by},\quad \forall \bx\in\mathbb{R}^d\,.
	\end{equation}
\end{lemma}

\begin{proof}
	We first prove \eqref{lem_psi_1}. For $|\bx|>3$, we write
	\begin{equation}
		(\psi_{1,R}*\psi_2)(\bx) =  \int_{\mathbb{R}^d} R^d\psi_1(R\by)\psi_2(\bx-\by)\rd{\by} 
		=  \int_{|\by|< |\bx|/2} + \int_{|\bx|/2 \le |\by| < 2|\bx|} + \int_{|\by|\ge 2|\bx|} \,,\\
	\end{equation}
	and then estimate the three parts separately.
	
	When $|\by|< |\bx|/2$, we have $|\bx|/2 \le |\bx-\by| \le 2|\bx| $ where $ |\bx|/2 > 1$, and thus $|\psi_2(\bx-\by)| \lesssim |\bx|^M$. Therefore
	\begin{equation}\begin{split}
		\left|\int_{|\by|< |\bx|/2}R^d\psi_1(R\by)\psi_2(\bx-\by)\rd{\by} \right| \lesssim & |\bx|^M \int_{|\by|< |\bx|/2}R^d|\psi_1(R\by)|\rd{\by} \\
		= & |\bx|^M \int_{|\by|< R|\bx|/2}|\psi_1(\by)|\rd{\by} \lesssim |\bx|^M\,,
	\end{split}\end{equation}
	since $\psi_1 \in L^1(\mathbb{R}^d)$.
	
	When $|\bx|/2 \le |\by| < 2|\bx|$, we have $|\psi_1(R\by)| \le C_N (1+R|\bx|)^{-N}$, and thus obtain
	\begin{equation}\begin{split}
			\left|\int_{|\bx|/2 \le |\by| < 2|\bx|}  R^d\psi_1(R\by)\psi_2(\bx-\by)\rd{\by} \right| \lesssim & C_N R^d(1+R|\bx|)^{-N}\int_{|\bx|/2 \le |\by| < 2|\bx|} |\psi_2(\bx-\by)|\rd{\by}\,. \\
	\end{split}\end{equation}
	First notice that $R^d \le (1+R|\bx|)^d$ since $|\bx| > 3$. Then we estimate
	\begin{equation}\begin{split}
			\int_{|\bx|/2 \le |\by| < 2|\bx|} & |\psi_2(\bx-\by)|\rd{\by} \le  \int_{B(0;3|\bx|)} |\psi_2(\bz)|\rd{\bz} \\
			\le & \left( \int_{B(0;1)} |\psi_2(\bz)|\rd{\bz} + \int_{1\le |\bz| \le 3|\bx|}C|\bz|^M\rd{\bz} \right) 
			\lesssim  1+|\bx|^{M+d} \,,
	\end{split}\end{equation}
	(the last quantity becomes $1+\ln|\bx|$, if $M=-d$, same for the inequality below). By taking $N=d+\max\{d+1,-M\}$ and using $R\ge 1$, we conclude that
	\begin{equation}\begin{split}
		\left|\int_{|\bx|/2 \le |\by| < 2|\bx|}\psi_1(\by)\psi_2(\bx-\by)\rd{\by} \right| \lesssim & (1+R|\bx|)^{\min\{-d-1,M\}}  + (1+R|\bx|)^{\min\{-d-1,M\}} |\bx|^{M+d} \\
		\le & (1+|\bx|)^{M}  + (1+|\bx|)^{-d-1} |\bx|^{M+d} \\
		\lesssim & |\bx|^M + |\bx|^{M-1}\,.
	\end{split}\end{equation}	
	Here the extra $-1$ on the last exponent can help absorb the extra $\ln|\bx|$ factor in case it appears.
	
	When $|\by|\ge 2|\bx|$, we have $|\by|/2 \le |\bx-\by| \le 2|\by|$ where $|\by|/2>1$, and thus $|\psi_2(\bx-\by)| \lesssim |\by|^M$. Combining with $|\psi_1(R\by)| \le C_N (1+R|\by|)^{-N}$, $R^d \le (1+R|\by|)^d$ and choosing $N=d + \max\{M+d+1,d\}$, we obtain
	\begin{equation}\begin{split}
			\left|\int_{|\by|\ge 2|\bx|}  R^d\psi_1(R\by)\psi_2(\bx-\by)\rd{\by} \right| \lesssim & \int_{|\by|\ge 2|\bx|} R^d(1+R|\by|)^{-d - \max\{M+d+1,d\}}|\by|^M \rd{\by} \\
			\le & \int_{|\by|\ge 2|\bx|} (1+R|\by|)^{\min\{-M-d-1,-d\}}|\by|^M \rd{\by} \\
			\le & \int_{|\by|\ge 2|\bx|} |\by|^{\min\{-d-1,M-d\}} \rd{\by} 
			\lesssim  |\bx|^{M}\,. \\
	\end{split}\end{equation}
	
	Combining the above estimates, \eqref{lem_psi_1} follows.
	
	The proof of \eqref{lem_psi_2} for continuous $\psi_2$ is a standard `approximation of identity' argument. Indeed, for a fixed $\bx$, write
	\begin{equation}\begin{split}
			(\psi_{1,R} & *\psi_2)(\bx) -  \psi_2(\bx)\int_{\mathbb{R}^d}\psi_1(\by)\rd{\by} \\
			= & \int_{\mathbb{R}^d} R^d\psi_1(R\by)\psi_2(\bx-\by)\rd{\by} - \psi_2(\bx)\int_{\mathbb{R}^d}R^d\psi_1(R\by)\rd{\by} \\
			= & \int_{|\by|< 1/\sqrt{R}}R^d\psi_1(R\by)(\psi_2(\bx-\by)-\psi_2(\bx)) \rd{\by} \\
			& + \int_{|\by|\ge 1/\sqrt{R}}R^d\psi_1(R\by)(\psi_2(\bx-\by)-\psi_2(\bx)) \rd{\by}\,. \\
	\end{split}\end{equation}
	We first estimate
	\begin{equation}\begin{split}
			\left| \int_{|\by|< 1/\sqrt{R}}R^d\psi_1(R\by)(\psi_2(\bx-\by)-\psi_2(\bx)) \rd{\by}   \right| 
			\le  \|\psi_1\|_{L^1}\sup_{\bz\in B(\bx;1/\sqrt{R})} |\psi_2(\bz)-\psi_2(\bx)|\,,
	\end{split}\end{equation}
	which goes to zero as $R\rightarrow\infty$, by the continuity of $\psi_2$. Then notice that \eqref{lem_psi_0} and the continuity of $\psi_2$ imply that $\psi_2(\bz) \lesssim (1+|\bz|)^M$ for any $\bz\in\mathbb{R}^d$. Recalling that $\bx$ is fixed, this allows us to estimate
	\begin{equation}\begin{split}
			& \left| \int_{|\by|\ge 1/\sqrt{R}}R^d  \psi_1(R\by)(\psi_2(\bx-\by)-\psi_2(\bx)) \rd{\by}   \right| \\
			\lesssim & \int_{|\by|\ge 1/\sqrt{R}}R^d|\psi_1(R\by)| (1+|\by|)^M \rd{\by} \\
			\lesssim & C_N\int_{|\by|\ge 1/\sqrt{R}}R^d(1+R|\by|)^{-N} (1+|\by|)^M \rd{\by} \\
			= & C_N\int_{|\by|\ge \sqrt{R}}(1+|\by|)^{-N} (1+|\frac{\by}{R}|)^M \rd{\by} \,.\\
	\end{split}\end{equation}
	Notice that $|\frac{\by}{R}| \le |\by|$ for any $R\ge 1$, and thus $(1+|\frac{\by}{R}|)^M \le (1+|\by|)^{\max\{M,0\}}$. Therefore, by choosing $N = d+1+\max\{M,0\}$, we obtain
	\begin{equation}\begin{split}
			& \left| \int_{|\by|\ge 1/\sqrt{R}}R^d  \psi_1(R\by)(\psi_2(\bx-\by)-\psi_2(\bx)) \rd{\by}   \right| 
			\lesssim  \int_{|\by|\ge \sqrt{R}}(1+|\by|)^{-d-1}  \rd{\by} \lesssim R^{-1/2}\,, \\
	\end{split}\end{equation}
	which also goes to zero as $R\rightarrow\infty$. Therefore \eqref{lem_psi_2} follows.
\end{proof}

\color{black}
\begin{proof}[Proof of Theorem \ref{thm_convFou1}]
	Since $W$ is a tempered distribution, $\hat{W}$ is well-defined as a tempered distribution. Let $\mu$ be a compactly supported signed measure with $E[|\mu|]<\infty$. We take a mollifier $\phi_\epsilon$ and a truncation function $\Phi_R$ (see the last paragraph of Section \ref{sec_intro}). Define
	\begin{equation}\label{fepsR1}
		f_{\epsilon,R} = \int_{\mathbb{R}^d} ((W\Phi_R)*(\mu*\phi_\epsilon))(\mu*\phi_\epsilon)\rd{\bx}\,.
	\end{equation}
	Notice that both $(W\Phi_R)*(\mu*\phi_\epsilon)$ and $\mu*\phi_\epsilon$ are $L^2$ functions. Therefore we may apply the Plancherel identity to get
	\begin{equation}\label{fepsR2}\begin{split}
		f_{\epsilon,R} = & \int_{\mathbb{R}^d} \cF((W\Phi_R)*(\mu*\phi_\epsilon))\cdot \overline{\cF(\mu*\phi_\epsilon)}\rd{\xi} \\
		= & \int_{\mathbb{R}^d}(\hat{W}*\hat{\Phi}_R)|\hat{\mu}|^2|\hat{\phi}_\epsilon|^2\rd{\xi} = \int_{\mathbb{R}^d}\hat{W}\big((|\hat{\mu}|^2|\hat{\phi}_\epsilon|^2) *\hat{\Phi}_R\big)\rd{\xi}\,.
	\end{split}\end{equation}
	We first fix $\epsilon$, and notice that $f_{\epsilon,R}$ is constant in $R$ for sufficiently large $R$, and equal to
	\begin{equation}\label{feps1}
		f_{\epsilon} = \int_{\mathbb{R}^d} (W*(\mu*\phi_\epsilon))(\mu*\phi_\epsilon)\rd{\bx}\,,
	\end{equation}
	since $\mu*\phi_\epsilon$ is compactly supported. Then, in \eqref{fepsR2} we send $R\rightarrow\infty$. By assumption $\hat{W}$ is a locally integrable function which is also a tempered distribution, and thus 
	
	\begin{equation}\label{hatWb}
		\int_{B(0;1)^c}|\hat{W}(\xi)|\cdot|\xi|^{-k}\rd{\bx}<\infty\,,
	\end{equation}
	for sufficiently large $k$. 
	
	Now we treat the convolution on the RHS of \eqref{fepsR2}. Notice that $|\hat{\mu}|^2|\hat{\phi}_\epsilon|^2$ is a Schwartz function, and $\hat{\Phi}_R(\xi) = R^d \hat{\Phi}(R\xi)$, with $\hat{\Phi}$ a Schwartz function. Applying \eqref{lem_psi_1} in Lemma \ref{lem_psi} with $\psi_1 = \hat{\Phi}$ and $\psi_2 = |\hat{\mu}|^2|\hat{\phi}_\epsilon|^2$ with $M=-k$ ($k$ as given in \eqref{hatWb}), we see that 
	\begin{equation}
		\big((|\hat{\mu}|^2|\hat{\phi}_\epsilon|^2) *\hat{\Phi}_R\big)(\xi) \lesssim (1+|\xi|)^{-k},\quad \forall |\xi|>3,\quad \forall R\ge 1\,.
	\end{equation}
	This inequality also holds for $|\xi|\le 3$ by Young's inequality, since $ |\hat{\mu}|^2|\hat{\phi}_\epsilon|^2\in L^\infty$ and $\|\hat{\Phi}_R\|_{L^1}=\|\hat{\Phi}\|_{L^1}<\infty$. Therefore we see that the last integrand $\hat{W}\big((|\hat{\mu}|^2|\hat{\phi}_\epsilon|^2) *\hat{\Phi}_R\big)$ of \eqref{fepsR2} is dominated by an integrable function $C|\hat{W}(\xi)|(1+|\xi|)^{-k}$ for any $R\ge 1$. 
	
	Applying \eqref{lem_psi_2} in Lemma \ref{lem_psi}, combining with $\int_{\mathbb{R}^d} \hat{\Phi}\rd{\xi}= \Phi(0)=1$, gives $\lim_{R\rightarrow\infty}\big((|\hat{\mu}|^2|\hat{\phi}_\epsilon|^2) *\hat{\Phi}_R\big)(\xi) = |\hat{\mu}(\xi)|^2|\hat{\phi}_\epsilon(\xi)|^2$ for every $\xi$. Therefore, applying the dominated convergence theorem to \eqref{fepsR2}, we see that
	\begin{equation}\label{feps2}
		f_{\epsilon} = \int_{\mathbb{R}^d}\hat{W}|\hat{\mu}|^2|\hat{\phi}_\epsilon|^2\rd{\xi}\,.
	\end{equation}
	
	\color{black}
	
	Then we send $\epsilon\rightarrow 0$. In \eqref{feps1}, since $W$ satisfies {\bf (W1)} and $E[|\mu|]<\infty$, we apply Lemma \ref{lem_W1} to get
	\begin{equation}
		\lim_{\epsilon\rightarrow 0}f_{\epsilon} = \int_{\mathbb{R}^d} (W*\mu)\mu\rd{\bx} \,.
	\end{equation}

	In \eqref{feps2}, notice that $|\hat{\mu}|^2\in L^\infty$, and $\hat{\phi}_\epsilon=\hat{\phi}(\epsilon\cdot)$ is uniformly bounded and converges uniformly to 1 on compact sets. Therefore, if $\hat{W}\in L^1(\mathbb{R}^d)$, then the integrand in \eqref{feps2} is dominated by an integrable function $C|\hat{W}(\xi)|$ for any $\epsilon>0$. Therefore, by the dominated convergence theorem, we get
	\begin{equation}\label{phiepsconv}
		\lim_{\epsilon\rightarrow 0}\int_{\mathbb{R}^d}\hat{W}|\hat{\mu}|^2|\hat{\phi}_\epsilon|^2\rd{\xi} = \int_{\mathbb{R}^d}\hat{W}|\hat{\mu}|^2\rd{\xi} \,,
	\end{equation}
	 and thus \eqref{EFou} is proved.
	
	If we assume that $\hat{W}$ is one-signed on $B(0;R_0)^c$ for some $R_0>0$, then we may still apply  the dominated convergence theorem on  $B(0;R_0)$ to get
	\begin{equation}
		\lim_{\epsilon\rightarrow 0}\int_{B(0;R_0)}\hat{W}|\hat{\mu}|^2|\hat{\phi}_\epsilon|^2\rd{\xi} = \int_{B(0;R_0)}\hat{W}|\hat{\mu}|^2\rd{\xi} \,.
	\end{equation}
	Next, using the assumption that $\hat{W}$ is one-signed on $B(0;R_0)^c$, we may obtain that
	\begin{equation}\label{R0converge}
		\lim_{\epsilon\rightarrow 	0}\int_{B(0;R_0)^c}\hat{W}|\hat{\mu}|^2|\hat{\phi}_\epsilon|^2\rd{\xi} = \int_{B(0;R_0)^c}\hat{W}|\hat{\mu}|^2\rd{\xi} \,.
	\end{equation}
	To see this, we first consider the case $\hat{W}\ge 0$ on $B(0;R_0)^c$. If  $\int_{B(0;R_0)^c}\hat{W}|\hat{\mu}|^2\rd{\xi}=\infty$, then one clearly has $\lim_{\epsilon\rightarrow 	0}\int_{B(0;R_0)^c}\hat{W}|\hat{\mu}|^2|\hat{\phi}_\epsilon|^2\rd{\xi}=\infty$ since $\hat{W}|\hat{\mu}|^2|\hat{\phi}_\epsilon|^2\ge 0$ and $\hat{\phi}_\epsilon$ converges uniformly to 1 on compact sets. If $\int_{B(0;R_0)^c}\hat{W}|\hat{\mu}|^2\rd{\xi}<\infty$ then \eqref{R0converge} follows from the dominated convergence theorem. The case that $\hat{W}\le 0$ on $B(0;R_0)^c$ can be treated similarly. Therefore we again obtain \eqref{phiepsconv} and thus \eqref{EFou} is proved.
\color{black}

\end{proof}

\subsection{Fourier representable potentials at levels 1 and 2}

Then we use the Laplacian operator to study the Fourier representable property at higher levels, similar to \cite[Theorem 5.3]{CS21}.

\begin{theorem}\label{thm_convFou2}
	Assume $W$ is a tempered distribution satisfying {\bf (W0-w)}{\bf (W1)}, and $-\Delta W$ satisfies the assumptions of Theorem \ref{thm_convFou1}. Assume 
	\begin{equation}\label{thm_convFou2_1}
		|\nabla W(\bx)|\lesssim |\bx|^{b-1},\quad |\Delta W(\bx)|\lesssim |\bx|^{b-2}\,,
	\end{equation}
	for some $b < 2$ and large $|\bx|$. Then $W$ is Fourier representable at level 1.
\end{theorem}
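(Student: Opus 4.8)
The plan is to reduce the level-1 statement to the level-0 result of Theorem \ref{thm_convFou1} applied to the potential $V := -\Delta W$. The key algebraic identity is that for a compactly supported signed measure $\mu$ with $\int_{\mathbb{R}^d}\rd{\mu} = 0$ (the level-1 condition), one has $\widehat{\mu}(\xi) = O(|\xi|)$ near the origin, so the factor $|\xi|^2$ coming from $-\Delta$ is exactly absorbed. Concretely, I would introduce an auxiliary measure $\nu$ built from $\mu$ so that $E_W[\mu]$ (the energy for $W$) equals $E_V[\nu]$ (the energy for $V$) up to a harmless constant. The natural candidate is to write the level-1 mean-zero measure $\mu$ as $\mu = \nabla\cdot\vec{m}$ for some compactly supported vector-valued measure $\vec{m}$ (possible precisely because $\mu$ has total mass zero — integrate the one-dimensional marginals), and then the second-order structure $-\Delta W$ pairs against $\vec{m}$ in a way that reproduces $W$ paired against $\mu$. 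This is the analogue of \cite[Theorem 5.3]{CS21}.

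The key steps, in order, would be: (i) First establish the decomposition $\mu = \sum_{j=1}^d \partial_j m_j$ with each $m_j$ a compactly supported signed measure, using the mean-zero condition; one can take $m_j$ via iterated partial integration of $\mu$ in the coordinate directions, with supports contained in a fixed ball. (ii) Verify the energy identity: for mollified versions $\mu * \phi_\epsilon$ one has, by Plancherel and the identity $|\xi|^2 = \sum_j \xi_j^2$,
\begin{equation}
	\int_{\mathbb{R}^d\setminus\{0\}} \hat{W}(\xi)\,|\widehat{\mu*\phi_\epsilon}(\xi)|^2\rd{\xi} = \int_{\mathbb{R}^d\setminus\{0\}} \widehat{(-\Delta W)}(\xi)\,\Big|\tfrac{\widehat{\mu*\phi_\epsilon}(\xi)}{|\xi|}\Big|^2\rd{\xi},
\end{equation}
and the right-hand side is the Fourier-side energy for $V=-\Delta W$ evaluated on the measure-like object with transform $\widehat{\mu}(\xi)/|\xi|$, which by step (i) corresponds (at the level of $L^2$ functions after mollification) to a genuine compactly supported finite measure with finite $V$-energy. (iii) Check that $V = -\Delta W$ satisfies the hypotheses of Theorem \ref{thm_convFou1}: it is a tempered distribution (since $W$ is), $\widehat{V}(\xi) = |\xi|^2 \widehat{W}(\xi)$ is locally integrable on $\mathbb{R}^d\setminus\{0\}$ by assumption, and one must confirm {\bf (W0-w)} and {\bf (W1)} for $V$ — here the growth bounds \eqref{thm_convFou2_1} with $b<2$ are what guarantee local integrability and the {\bf (W1)}-type regulated-singularity bound for $-\Delta W$ near both $0$ and $\infty$. (iv) Finally pass to the limit $\epsilon \to 0$ using Lemma \ref{lem_W1} on both sides (for $W$ and for $V$), and invoke Theorem \ref{thm_convFou1} for $V$ to conclude \eqref{EFou} for $W$ on all level-1 measures.

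The main obstacle I anticipate is step (ii)/(iv): making rigorous the claim that the object with Fourier transform $\widehat{\mu}(\xi)/|\xi|$ is (or is well-approximated by) an admissible measure with finite $V$-energy, and controlling the behavior near $\xi = 0$ uniformly in $\epsilon$. The decomposition in step (i) is the clean way around this — it produces honest compactly supported measures $m_j$, but then one must verify $E_V[|m_j|] < \infty$ (so that Theorem \ref{thm_convFou1} applies to the $m_j$), which requires translating the finiteness of $E_W[|\mu|]$ plus the growth bounds \eqref{thm_convFou2_1} into finiteness of the $V$-energy of the $m_j$; the singularity of $\widehat{V}$ can be worse than that of $\widehat{W}$ by two powers, so one leans on $b<2$ precisely here. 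A secondary technical point is that the cross terms $\partial_i \partial_j$ (for $i \neq j$) arising from $-\Delta$ acting through the decomposition must be shown to integrate to zero or to reassemble correctly; this is where the symmetry $W(\bx) = W(-\bx)$ and integration by parts on the mollified level are used. I would handle all of this at the mollified level first, where everything is a smooth Schwartz-class computation, and only then remove the mollification via the two lemmas.
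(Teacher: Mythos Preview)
Your overall strategy---reduce to Theorem \ref{thm_convFou1} applied to $V=-\Delta W$---is the right one and matches the paper. But there is a genuine gap in how you implement it.

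The problem is in steps (i)--(ii). If you write $\mu=\sum_j\partial_j m_j$ with \emph{compactly supported} $m_j$ obtained by iterated integration, then integrating by parts twice gives
\[
\int_{\mathbb{R}^d}(W*\mu)\mu\rd{\bx}=-\sum_{j,k}\int_{\mathbb{R}^d}(\partial_j\partial_k W*m_j)\,m_k\rd{\bx}\,,
\]
which involves the \emph{full Hessian} $\partial_j\partial_k W$, not just $-\Delta W$. Applying Theorem \ref{thm_convFou1} to $-\Delta W$ and the $m_j$ (even with polarization for cross terms) yields $\int 4\pi^2|\xi|^2\hat{W}\,\hat m_j\overline{\hat m_k}\rd{\xi}$, whereas what you need is $\int 4\pi^2\xi_j\xi_k\hat{W}\,\hat m_j\overline{\hat m_k}\rd{\xi}$. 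These coincide only if $\hat{\vec m}(\xi)$ is parallel to $\xi$, i.e.\ $\vec m$ is a gradient field---which your iterated-integration construction does not give. The evenness $W(\bx)=W(-\bx)$ does not make the off-diagonal Hessian terms vanish. Relatedly, your sentence ``the object with transform $\hat\mu(\xi)/|\xi|$ \dots\ corresponds to a genuine compactly supported finite measure'' is false: that object is $(-\Delta)^{-1/2}\mu$, a fractional integral which is not compactly supported and is not any of your $m_j$.

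The paper resolves this by taking $\vec m=\nabla g$ with $g=-\Delta^{-1}\mu$ the Newtonian potential. Then $\vec m$ \emph{is} a gradient, so the Hessian collapses to $-\Delta W$ via the identity $\Delta W*\partial_j g=-\partial_j W*\mu$ (proved by shuffling derivatives between the two identical factors $g$). The price is that $\partial_j g$ is not compactly supported---only $|\nabla g(\bx)|\lesssim|\bx|^{-d}$ from the mean-zero condition on $\mu$---so the paper applies Theorem \ref{thm_convFou1} to the truncations $\Phi_R\partial_j g$ and sends $R\to\infty$. The growth hypotheses \eqref{thm_convFou2_1} are exactly what justify this limit (they give $|(\Delta W*\partial_j g)\partial_j g|\lesssim|\bx|^{b-2-d}$ with $b<2$). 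All of this is done first for smooth $\mu$, with the general case handled by mollification at the end, as you anticipated.
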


{
We first give two lemmas on the convolution of a function with a compactly supported measure with vanishing moments.
\begin{lemma}\label{lem_convvan}
	Let $\mu$ be a compactly supported signed measure with $\int_{\mathbb{R}^d}\rd{\mu}=0$, and $f$ be a $C^1$ function on $\mathbb{R}^d\backslash \{0\}$ satisfying 
	\begin{equation}\label{lem_convvan_1}
		|\nabla f(\bx)| \lesssim |\bx|^p\,,
	\end{equation}
	for sufficiently large $|\bx|$, for some $p\in \mathbb{R}$. Then
	\begin{equation}
		|(f*\mu)(\bx)| \lesssim |\bx|^p\,,
	\end{equation}
	for sufficiently large $|\bx|$.
\end{lemma}
\begin{proof}
	Let $R$ be sufficiently large so that $\supp\mu\subset B(0;R)$, and \eqref{lem_convvan_1} is valid for $|\bx|\ge R$. For any $\bx$ with $|\bx|\ge 2R$, we write
	\begin{equation}
		(f*\mu)(\bx) = \int_{\supp\mu} f(\bx-\by)\rd{\mu(\by)} = \int_{\supp\mu} (f(\bx-\by)-f(\bx))\rd{\mu(\by)}\,.
	\end{equation}
	For each $\by\in\supp\mu$, applying the mean value theorem to $t\mapsto f(\bx-t\by)$ on $t\in [0,1]$ gives
	\begin{equation}
		f(\bx-\by)-f(\bx) = -\nabla f(\bx-t_\by\by) \cdot \by \,,
	\end{equation}
	for some $t_\by\in (0,1)$ (depending on $\by$). Since $|\bx|\ge 2R$ and $|\by|\le R$, we have $\frac{|\bx|}{2}\le |\bx-t_\by\by|\le 2|\bx|$, which implies  $|\nabla f(\bx-t_\by\by)| \lesssim |\bx|^p$. Therefore we conclude that
	\begin{equation}
		|(f*\mu)(\bx)| \lesssim |\bx|^p \int_{\supp\mu} |\by| \rd{|\mu|(\by)} \lesssim |\bx|^p\,.
	\end{equation}
\end{proof}
\begin{lemma}\label{lem_convvan2}
	Let $\mu$ be a compactly supported signed measure with $\int_{\mathbb{R}^d}\rd{\mu}=\int_{\mathbb{R}^d}\bx\rd{\mu(\bx)}=0$, and $f$ be a $C^2$ function on $\mathbb{R}^d\backslash \{0\}$ satisfying 
	\begin{equation}\label{lem_convvan2_1}
		|\nabla^2 f(\bx)| \lesssim |\bx|^p\,,
	\end{equation}
	for sufficiently large $|\bx|$, for some $p\in \mathbb{R}$. Then
	\begin{equation}
		|(f*\mu)(\bx)| \lesssim |\bx|^p\,,
	\end{equation}
	for sufficiently large $|\bx|$.
\end{lemma}
\begin{proof}
	This lemma can be proved similarly as the previous one, by using
	\begin{equation}
		(f*\mu)(\bx) =  \int_{\supp\mu} (f(\bx-\by)-f(\bx) + \nabla f(\bx)\cdot \by)\rd{\mu(\by)}\,,
	\end{equation}
	and applying Taylor's formula to $t\mapsto f(\bx-t\by)$ to get
	\begin{equation}
		f(\bx-\by) = f(\bx) - \nabla f(\bx)\cdot \by + \frac{1}{2}\by^\top \nabla^2 f(\bx-t_\by \by)\by\,.
	\end{equation}	
\end{proof}
}

\begin{proof}[Proof of Theorem \ref{thm_convFou2}]
	Let $\mu$ be a compactly supported signed measure with $E[|\mu|]<\infty$ and $\int_{\mathbb{R}^d}\rd{\mu(\bx)}=0$. We first assume $\mu$ is a smooth function. Denote 
	\begin{equation}
		g = -\Delta^{-1}\mu = -\frac{1}{|S^{d-1}|} \cdot \frac{|\bx|^{2-d}}{2-d}*\mu\,.
	\end{equation}
	Then $g$ is a locally integrable smooth function satisfying
	\begin{equation}
		|g(\bx)| \lesssim |\bx|^{1-d},\quad |\nabla g(\bx)| \lesssim |\bx|^{-d}\,,
	\end{equation}
	for large $|\bx|$, due to $\int_{\mathbb{R}^d}\rd{\mu(\bx)}=0$. {Indeed, these two estimates follow by applying Lemma \ref{lem_convvan} with $f$ being $|\bx|^{2-d}$ and $\nabla|\bx|^{2-d}=C\bx|\bx|^{-d}$, respectively.}
	
	We also have
	\begin{equation}
		|(W*\mu)(\bx)|\lesssim |\bx|^{b-1}\,,
	\end{equation}
	due to $|\nabla W(\bx)|\lesssim |\bx|^{b-1}$ in \eqref{thm_convFou2_1} and $\int_{\mathbb{R}^d}\rd{\mu(\bx)}=0$, {in view of Lemma \ref{lem_convvan}}.
	
	Then we claim that
	\begin{equation}
		\int_{\mathbb{R}^d} (W*\mu)\mu\rd{\bx} =  -\int_{\mathbb{R}^d} (\Delta W*\nabla g)\cdot\nabla g\rd{\bx}\,,
	\end{equation}
	where the LHS is well-defined since $E[|\mu|]<\infty$. To see this, we start by
	\begin{equation}\begin{split}
			(\Delta W*\partial_j g )(\bx)= & \int_{\mathbb{R}^d} \Delta W(\bx-\by)\partial_j g(\by)\rd{\by} = \int_{\mathbb{R}^d} \nabla W(\bx-\by)\cdot\nabla \partial_j g(\by)\rd{\by} \\
			= & \int_{\mathbb{R}^d} \partial_j\nabla W(\bx-\by)\cdot\nabla  g(\by)\rd{\by} =  \int_{\mathbb{R}^d} \partial_jW(\bx-\by)\Delta  g(\by)\rd{\by}  \\
			=&  -\int_{\mathbb{R}^d} \partial_j W(\bx-\by)\mu(\by)\rd{\by}\,, \\
	\end{split}\end{equation}
	where the integration by parts (three times) are justified by using the estimate $|\nabla W(\bx-\by) \otimes\nabla g(\by)| \lesssim |\by|^{b-1}\cdot |\by|^{-d},\,b-1-d<-d+1$ for fixed $\bx$ and large $|\by|$. {Indeed, the first integration by parts can be justified by
	\begin{equation}\begin{split}
		\int_{\mathbb{R}^d} & \Delta W(\bx-\by)\partial_j g(\by)\rd{\by} = 
		\lim_{R\rightarrow\infty} \int_{B(0;R)} \Delta W(\bx-\by)\partial_j g(\by)\rd{\by} \\
		= &  \lim_{R\rightarrow\infty} -\int_{\partial B(0;R)} \nabla W(\bx-\by) \partial_j g(\by) \cdot {\bf n}\rd{S(\by)}  + \int_{B(0;R)} \nabla  W(\bx-\by)\cdot \partial_j \nabla g(\by)\rd{\by}\,.
	\end{split}\end{equation}
	Here, $\nabla W(\bx-\by) \partial_j g(\by)=O(R^{b-1-d})$, and thus
	\begin{equation}
		\int_{\partial B(0;R)} \nabla W(\bx-\by) \partial_j g(\by) \cdot {\bf n}\rd{S(\by)} = O(R^{b-1-d + (d-1)})\,,
	\end{equation} 
	which vanishes in the limit $R\rightarrow\infty$. Therefore this integration by parts is justified. The other two integration by parts can be justified similarly.
}
	
	As a consequence, we obtain
	\begin{equation}
		\Delta W*\nabla g = -\nabla W * \mu\,.
	\end{equation}
	Finally we integrate by parts to get
	\begin{equation}
		\int_{\mathbb{R}^d} (\nabla W * \mu)\cdot\nabla g\rd{\bx} = -\int_{\mathbb{R}^d} ( W * \mu)\Delta g\rd{\bx}= \int_{\mathbb{R}^d} ( W * \mu)\mu\rd{\bx}\,,
	\end{equation}
	using the estimate $|(W*\mu)(\bx)\nabla g(\bx)| \lesssim |\bx|^{b-1}\cdot |\bx|^{-d}$. This proves the claim.
	
	Then, applying Theorem \ref{thm_convFou1} to the potential $-\Delta W$ and the compactly supported signed measure $\Phi_R\partial_j g$ where $\Phi_R$ is a truncation function, we get
	\begin{equation}\begin{split}
			-\int_{\mathbb{R}^d} (\Delta W*(\Phi_R\partial_j g))\cdot(\Phi_R\partial_j g)\rd{\bx} = & \int_{\mathbb{R}^d\backslash\{0\}} \cF[-\Delta W](\xi) |\cF[\Phi_R\partial_j g]|^2\rd{\xi} \\
			= & \int_{\mathbb{R}^d\backslash\{0\}} \cF[-\Delta W](\xi) |2\pi\xi_j\hat{g}*\hat{\Phi}_R|^2\rd{\xi}\,. \\
	\end{split}\end{equation}
	Using $|\nabla g(\bx)|\lesssim|\bx|^{-d}$ and $|\Delta W(\bx)|\lesssim |\bx|^{b-2}$ in \eqref{thm_convFou2_1}, one can justify that the LHS converges to $-\int_{\mathbb{R}^d} (\Delta W*\partial_j g)\cdot\partial_j g\rd{\bx}$ as $R\rightarrow\infty$. On the RHS, notice that {the Fourier transform of the relation $-\Delta g = \mu$ gives $4\pi^2|\xi|^2 \hat{g} = \hat{\mu}$, which leads to}
	\begin{equation}
		2\pi\xi_j\hat{g} = \frac{1}{2\pi}\cdot\frac{\xi_j}{|\xi|^2}\hat{\mu},\quad \xi\ne 0\,,
	\end{equation}
	and $\hat{\mu}$ is a smooth function with rapid decay at infinity, with $\hat{\mu}(0)=0$. Therefore $\hat{\mu}(\xi)\approx \nabla \hat{\mu}(0)\cdot \xi$ near 0, and thus $2\pi\xi_j\hat{g}$ is a bounded function, smooth on $\mathbb{R}^d\backslash \{0\}$, with rapid decay at infinity. {Then, since $-\Delta W$ satisfies the assumptions of Theorem \ref{thm_convFou1},  $\cF[-\Delta W]$ satisfies $\int_{B(0;1)^c}|\cF[-\Delta W](\xi)|\cdot |\xi|^{-k}\rd{\bx}<\infty$ for sufficiently large $k$}, as discussed in \eqref{hatWb}. Therefore, {we may proceed similarly as in the proof of Theorem \ref{thm_convFou1} to obtain}
	\begin{equation}\begin{split}
			\lim_{R\rightarrow\infty} & \int_{\mathbb{R}^d\backslash\{0\}} \cF[-\Delta W](\xi) |2\pi\xi_j\hat{g}*\hat{\Phi}_R|^2\rd{\xi} =  \int_{\mathbb{R}^d\backslash\{0\}} \cF[-\Delta W](\xi) |2\pi\xi_j\hat{g}|^2\rd{\xi} \\
			= & \int_{\mathbb{R}^d\backslash\{0\}}4\pi^2|\xi|^2\hat{W}(\xi) \Big|\frac{1}{2\pi}\cdot\frac{\xi_j}{|\xi|^2}\hat{\mu}(\xi)\Big|^2\rd{\xi} 
			=  \int_{\mathbb{R}^d\backslash\{0\}}\frac{\xi_j^2}{|\xi|^2}\hat{W}(\xi) |\hat{\mu}(\xi)|^2\rd{\xi}\,. \\
	\end{split}\end{equation}
	Then summing in $j$, the last quantity becomes $\int_{\mathbb{R}^d\backslash\{0\}}\hat{W}(\xi) |\hat{\mu}(\xi)|^2\rd{\xi}$, which finishes the proof of \eqref{EFou} in case $\mu$ is a smooth function.
	
	{
	For a general signed measure $\mu$, one can first mollify it into $\mu*\phi_\epsilon$, so that the latter satisfies \eqref{EFou}, i.e., 
	\begin{equation}
		\int_{\mathbb{R}^d} (W*(\mu*\phi_\epsilon))(\mu*\phi_\epsilon)\rd{\bx} = \int_{\mathbb{R}^d\backslash\{0\}}\hat{W}|\hat{\mu}|^2|\hat{\phi}_\epsilon|^2\rd{\xi}.
	\end{equation}
	Then sending $\epsilon\rightarrow 0$, similar to the proof of Theorem \ref{thm_convFou1}, we first use {\bf (W1)} to derive that $\lim_{\epsilon\rightarrow0}\int_{\mathbb{R}^d} (W*(\mu*\phi_\epsilon))(\mu*\phi_\epsilon)\rd{\bx} = \int_{\mathbb{R}^d} (W*\mu)\mu\rd{\bx}$. Then, since $\hat{\mu}(\xi)\approx \nabla \hat{\mu}(0)\cdot \xi$ near 0 and we assumed that  $-\Delta W$ satisfies the assumptions of Theorem \ref{thm_convFou1}, the function $\hat{W}|\hat{\mu}|^2 = \cF[-\Delta W]\frac{1}{4\pi^2|\xi|^2}|\hat{\mu}|^2$ is locally integrable on $\mathbb{R}^d$. If $\cF[-\Delta W]\in L^1(\mathbb{R}^d)$ or $\cF[-\Delta W]$ is one-signed on $B(0;R_0)^c$, then the same is true for $\hat{W}|\hat{\mu}|^2$. Therefore we derive $\lim_{\epsilon\rightarrow 0}\int_{\mathbb{R}^d}\hat{W}|\hat{\mu}|^2|\hat{\phi}_\epsilon|^2\rd{\xi} = \int_{\mathbb{R}^d}\hat{W}|\hat{\mu}|^2\rd{\xi}$ in the same way as in the proof of Theorem \ref{thm_convFou1}, and thus obtain \eqref{EFou} for $\mu$.
}
	
\end{proof}

\begin{theorem}\label{thm_convFou4}
	Assume $W$ satisfies {\bf (W0-w)}{\bf (W1)}, and $\Delta^2 W$ satisfies the assumptions of Theorem \ref{thm_convFou1}. Assume 
	\begin{equation}\label{thm_convFou4_1}
		|\nabla W(\bx)|\lesssim |\bx|^{b-1},\quad |\nabla^2 W(\bx)|\lesssim |\bx|^{b-2},\quad |\nabla^3 W(\bx)|\lesssim |\bx|^{b-3},\quad |\Delta^2 W(\bx)|\lesssim |\bx|^{b-4}\,,
	\end{equation}
	for some $b < 4$ and large $|\bx|$. Then $W$ is Fourier representable at level 2.
\end{theorem}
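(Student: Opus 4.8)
The plan is to follow the proof of Theorem~\ref{thm_convFou2}, with the operator $-\Delta$ replaced by $\Delta^2$ and the Newtonian potential by the fundamental solution $\Phi_0$ of the bi-Laplacian. Fix a compactly supported signed measure $\mu$ with $E[|\mu|]<\infty$, $\int_{\mathbb{R}^d}\rd{\mu(\bx)}=0$ and $\int_{\mathbb{R}^d}\bx\rd{\mu(\bx)}=0$. Replacing $\mu$ by the mollification $\mu*\phi_\epsilon$ — which keeps the same two vanishing moments — and letting $\epsilon\to0$ with the help of {\bf (W1)} and Lemma~\ref{lem_W1}, exactly as at the ends of the proofs of Theorems~\ref{thm_convFou1} and~\ref{thm_convFou2}, it suffices to prove \eqref{EFou} for smooth $\mu$. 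Throughout, $\hat W$ is understood on $\mathbb{R}^d\backslash\{0\}$ as $\cF[\Delta^2 W]/(2\pi|\xi|)^4$, a locally integrable function there. For smooth $\mu$, put $h=\Phi_0*\mu$, so $\Delta^2 h=\mu$; since $\Phi_0$ behaves like $|\bx|^{4-d}$ for $d\ge5$ (with logarithmic modifications for $d\le4$) and the first two moments of $\mu$ vanish, a Taylor expansion yields $|\nabla^k h(\bx)|\lesssim|\bx|^{2-d-k}$ for large $|\bx|$ and every $k\ge1$ (at most an extra logarithm for $k=0$ when $d=2$, which will never enter the estimates), while $\Delta^2 h=\mu$ has compact support.

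The core of the argument is the pointwise identity
\begin{equation}\label{plan_iden}
	\Delta^2 W*(\partial_i\partial_j h)=(\partial_i\partial_j W)*\mu,\qquad i,j=1,\dots,d,
\end{equation}
obtained by transferring the four derivatives of $\Delta^2 W$ onto $h$ through a chain of integrations by parts in the convolution variable: move the outer Laplacian (two steps) to reach $\Delta W*(\partial_i\partial_j\Delta h)$, then the inner one (two steps) to reach $W*(\partial_i\partial_j\Delta^2 h)=W*(\partial_i\partial_j\mu)$, and finally return the $\partial_i\partial_j$ onto $W$, which is free because $\mu$ has compact support. If the Laplacians are transferred as whole Laplacians (never split into iterated single derivatives), the only derivatives of $W$ that ever appear are $\nabla W$, $\Delta W$, $\nabla\Delta W$ and $\Delta^2 W$, so \eqref{thm_convFou4_1} supplies every bound needed; each boundary integral over $\partial B(0;R)$ is a product $|\nabla^m W|\,|\nabla^{5-m}h|\,R^{d-1}$ with $0\le m\le4$, and using \eqref{thm_convFou4_1} together with $|\nabla^{5-m}h|\lesssim|\bx|^{2-d-(5-m)}$ each such product is of order $R^{b-4}$ (for $m=0$ it is even smaller), which tends to $0$ precisely because $b<4$. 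Using \eqref{plan_iden}, two further integrations by parts in the outer variable, $\sum_j\partial_j^2(\Delta h)=\Delta^2 h=\mu$, and the fact that the two vanishing moments of $\mu$ make $W*\mu$ and $\nabla(W*\mu)$ decay like $\nabla^2 W$ and $\nabla^3 W$, one gets
\begin{equation}\label{plan_real}\begin{split}
	\sum_{i,j}\int_{\mathbb{R}^d}\big(\Delta^2 W*(\partial_i\partial_j h)\big)(\partial_i\partial_j h)\rd{\bx} = & \sum_{i,j}\int_{\mathbb{R}^d}\big((\partial_i\partial_j W)*\mu\big)(\partial_i\partial_j h)\rd{\bx} \\
	= & \int_{\mathbb{R}^d}(W*\mu)\mu\rd{\bx} = 2E[\mu]\,,
\end{split}\end{equation}
again with all boundary terms of order $R^{b-4}\to0$.

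It remains to evaluate the left side of \eqref{plan_real} on the Fourier side. With a truncation function $\Phi_R$, apply Theorem~\ref{thm_convFou1} to the potential $\Delta^2 W$ (which satisfies its hypotheses by assumption) and to the compactly supported smooth function $\Phi_R\,\partial_i\partial_j h$:
\begin{equation}\label{plan_Pl}
	\int_{\mathbb{R}^d}\big(\Delta^2 W*(\Phi_R\partial_i\partial_j h)\big)(\Phi_R\partial_i\partial_j h)\rd{\bx}=\int_{\mathbb{R}^d\backslash\{0\}}\cF[\Delta^2 W](\xi)\,\big|\cF[\Phi_R\partial_i\partial_j h](\xi)\big|^2\rd{\xi}\,.
\end{equation}
Since $\hat\mu$ is smooth with rapid decay and, by the two vanishing moments, $\hat\mu(\xi)=O(|\xi|^2)$ near $0$, the function $\cF[\partial_i\partial_j h]=-(2\pi)^2\xi_i\xi_j\hat h=-(2\pi)^{-2}\frac{\xi_i\xi_j}{|\xi|^4}\hat\mu$ is bounded, smooth on $\mathbb{R}^d\backslash\{0\}$, and rapidly decaying. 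Letting $R\to\infty$ in \eqref{plan_Pl} — using the decay bounds on $h$ to pass to the limit on the left as in the proof of Theorem~\ref{thm_convFou2}, and on the right the local integrability of $\cF[\Delta^2 W]$ together with $\{\hat\Phi_R\}$ being an approximation of the identity, as around \eqref{hatWb} — and using $\cF[\Delta^2 W]=(2\pi|\xi|)^4\hat W$, one obtains
\begin{equation*}
	\int_{\mathbb{R}^d}\big(\Delta^2 W*(\partial_i\partial_j h)\big)(\partial_i\partial_j h)\rd{\bx}=\int_{\mathbb{R}^d\backslash\{0\}}\frac{\xi_i^2\xi_j^2}{|\xi|^4}\,\hat W(\xi)\,|\hat\mu(\xi)|^2\rd{\xi}\,.
\end{equation*}
Summing over $i,j$ and using $\sum_{i,j}\xi_i^2\xi_j^2=|\xi|^4$, the left side equals $2E[\mu]$ by \eqref{plan_real} while the right side equals $\int_{\mathbb{R}^d\backslash\{0\}}\hat W(\xi)|\hat\mu(\xi)|^2\rd{\xi}$ (convergent, since $|\hat\mu|^2=O(|\xi|^4)$ absorbs the $|\xi|^{-4}$ singularity of $\hat W$ at the origin); this is \eqref{EFou}.

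The main obstacle is the bookkeeping in the derivation of \eqref{plan_iden} and \eqref{plan_real}: one carries out roughly six integrations by parts for the identity (plus two more afterwards), must control every boundary contribution on large spheres, and must check both that $b<4$ is exactly the threshold killing all of them and that at no stage does a derivative $\nabla^m W$ appear that \eqref{thm_convFou4_1} fails to bound (which is why the Laplacians must be transferred as whole Laplacians). A secondary, routine point is handling the explicit form of $\Phi_0$ in the low dimensions $d=1,2,3,4$, where it carries logarithmic factors, and verifying these never affect the estimates actually used.
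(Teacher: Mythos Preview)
Your argument is correct, but it takes a route that is more laborious than the paper's. The key simplification you missed is that one does not need the bi-Laplacian fundamental solution at all: the paper again takes $g=-\Delta^{-1}\mu$ (the Newtonian potential applied once), observes that the \emph{two} vanishing moments of $\mu$ now give the improved decay $|g(\bx)|\lesssim|\bx|^{-d}$, $|\nabla g(\bx)|\lesssim|\bx|^{-d-1}$, and proves the single identity
\[
\int_{\mathbb{R}^d}(W*\mu)\mu\rd\bx=\int_{\mathbb{R}^d}(\Delta^2 W*g)\,g\rd\bx
\]
with only four integrations by parts, then applies Theorem~\ref{thm_convFou1} to $\Delta^2 W$ and $\Phi_R g$. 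Your choice $h=(\Delta^2)^{-1}\mu$ forces you to work with the $d^2$ functions $\partial_i\partial_j h$, to carry roughly eight integrations by parts, and to worry about the explicit form of the biharmonic fundamental solution in dimensions $d\le 4$; the paper's single inverse Laplacian sidesteps all of this. On the Fourier side the two are of course equivalent, since $\sum_{i,j}|\widehat{\partial_i\partial_j h}|^2=|\hat g|^2$.

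One small point to tighten: in your boundary-term accounting you invoke \eqref{thm_convFou4_1} for the case $m=0$, but that hypothesis does not bound $W$ itself. The missing bound $|W(\bx)|\lesssim|\bx|^{\max(b,0)}$ (with a logarithm when $b=0$) follows by integrating $|\nabla W|\lesssim|\bx|^{b-1}$, and this still gives a boundary contribution $o(1)$ since $b<4$; you should say so explicitly rather than only hint at it with ``even smaller''. The paper's route avoids this step entirely because it never needs $W$ undifferentiated in a boundary term.
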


\begin{proof}
	Let $\mu$ be a compactly supported signed measure with $E[|\mu|]<\infty$ and $\int_{\mathbb{R}^d}\rd{\mu(\bx)}=\int_{\mathbb{R}^d}\bx\rd{\mu(\bx)}=0$. We first assume $\mu$ is a smooth function. Define $g$ as in the proof of Theorem \ref{thm_convFou2}, which satisfies
	\begin{equation}
		|g(\bx)| \lesssim |\bx|^{-d},\quad |\nabla g(\bx)| \lesssim |\bx|^{-d-1}\,,
	\end{equation}
	for large $|\bx|$, due to $\int_{\mathbb{R}^d}\rd{\mu(\bx)}=\int_{\mathbb{R}^d}\bx\rd{\mu(\bx)}=0$, {in view of Lemma \ref{lem_convvan2}}.  We also have
	\begin{equation}
		|(W*\mu)(\bx)|\lesssim |\bx|^{b-2},\quad |(\nabla W*\mu)(\bx)|\lesssim |\bx|^{b-3}\,,
	\end{equation}
	by $|\nabla^2 W(\bx)|\lesssim |\bx|^{b-2}$ and $|\nabla^3 W(\bx)|\lesssim |\bx|^{b-3}$, {in view of Lemma \ref{lem_convvan2}}.
	
	Then we claim that
	\begin{equation}
		\int_{\mathbb{R}^d} (W*\mu)\mu\rd{\bx} =  \int_{\mathbb{R}^d} (\Delta^2 W* g) g\rd{\bx}\,.
	\end{equation}
	To see this, we start by
	\begin{equation}\begin{split}
		(\Delta^2 W* g)(\bx) = & \int_{\mathbb{R}^d} \Delta^2 W(\bx-\by)g(\by)\rd{\by} = \int_{\mathbb{R}^d} \nabla\Delta W(\bx-\by)\cdot \nabla g(\by)\rd{\by}\\ = & \int_{\mathbb{R}^d} \Delta W(\bx-\by)\Delta g(\by)\rd{\by}\,,
	\end{split}\end{equation}
	where the integration by parts (two times) are justified by using the estimates $|\nabla\Delta W(\bx-\by) g(\by)| \lesssim |\by|^{b-3}\cdot |\by|^{-d},\,b-3-d<-d+1$ and $|\Delta W(\bx-\by) \nabla g(\by)| \lesssim |\by|^{b-2}\cdot |\by|^{-d-1},\,b-2-d-1<-d+1$ for fixed $\bx$ and large $|\by|$. Then we integrate by parts to get
	\begin{equation}\begin{split}
			\int_{\mathbb{R}^d} (\Delta^2 W* g) g\rd{\bx} = &  \int_{\mathbb{R}^d} (\Delta W* \Delta g) g\rd{\bx}=  \int_{\mathbb{R}^d} (\Delta W* \mu) g\rd{\bx} \\
			= &  -\int_{\mathbb{R}^d} (\nabla W*\mu) \cdot \nabla g \rd{\bx} = \int_{\mathbb{R}^d} (W*\mu) \Delta g\rd{\bx}= \int_{\mathbb{R}^d} (W*\mu) \mu\rd{\bx}\,,
	\end{split}\end{equation}
	where the integration by parts (two times) are justified by using the estimates $|(\nabla W*\mu)g| \lesssim |\bx|^{b-3}\cdot |\bx|^{-d}$  and $|(W*\mu)\nabla g| \lesssim |\bx|^{b-2}\cdot |\bx|^{-d-1}$. This proves the claim.
	
	Then, similar to the proof of Theorem \ref{thm_convFou2}, we may calculate
	\begin{equation}\begin{split}
			\int_{\mathbb{R}^d} (\Delta^2 W*(\Phi_R g))\cdot(\Phi_R g)\rd{\bx} 
			= & \int_{\mathbb{R}^d\backslash\{0\}} \cF[\Delta^2 W](\xi) |\hat{g}*\hat{\Phi}_R|^2\rd{\xi}\,. \\
	\end{split}\end{equation}
	Notice that 
	\begin{equation}
		\hat{g} = \frac{1}{4\pi^2|\xi|^2}\hat{\mu},\quad \xi\ne 0\,,
	\end{equation}
	and $\hat{\mu}$ is a smooth function with rapid decay at infinity, with $\hat{\mu}=\nabla\hat{\mu}=0$. Therefore $\hat{\mu}(\xi)\approx \frac{1}{2}\xi^\top\nabla^2 \hat{\mu}(0)\cdot \xi$ near 0, and thus $\hat{g}$ is a bounded function, smooth on $\mathbb{R}^d\backslash \{0\}$, with rapid decay at infinity. Then one can pass to the limit $R\rightarrow\infty$ similar to the proof of Theorem \ref{thm_convFou2} to get
	\begin{equation}
		\int_{\mathbb{R}^d} (\Delta^2 W* g) g\rd{\bx} = \int_{\mathbb{R}^d\backslash\{0\}} \cF[\Delta^2 W](\xi) |\hat{g}(\xi)|^2\rd{\xi} = \int_{\mathbb{R}^d\backslash\{0\}} \hat{W}(\xi) |\hat{\mu}(\xi)|^2\rd{\xi}\,,
	\end{equation}
	which finishes the proof for smooth $\mu$. The case of a general $\mu$ can be treated by mollification trick similar to the proof of Theorem \ref{thm_convFou2}.
	
\end{proof}

\subsection{A dimension reduction trick}

The results in the previous two sections apply to the repulsive power-law potentials $-\frac{|\bx|^b}{b}$ with $-d<b<4$ for $d\ge 3$, but may not apply for some cases in dimensions 1 and 2. This motivates the following theorem.

\begin{theorem}\label{thm_convFou3}
	Assume $W$ is a tempered distribution satisfying {\bf (W0-w)}, and $\hat{W}$ is a locally integrable function on $\mathbb{R}^d\backslash \{0\}$. If there exists a function $\tilde{W}$ on $\mathbb{R}^{D},\,D>d$ satisfying {\bf (W0-w)} with
	\begin{equation}\label{thm_convFou3_1}
		\int_{\mathbb{R}^{D-d}}\hat{\tilde{W}}(\xi_1,\dots,\xi_D)\rd{\xi_{d+1}}\cdots\rd{\xi_D} = \hat{W}(\xi_1,\dots,\xi_d),\quad \forall 0\ne(\xi_1,\dots,\xi_d)\in\mathbb{R}^d\,,
	\end{equation}
	and
	\begin{equation}\label{thm_convFou3_2}
		\tilde{W}(x_1,\dots,x_d,0,\dots,0)=W(x_1,\dots,x_d)\,,
	\end{equation}
	then the Fourier representable property for $\tilde{W}$ at level $k$ implies the same property for $W$.
\end{theorem}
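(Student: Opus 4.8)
The plan is to reduce the $d$-dimensional Fourier representation to the $D$-dimensional one by embedding a compactly supported signed measure $\mu$ on $\mathbb{R}^d$ into $\mathbb{R}^D$ as $\tilde\mu = \mu \otimes \nu$, where $\nu$ is a suitable compactly supported probability measure (or mollified version thereof) on $\mathbb{R}^{D-d}$. The key algebraic facts are: first, because of the product structure, $\hat{\tilde\mu}(\xi_1,\dots,\xi_D) = \hat\mu(\xi_1,\dots,\xi_d)\,\hat\nu(\xi_{d+1},\dots,\xi_D)$; second, by \eqref{thm_convFou3_2}, the energy $E[\mu]$ computed with $W$ on $\mathbb{R}^d$ agrees with $E[\tilde\mu]$ computed with $\tilde W$ on $\mathbb{R}^D$ provided $\nu$ is chosen concentrated near the origin so that $\tilde W$ is evaluated near the slice $\{x_{d+1}=\dots=x_D=0\}$; and third, applying the Fourier representation for $\tilde W$ at level $k$ gives $E[\tilde\mu] = \frac12\int_{\mathbb{R}^D\setminus\{0\}} \hat{\tilde W}(\xi)|\hat\mu|^2|\hat\nu|^2\rd\xi$, and integrating out $\xi_{d+1},\dots,\xi_D$ using \eqref{thm_convFou3_1} recovers $\frac12\int_{\mathbb{R}^d\setminus\{0\}}\hat W|\hat\mu|^2\rd\xi$ when $\hat\nu$ is taken close to $1$.

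The steps I would carry out, in order: (1) Fix $\mu$ compactly supported on $\mathbb{R}^d$ with $E[|\mu|]<\infty$ and vanishing moments up to order $k-1$; note $\tilde\mu=\mu\otimes\nu_\delta$ also has vanishing moments up to order $k-1$ for any symmetric $\nu_\delta$, so the level-$k$ hypothesis applies to it. Here $\nu_\delta$ should be a mollifier-type bump on $\mathbb{R}^{D-d}$ scaled to width $\delta$, so that $\hat{\nu}_\delta(\eta)=\hat\nu(\delta\eta)\to 1$ locally uniformly and is uniformly bounded. (2) Verify $E[|\tilde\mu|]<\infty$: since $\tilde W$ is locally integrable (from {\bf (W0-w)}) and $|\tilde\mu|=|\mu|\otimes\nu_\delta$ is compactly supported, a Fubini/local-integrability argument controls the double integral; one must be slightly careful if $\tilde W$ has a singularity, but {\bf (W0-w)} only requires local integrability, which suffices for finiteness when $\delta$ is bounded. (3) Show $E[\tilde\mu] \to E[\mu]$ as $\delta\to 0$: write $2E[\tilde\mu] = \iint \tilde W(\bx-\by,\bs-\bt)\rd\mu(\bx)\rd\mu(\by)\rd\nu_\delta(\bs)\rd\nu_\delta(\bt)$ and use dominated convergence together with $\tilde W(\cdot,0)=W$ and lower-semicontinuity; alternatively, mollify $\mu$ first in $\mathbb{R}^d$ and pass to the limit as in Theorems \ref{thm_convFou1}--\ref{thm_convFou2}. (4) On the Fourier side, apply Fourier representability of $\tilde W$ at level $k$ to get $2E[\tilde\mu]=\int_{\mathbb{R}^D\setminus\{0\}}\hat{\tilde W}(\xi)|\hat\mu(\xi_1,\dots,\xi_d)|^2|\hat\nu_\delta(\xi_{d+1},\dots,\xi_D)|^2\rd\xi$; split off the set where $(\xi_1,\dots,\xi_d)=0$ (a null set in $\mathbb{R}^D$, hence harmless), apply Fubini using local integrability of $\hat{\tilde W}$ on $\mathbb{R}^D\setminus\{0\}$, and integrate out the last $D-d$ variables; as $\delta\to 0$, $|\hat\nu_\delta|^2\to 1$ and \eqref{thm_convFou3_1} yields $\int_{\mathbb{R}^d\setminus\{0\}}\hat W(\xi)|\hat\mu(\xi)|^2\rd\xi$. (5) Match the two limits to conclude \eqref{EFou} for $W$.

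The main obstacle I expect is the justification of the limit interchanges in step (4): one needs $\hat{\tilde W}\,|\hat\mu|^2$ to be integrable (or at least to have a dominating function uniform in $\delta$) on $\mathbb{R}^D\setminus\{0\}$ so that dominated convergence applies as $|\hat\nu_\delta|^2\uparrow 1$, and one must ensure the partial integral $\int \hat{\tilde W}\rd\xi_{d+1}\cdots\rd\xi_D$ in \eqref{thm_convFou3_1} is finite for a.e. $(\xi_1,\dots,\xi_d)$ and compatible with Fubini. This is plausible because $\hat\mu$ decays rapidly (it is smooth, $\mu$ compactly supported) and because \eqref{thm_convFou3_1} explicitly asserts the partial integral equals the locally integrable function $\hat W$; but spelling out the dominating function — likely something like $\hat{\tilde W}_+(\xi)(1+|\xi|)^{-N}$ controlled via the tempered-distribution bound analogous to \eqref{hatWb} — requires care, especially near $\xi=0$ where $\hat{\tilde W}$ may blow up and where one leans on the vanishing moments of $\mu$ (hence vanishing of $\hat\mu$ to order $k$ at $0$) to compensate. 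The mollification fallback of step (3) can be invoked to first reduce to smooth $\mu$, which makes all the decay statements cleanest, exactly as in the proofs of Theorems \ref{thm_convFou2} and \ref{thm_convFou4}.
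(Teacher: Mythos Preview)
Your overall plan—embed $\mu$ into $\mathbb{R}^D$ by tensoring with a measure on $\mathbb{R}^{D-d}$, apply the $D$-dimensional Fourier representation, then integrate out the extra frequencies via \eqref{thm_convFou3_1}—is exactly the paper's strategy. But the paper makes one choice that collapses almost all of your work: instead of a mollifier $\nu_\delta$ followed by a limit $\delta\to 0$, it takes $\tilde\mu = \mu \otimes \delta_0$, the Dirac mass at the origin of $\mathbb{R}^{D-d}$. With this choice $\hat{\tilde\mu}(\xi_1,\dots,\xi_D)=\hat\mu(\xi_1,\dots,\xi_d)$ exactly (so $\hat\nu\equiv 1$, not merely $\hat\nu_\delta\to 1$), and \eqref{thm_convFou3_2} gives $\tilde E[|\tilde\mu|]=E[|\mu|]<\infty$ on the nose. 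Your steps (3) and (5), and the $\delta\to 0$ passage in step (4) that you correctly identify as ``the main obstacle'', simply vanish; the proof becomes a direct computation plus one application of \eqref{thm_convFou3_1} via Fubini.

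Your mollifier route could likely be pushed through for the power-law applications, but as written it has real gaps under the bare hypothesis {\bf (W0-w)}. In step (2), local integrability of $\tilde W$ on $\mathbb{R}^D$ does not imply that the partial convolution $\int \tilde W(\bx-\by,\bu)\psi_\delta(\bu)\rd\bu$ is finite for $|\mu|\otimes|\mu|$-a.e.\ $(\bx,\by)$ when $|\mu|$ is singular (smoothing only in the transverse variables does not tame a singularity aligned with $\mathbb{R}^d$). In step (3), lower-semicontinuity of $\tilde W$ gives only a one-sided inequality, and you have not produced a dominating function for the other direction. None of this is needed with the $\delta_0$ choice, which is why the paper's proof fits in a few lines.
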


Notice that \eqref{thm_convFou3_1} and \eqref{thm_convFou3_2} are formally equivalent.

\begin{proof}
	Let $\mu\ne 0$ be a compactly supported signed measure on $\mathbb{R}^d$ with $E[|\mu|]<\infty$ and $\int_{\mathbb{R}^d}\bx^{\otimes j}\rd{\mu(\bx)} = 0,\,j=0,1,\dots,k-1$. Then, denote
	\begin{equation}
		\tilde{\mu} = \mu(x_1,\dots,x_d) \otimes \delta(x_{d+1},\dots,x_D)\,,
	\end{equation} 
	which is a compactly supported signed measure on $\mathbb{R}^D$ with $\int_{\mathbb{R}^D}\bx^{\otimes j}\rd{\tilde{\mu}(\bx)} = 0,\,j=0,1,\dots,k-1$. We have $\tilde{E}[|\tilde{\mu}|] = E[|\mu|]<\infty$ due to \eqref{thm_convFou3_2}. Then the Fourier representable property at level $k$ for $\tilde{W}$ gives
	\begin{equation}
		\int_{\mathbb{R}^D} (\tilde{W}*\tilde{\mu})\tilde{\mu}\rd{\bx} = \int_{\mathbb{R}^D\backslash \{0\}} \hat{\tilde{W}}(\xi)|\hat{\tilde{\mu}}(\xi)|^2\rd{\xi}\,.
	\end{equation}
	Notice that the LHS is exactly $\int_{\mathbb{R}^d} (W*\mu)\mu\rd{\bx}$ due to \eqref{thm_convFou3_2}. For the RHS, we have
	\begin{equation}
		\hat{\tilde{\mu}}(\xi_1,\dots,\xi_D) = \hat{\mu}(\xi_1,\dots,\xi_d)\,.
	\end{equation}
	Therefore, using \eqref{thm_convFou3_1}, we see that it is $\int_{\mathbb{R}^d\backslash\{0\}} \hat{W}(\xi)|\hat{\mu}(\xi)|^2\rd{\xi}$. This gives the conclusion.
	
\end{proof}

\subsection{Application to power-law potentials}

We first calculate the Fourier transform of power-law potentials as tempered distributions.
\begin{theorem}\label{thm_powerF}
	If $0<\Re(s)<d$, then 
	\begin{equation}\label{thm_powerF_1}
		\cF\Big[\frac{|\bx|^{-s}}{s}\Big](\xi) =  \cpf(s)|\xi|^{-d+s},\quad \cpf(s) :=\pi^{s-\frac{d}{2}}\frac{\Gamma(\frac{d-s}{2})}{\Gamma(\frac{s}{2})s}\,,
	\end{equation}
	as a locally integrable function. If $\Re(s)\le 0$ (with $\frac{|\bx|^{-s}}{s}$ viewed as $-\ln|\bx|$ when $s=0$), then $\cF[\frac{|\bx|^{-s}}{s}]$ is a locally integrable function on $\mathbb{R}^d\backslash \{0\}$, and the above formula is also true for any $\xi\ne 0$.
	
	There also holds
	\begin{equation}\label{thm_powerF_2}
	\cF\Big[-\frac{|\bx|^{-s}}{s}\ln|\bx|\Big](\xi) =  \tcpf(s)|\xi|^{-d+s} + \cpf(s)|\xi|^{-d+s}\ln |\xi|,\quad \tcpf(s) := \frac{\cpf(s)}{s}+\cpf'(s)\,,
	\end{equation}
	for any $\Re(s)<d,\,s\ne 0$ in the same sense.
\end{theorem}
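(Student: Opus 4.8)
The plan is to establish the first formula \eqref{thm_powerF_1} in the classical strip $0<\Re(s)<d$ by a standard Gamma-function computation, then extend it to $\Re(s)\le 0$ by analytic continuation, and finally obtain \eqref{thm_powerF_2} by differentiating the identity \eqref{thm_powerF_1} in the parameter $s$. For the base case, I would start from the Gaussian integral representation $|\bx|^{-s}=\frac{1}{\Gamma(s/2)}\int_0^\infty t^{s/2-1}e^{-\pi t|\bx|^2}\,\pi^{s/2}\rd t$ (up to the precise normalization constant, which one fixes by bookkeeping), take the Fourier transform under the integral sign (legitimate for $0<\Re(s)<d$ since everything is absolutely integrable), use that the Fourier transform of a Gaussian is a Gaussian, and then recognize the resulting $t$-integral as another Gamma function, namely $\Gamma(\tfrac{d-s}{2})$. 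Dividing by the extra factor $s$ coming from $\frac{|\bx|^{-s}}{s}$ yields exactly $\cpf(s)=\pi^{s-d/2}\frac{\Gamma((d-s)/2)}{\Gamma(s/2)\,s}$. This is the routine part.

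For the extension to $\Re(s)\le 0$, I would argue that both sides of \eqref{thm_powerF_1}, interpreted in the sense of tempered distributions restricted to $\mathbb{R}^d\backslash\{0\}$, depend analytically on $s$ in a suitable domain. Concretely, for $0<\Re(s)<d$ the function $\frac{|\bx|^{-s}}{s}$ is locally integrable and of slow growth, hence a tempered distribution whose Fourier transform (again a tempered distribution, here a locally integrable function) depends holomorphically on $s$; the same holds for $s$ with $\Re(s)\le 0$ once one notes $\frac{|\bx|^{-s}}{s}$ is still locally integrable (it is bounded near the origin and polynomially growing at infinity) and that $|\xi|^{-d+s}$ is locally integrable on $\mathbb{R}^d\backslash\{0\}$ for all $s\ne d$. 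Pairing both sides against a fixed Schwartz test function supported away from $0$ in frequency gives two holomorphic functions of $s$ agreeing on $0<\Re(s)<d$, hence everywhere; the case $s=0$, where $\frac{|\bx|^{-s}}{s}$ is read as $-\ln|\bx|$, follows by taking the limit $s\to0$ of the identity (the factor $\tfrac1s$ in $\cpf(s)$ produces the logarithm, matching $-\ln|\bx|\leftrightarrow \cpf(0)|\xi|^{-d}$ appropriately). I expect this analytic-continuation step to be the main obstacle: one must be careful about what "Fourier transform" means once $s\le 0$ (the function is no longer integrable, and one is really identifying a tempered distribution with a locally integrable function only on $\mathbb{R}^d\backslash\{0\}$), and about uniformity of the holomorphic dependence when pairing against test functions.

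Finally, to get \eqref{thm_powerF_2}, I would differentiate \eqref{thm_powerF_1} with respect to $s$. On the spatial side, $\frac{\partial}{\partial s}\frac{|\bx|^{-s}}{s}=-\frac{|\bx|^{-s}}{s}\ln|\bx|-\frac{|\bx|^{-s}}{s^2}$, so $-\frac{|\bx|^{-s}}{s}\ln|\bx|=\frac{\partial}{\partial s}\frac{|\bx|^{-s}}{s}+\frac{|\bx|^{-s}}{s^2}$, while on the frequency side $\frac{\partial}{\partial s}\big(\cpf(s)|\xi|^{-d+s}\big)=\cpf'(s)|\xi|^{-d+s}+\cpf(s)|\xi|^{-d+s}\ln|\xi|$. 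Combining, and using \eqref{thm_powerF_1} again to rewrite $\cF[\frac{|\bx|^{-s}}{s^2}]=\frac{\cpf(s)}{s}|\xi|^{-d+s}$, one reads off the coefficient $\tcpf(s)=\frac{\cpf(s)}{s}+\cpf'(s)$ exactly as stated. The interchange of $\partial_s$ with the Fourier transform is justified by the holomorphic dependence already established (differentiation of a holomorphic family of tempered distributions commutes with $\cF$). This identity then holds for all $\Re(s)<d$, $s\ne0$, in the same distributional-on-$\mathbb{R}^d\backslash\{0\}$ sense, completing the proof.
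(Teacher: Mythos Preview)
Your proposal is correct and follows essentially the same route as the paper: the base case $0<\Re(s)<d$ is taken as standard (the paper simply calls it well-known, while you supply the Gaussian-integral derivation), the extension to $\Re(s)\le 0$ is done by pairing both sides against Schwartz functions with $0\notin\supp\hat{\phi}$ and invoking analytic continuation, and \eqref{thm_powerF_2} is obtained by differentiating \eqref{thm_powerF_1} in $s$. The only cosmetic difference is that the paper replaces $\frac{|\bx|^{-s}}{s}$ by $\frac{|\bx|^{-s}-1}{s}$ on the spatial side (harmless since the test functions are mean-zero), which makes holomorphy across $s=0$ manifest and avoids the separate limit argument you sketch.
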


\begin{remark}[The function $\cpf(s)$]\label{rmk_cpf}
	On $\{s\in\mathbb{C}:\Re(s) < d\}$, the function $\cpf(s)$ is holomorphic, with simple zeros at $s=-2,-4,-6,\dots$. On the real line, $\cpf>0$ on the intervals $(-2,d),\,(-6,-4),\,(-10,-8),\dots$ and $\cpf<0$ on $(-4,-2),\,(-8,-6),\dots$. $\cpf$ has a simple pole at $s=d$ with $\cpf(s) \approx \pi^{\frac{d}{2}}\frac{2}{\Gamma(\frac{d}{2})d}\cdot \frac{1}{d-s}$ for $s$ near $d$.
	
	We also notice that for $s$ close to $-2$,
	\begin{equation}
		\frac{1}{\Gamma(\frac{s}{2})} =  \frac{\frac{s}{2}\cdot \frac{s+2}{2} }{\Gamma(\frac{s+4}{2})} \approx -\frac{1}{2}(s+2)\,,
	\end{equation}
	to the leading order, and therefore
	\begin{equation}
		\cpf(s)  \approx \pi^{-2-\frac{d}{2}}\frac{\Gamma(\frac{d+2}{2})}{4}(s+2)\,,
	\end{equation}
	i.e.,
	\begin{equation}
		\cpf'(-2)  \approx \pi^{-2-\frac{d}{2}}\frac{\Gamma(\frac{d+2}{2})}{4} > 0\,.
	\end{equation}
\end{remark}

\begin{proof}
	\eqref{thm_powerF_1} in the case $0<\Re(s)<d$ is well-known. For $\Re(s)\le 0$, it suffices to show that
	\begin{equation}\label{xsphi}
		\int_{\mathbb{R}^d}\frac{|\bx|^{-s}-1}{s}\phi(\bx)\rd{\bx} = \pi^{s-\frac{d}{2}}\frac{\Gamma(\frac{d-s}{2})}{\Gamma(\frac{s}{2})s} \int_{\mathbb{R}^d\backslash\{0\}}|\xi|^{-d+s}\bar{\hat{\phi}}(\xi)\rd{\xi}\,,
	\end{equation}
	for any complex-valued Schwartz function $\phi$ such that $0\notin \supp\hat{\phi}$. Notice that the extra $-1$ on the LHS gives no contribution because $0\notin \supp\hat{\phi}$ implies that $\phi$ is mean-zero. Also notice that the case $s=0$ can be derived using a limit procedure $s\rightarrow 0$ in the above expression.
	
	Fix $\phi$. The LHS of \eqref{xsphi} is a holomorphic function in $\Re(s)<d$ (which is $\int_{\mathbb{R}^d}(-\ln|\bx|)\phi(\bx)\rd{\bx}$ when $s=0$). Since the gamma function has no zero and has simple poles at $0,-1,-2,\dots$, we see that the RHS is also a holomorphic function in $\Re(s)<d$, which is understood as $0$ when $s=-2,-4,-6,\dots$ and $\pi^{-\frac{d}{2}}\frac{\Gamma(\frac{d}{2})}{2} \int_{\mathbb{R}^d\backslash\{0\}}|\xi|^{-d}\bar{\hat{\phi}}(\xi)\rd{\xi}$ when $s=0$. Since we already know that \eqref{xsphi} is true for any $0<\Re(s)<d$, it is also true for any $\Re(s)<d$ by analytic continuation.
	
	\eqref{thm_powerF_2} can be obtained from \eqref{thm_powerF_1} by differentiating with respect to $s$. {In fact, formally differentiating \eqref{thm_powerF_1} leads to
	\begin{equation}
		\cF\Big[-\frac{|\bx|^{-s}\ln|\bx|}{s}-\frac{|\bx|^{-s}}{s^2}\Big](\xi) =  \cpf'(s)|\xi|^{-d+s} + \cpf(s)|\xi|^{-d+s}\ln|\xi|\,.
	\end{equation}
	Then doing a linear combination with \eqref{thm_powerF_1} gives \eqref{thm_powerF_2}. Here, when $0<\Re(s)<d$, the differentiation with respect to $s$ inside $\cF[\cdot]$ can be justified by taking a Schwartz test function $\phi$. When $\Re(s)\le 0,\,s\ne 0$, we further impose  $0\notin \supp\hat{\phi}$, and thus justify \eqref{thm_powerF_2} on $\mathbb{R}^d\backslash\{0\}$.
	}
	
\end{proof}

Then we give the Fourier representable property for power-law and logarithmic power-law potentials.
\begin{theorem}\label{thm_Fourep}
	The repulsive power-law potential $W_b(\bx)=-\frac{|\bx|^b}{b}$ is
	\begin{itemize}
		\item Fourier representable at level 0, if $-d<b<0$;
		\item Fourier representable at level 1, if $0\le b < 2$;
		\item Fourier representable at level 2, if $2\le b < 4$.
	\end{itemize}
	
	The attractive-repulsive power-law potential $W_{a,b}(\bx)=\frac{|\bx|^a}{a}-\frac{|\bx|^b}{b}$ is Fourier representable at level 2 if $-d<b<a<4$.
	
	The logarithmic power-law potential $W_{b,\ln} = \frac{|\bx|^b}{b}\ln|\bx|$ is 
	\begin{itemize}
	\item Fourier representable at level 0, if $-d<b<0$;
	\item Fourier representable at level 1, if $0< b < 2$;
	\item Fourier representable at level 2, if $2\le b < 4$.
\end{itemize}
\end{theorem}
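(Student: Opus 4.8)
The plan is to verify, for each listed case, the hypotheses of the appropriate level-$k$ representation theorem (Theorem \ref{thm_convFou1}, \ref{thm_convFou2}, or \ref{thm_convFou4}), possibly after the dimension-reduction trick of Theorem \ref{thm_convFou3}. All the potentials in question are tempered distributions that satisfy {\bf (W0-w)}, and by Remark \ref{rmk_W1} they all satisfy {\bf (W1)} (for $W_b$ with $b\ge 0$ the potential is continuous on all of $\mathbb{R}^d$, so {\bf (W1)} is immediate; for $-d<b<0$ the singularity is exactly of the admissible type; similarly for $W_{b,\ln}$). By Theorem \ref{thm_powerF}, the Fourier transform of $W_b=-\frac{|\bx|^b}{b}$ is $\cF[W_b](\xi)=\cpf(-b)|\xi|^{-d-b}$, which is locally integrable on $\mathbb{R}^d\backslash\{0\}$ for every $b<d$, and is moreover locally integrable on all of $\mathbb{R}^d$ precisely when $-d<b<0$. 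This gives the level-$0$ claim directly from Theorem \ref{thm_convFou1}.

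For $0\le b<2$ I would apply Theorem \ref{thm_convFou2}: here $-\Delta W_b$ is, up to a positive constant, $|\bx|^{b-2}$, whose exponent satisfies $-d<b-2<0$, so $-\Delta W_b$ is itself (a constant times) a repulsive power-law potential of the level-$0$ type, hence satisfies the hypotheses of Theorem \ref{thm_convFou1}; the growth bounds $|\nabla W_b|\lesssim|\bx|^{b-1}$, $|\Delta W_b|\lesssim|\bx|^{b-2}$ in \eqref{thm_convFou2_1} hold with the same $b<2$. For $2\le b<4$ I would apply Theorem \ref{thm_convFou4}: $\Delta^2 W_b$ is a constant times $|\bx|^{b-4}$ with $-d<b-4<0$, again of level-$0$ type, and the derivative bounds in \eqref{thm_convFou4_1} hold with $b<4$. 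The attractive-repulsive statement for $W_{a,b}$ with $-d<b<a<4$ follows because $W_{a,b}=\frac{|\bx|^a}{a}+W_b$; both summands are level-$2$ representable by the above (the attractive term $\frac{|\bx|^a}{a}$ with $0<a<4$ is handled exactly as a ``$W_b$ with $b=a$'', and when $-d<a<0$ the attractive term is itself level-$0$, hence level-$2$, representable), and level-$2$ representability is evidently additive since \eqref{EFou} is linear in $W$. The logarithmic cases are identical in structure, using the second formula in Theorem \ref{thm_powerF}: $\cF[W_{b,\ln}]$ is $-\tcpf(-b)|\xi|^{-d-b}-\cpf(-b)|\xi|^{-d-b}\ln|\xi|$, which is locally integrable away from the origin for $b<d$ and locally integrable on all of $\mathbb{R}^d$ for $-d<b<0$; and $-\Delta W_{b,\ln}$, $\Delta^2 W_{b,\ln}$ are (linear combinations of) $|\bx|^{b-2}$, $|\bx|^{b-2}\ln|\bx|$ respectively $|\bx|^{b-4}$, $|\bx|^{b-4}\ln|\bx|$, which are of the level-$0$ type in the stated ranges, while the polynomial-in-$\log$ corrections to the derivative bounds in \eqref{thm_convFou2_1}, \eqref{thm_convFou4_1} are absorbed by choosing $b'$ slightly larger than $b$ (still $<2$, resp. $<4$).

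The main obstacle is dimensional: Theorems \ref{thm_convFou2} and \ref{thm_convFou4} require $-\Delta W$ (resp. $\Delta^2 W$) to have a \emph{locally integrable} Fourier transform on all of $\mathbb{R}^d$, but $-\Delta W_b$ behaves like $|\bx|^{b-2}$ near $0$, which is locally integrable only when $b-2>-d$, i.e. $b>2-d$ — automatic for $d\ge 3$ but a genuine restriction for $d=1,2$ and the lower $b$ values; similarly $\Delta^2 W_b\sim|\bx|^{b-4}$ needs $b>4-d$. This is exactly the gap the dimension-reduction trick is designed to close: for the problematic low dimensions I would lift $W_b$ on $\mathbb{R}^d$ to $\tilde W_b = -\frac{|\bx|^b}{b}$ on $\mathbb{R}^D$ for $D\ge 3$ large enough that $b>2-D$ (resp. $b>4-D$), verify \eqref{thm_convFou3_1} using $\int_{\mathbb{R}^{D-d}}|(\xi,\eta)|^{-D-b}\rd\eta = c\,|\xi|^{-d-b}$ (a standard beta-integral, finite and with the correct constant when $-d<b<d$, matching $\cpf$ by Theorem \ref{thm_powerF}) and \eqref{thm_convFou3_2} trivially, apply the already-established level-$k$ representability of $\tilde W_b$ on $\mathbb{R}^D$, and conclude level-$k$ representability of $W_b$ on $\mathbb{R}^d$ via Theorem \ref{thm_convFou3}. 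The logarithmic potentials are lifted the same way, differentiating the power-law identity in the parameter. A minor point to check throughout is the additivity claim for $W_{a,b}$: one must note that the cross terms do not appear because \eqref{EFou} for each summand is over the same measures $\mu$, so $E_{a,b}[\mu]=E_{\text{attr}}[\mu]+E_b[\mu]$ with each piece given by its own Fourier integral.
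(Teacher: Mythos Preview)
Your approach is essentially the paper's: level-$0$/$1$/$2$ via Theorems \ref{thm_convFou1}/\ref{thm_convFou2}/\ref{thm_convFou4}, the dimension-reduction trick (Theorem \ref{thm_convFou3}) to handle $d\le 2$, and additivity for $W_{a,b}$. Two technical points you skate over, however, are handled explicitly in the paper and deserve attention.

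First, for $2<b<4$ the function $\Delta^2 W_b$ is a \emph{negative} constant times $|\bx|^{b-4}$, hence blows up to $-\infty$ at the origin and fails {\bf (W0-w)}; so Theorem \ref{thm_convFou4} does not apply to $W_b$ directly. The paper fixes this by applying Theorem \ref{thm_convFou4} to $-W_b$ instead (for which $\Delta^2(-W_b)\ge 0$), obtaining level-$2$ representability of $-W_b$, and then observing that since $-W_b$ is continuous for $b>0$ one has $E_{-W_b}[|\mu|]<\infty$ automatically, so \eqref{EFou} transfers to $W_b$. Your statement that $\Delta^2 W_b$ is ``of level-$0$ type'' is not quite right without this sign flip.

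Second, the additivity step for $W_{a,b}$ is not purely formal: the definition of Fourier representability at level $2$ quantifies over those $\mu$ with $E_W[|\mu|]<\infty$, and this class depends on $W$. To split $E_{a,b}[\mu]$ into the $W_a$- and $W_b$-pieces you must first check that $E_{a,b}[|\mu|]<\infty$ forces $E_b[|\mu|]<\infty$ and $E_a[|\mu|]<\infty$ separately. The paper does this by noting that the singularity of $W_{a,b}$ at $0$ is comparable to that of $W_b$ (since $a>b$), so $E_{a,b}[|\mu|]<\infty\Rightarrow E_b[|\mu|]<\infty$, and then $E_a[|\mu|]<\infty$ follows because $W_a$ has the milder (or no) singularity. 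Your ``cross terms do not appear'' remark misidentifies the issue.
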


\begin{proof}
	We first consider $W_b(\bx)=-\frac{|\bx|^b}{b}$. If $-d<b<0$, then Theorem \ref{thm_convFou1} applies, showing that $W_b$ is Fourier representable at level 0. Now we first assume $d\ge 3$. If $0\le b < 2$, then Theorem \ref{thm_convFou2} applies since
	\begin{equation}
		-\Delta W_b = (b-2+d)|\bx|^{b-2}\,,
	\end{equation}
	showing that $W_b$ is Fourier representable at level 1. If $2\le b < 4$, then Theorem \ref{thm_convFou4} applies since
	\begin{equation}
		\Delta^2 (-W_b) = (b-2+d)(b-2)(b-4+d)|\bx|^{b-4}\,,
	\end{equation}	
	showing that $-W_b$ is Fourier representable at level 2. The same is true for $W_b$ because $E_{-W_b}[|\mu|]<\infty$ for any compactly supported signed measure $\mu$. For the case $0\le b < 4$ with $d=1,2$, the same results can be obtained by using Theorem \ref{thm_convFou3} with $D=3$ and $\tilde{W}(\bx)=-\frac{|\bx|^b}{b}$, for which the conditions \eqref{thm_convFou3_1} and \eqref{thm_convFou3_2} can be verified explicitly via Theorem \ref{thm_powerF}.
	
	For $W_{a,b}(\bx)=\frac{|\bx|^a}{a}-\frac{|\bx|^b}{b}$ with $-d<b<a<4$, we notice that its singularity near 0 is comparable to $W_b$. Therefore $E_{a,b}[|\mu|]<\infty$ implies $E_b[|\mu|]<\infty$ (where $E_b$ denotes the energy associated to $W_b$), which further implies $E_a[|\mu|]<\infty$. Therefore, applying the Fourier representable property at level 2 for $W_b$ and $W_a$ and noticing that $W_{a,b}= W_b-W_a$, we see that $W_{a,b}$ is Fourier representable at level 2.
	
	For $W_{b,\ln} = \frac{|\bx|^b}{b}\ln|\bx|$, one can prove its Fourier representable property similarly as $W_b$. In fact, the case $-d<b<0$ follows from Theorem \ref{thm_convFou1}. Now we first assume $d\ge 3$. The case $0<b<2$ follows from Theorem \ref{thm_convFou1} and
	\begin{equation}
		-\Delta W_{b,\ln} = -(b+d-2)|\bx|^{b-2}\ln|\bx| - \frac{1}{b}(2b+d-2)|\bx|^{b-2}\,.
	\end{equation}
	The case $2\le b<4$ follows from Theorem \ref{thm_convFou2} and
	\begin{equation}\begin{split}
		\Delta^2 W_{b,\ln} = & (b-2+d)(b-2)(b-4+d)|\bx|^{b-4}\ln|\bx| \\
		& + \Big((b+d-2)(2b+d-6)+\frac{1}{b}(2b+d-2)(b-2)(b+d-4)\Big) |\bx|^{b-4}\,.
	\end{split}\end{equation}
	For the case $d=1,2$ with $0<b<4$, the same results can be obtained by using Theorem \ref{thm_convFou3}.
\end{proof}

\begin{remark}
	We remark that the repulsive or attractive power-law potential $W(\bx)=\pm \frac{|\bx|^b}{-b}$ with $b\ge 4$ is not Fourier representable at level 2. In fact, if $b\ge 4$ is not an even integer, then by Theorem \ref{thm_powerF} we see that $\hat{W}(\xi)= \pm\cpf(-b)|\xi|^{-d-b}$ where $-d-b<-d-4$ and $\cpf(-b)\ne 0$. For a compactly supported signed measure $\mu$ with $\int_{\mathbb{R}^d}\rd{\mu(\bx)}=\int_{\mathbb{R}^d}\bx\rd{\mu(\bx)}=0$, one generally has $\hat{\mu}(\xi) = O(|\xi|^2)$ for small $|\xi|$. Therefore the integrand in \eqref{EFou} is generally not locally integrable at 0, making the integral undefined. If $b=4,6,8,\dots$, then $\hat{W}=0$ on $\mathbb{R}^d\backslash \{0\}$, and thus the RHS of \eqref{EFou} is 0. However, it is easy to see that $E[\mu]$ is not always zero, for example, by considering $\mu = \delta_{-1}-2\delta_0+\delta_1$. Similarly $W_{b,\ln} = \frac{|\bx|^b}{b}\ln|\bx|$ with $b\ge 4$  is not Fourier representable at level 2.
\end{remark}

\subsection{Fourier representation of the LIC radius}

\begin{theorem}\label{thm_RLICab}
	For the potentials $W_{a,b}$ with $-d<b<a$ and $W_{b,\ln}$ with $b>-d,\,b\ne 0$, its LIC radius is given by 
	\begin{equation}\label{thm_RLICab_1}\begin{split}
		R_{\textnormal{LIC}}[W] & = \sup\Big\{R\ge 0:  \textnormal{ For any nontrivial signed measure $\mu$ supported on $\overline{B(0;R)}$} \\ & \textnormal{ with $E[|\mu|]<\infty$ and $\int_{\mathbb{R}^d}\rd{\mu(\bx)}=\int_{\mathbb{R}^d}\bx\rd{\mu(\bx)}=0$, there holds }  
		E[\mu]>0\Big\}\,.
	\end{split}\end{equation}
	If one further assumes $a<4$ for $W_{a,b}$ or $b<4$ for $W_{b,\ln}$, then
	\begin{equation}\label{thm_RLICab_2}\begin{split}
		R_{\textnormal{LIC}}[W] & = \sup\Big\{R\ge 0:  \textnormal{ For any nontrivial signed measure $\mu$ supported on $\overline{B(0;R)}$}  \\ & \textnormal{ with $E[|\mu|]<\infty$ and $\int_{\mathbb{R}^d}\rd{\mu(\bx)}=\int_{\mathbb{R}^d}\bx\rd{\mu(\bx)}=0$,} \\
		&  \textnormal{ there holds } \int_{\mathbb{R}^d\backslash \{0\}} \hat{W}(\xi)|\hat{\mu}(\xi)|^2\rd{\xi}>0\Big\}\,.
	\end{split}\end{equation}
\end{theorem}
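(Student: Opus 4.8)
The plan is to exploit that $E$ is a quadratic form: strict convexity along a segment $(1-t)\rho_0+t\rho_1$ is equivalent to a sign condition on the difference $\rho_1-\rho_0$, and — for the second identity — that sign condition is converted into Fourier positivity through \eqref{EFou}.

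To obtain \eqref{thm_RLICab_1}, observe that for compactly supported probability measures $\rho_0,\rho_1$ with $E[\rho_i]<\infty$, writing $\mu=\rho_1-\rho_0$ and expanding bilinearly gives $E[(1-t)\rho_0+t\rho_1]=E[\rho_0]+t\,B(\rho_0,\mu)+t^2E[\mu]$ with all coefficients finite; thus $t\mapsto E[(1-t)\rho_0+t\rho_1]$ is a quadratic polynomial in $t$ with leading coefficient $E[\mu]$, hence strictly convex on the nondegenerate interval $[0,1]$ if and only if $E[\mu]>0$. It then remains to match up the competitors. Given an admissible pair $(\rho_0,\rho_1)$ in the definition of $R_{\textnormal{LIC}}[W]$, the difference $\mu=\rho_1-\rho_0$ is a nontrivial signed measure on $\overline{B(0;R)}$ with $\int\rd\mu=\int\bx\rd\mu=0$. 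Conversely, given such a $\mu$, rescale it (using the $2$-homogeneity of $E$, which leaves the sign of $E[\mu]$ unchanged) so that $m:=\mu_\pm(\mathbb{R}^d)$ is small, and put $\rho_0=\mu_-+\tau$, $\rho_1=\mu_++\tau$, where $\tau$ is a smooth probability density of mass $1-m$ supported in $\overline{B(0;R)}$ whose barycenter is chosen so that $\int\bx\rd\rho_0=\int\bx\rd\rho_1=0$; this is possible because $|\int\bx\rd\mu_-|\le mR<(1-m)R$. Then $\rho_1-\rho_0=\mu$, the pair is admissible (finiteness is checked below), and strict convexity for it is equivalent to $E[\mu]>0$. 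This correspondence, respecting the two conditions, yields \eqref{thm_RLICab_1}.

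The matching of the finiteness hypotheses is the technical heart and, I expect, the main obstacle. That $E[|\mu|]<\infty$ implies $E[\rho_0],E[\rho_1]<\infty$ is immediate from $\mu_\pm\le|\mu|$, together with the fact that convolving the kernel with the smooth $\tau$ removes its singularity (so the cross terms $B(\mu_\pm,\tau)$ are finite). The reverse implication — that $\mu=\rho_1-\rho_0$ satisfies $E[|\mu|]<\infty$ whenever $E[\rho_i]<\infty$ — is where the work lies. I would use Jordan minimality $|\rho_1-\rho_0|\le\rho_0+\rho_1$ together with the structure of the singularity of $W$ at the origin. For $W_{a,b}$ with $b<0$, splitting off the locally bounded part (and, when $a<0$, comparing the two Riesz terms on a small ball) gives bounds $c_1|\bx|^b-C\le W_{a,b}(\bx)\le c_2|\bx|^b+C$ on any fixed compact set, so $E[\rho_i]<\infty$ forces the Riesz energies $\int\int|\bx-\by|^b\,\rd\rho_i(\bx)\rd\rho_i(\by)<\infty$; since $|\bx|^b$ is positive-definite for $0<-b<d$, the Cauchy--Schwarz-type bound $2\int\int|\bx-\by|^b\rd\rho_0\rd\rho_1\le\int\int|\bx-\by|^b\rd\rho_0\rd\rho_0+\int\int|\bx-\by|^b\rd\rho_1\rd\rho_1$ controls the cross term, and $E[|\mu|]<\infty$ follows. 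The case $b=0$ is identical with $-\ln|\bx|$, which is conditionally positive-definite (apply the last inequality to the mean-zero measure $\rho_0-\rho_1$), and the cases $b>0$ (for either $W_{a,b}$ or $W_{b,\ln}$) are trivial, the kernel being then locally bounded. The one genuinely delicate case is $W_{b,\ln}$ with $b<0$: here $\frac{|\bx|^b}{b}\ln|\bx|$ is a logarithmically weighted Riesz kernel whose Fourier transform changes sign near the origin (Theorem \ref{thm_powerF}), so it is not conditionally positive-definite; I would instead squeeze it between pure Riesz kernels of slightly lower and slightly higher order on small balls and run the same argument with an $\epsilon$-loss absorbed by shrinking the radius.

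Finally, for \eqref{thm_RLICab_2} the extra hypothesis $a<4$ (resp. $b<4$) is precisely what makes Theorem \ref{thm_Fourep} applicable, so $W_{a,b}$ with $-d<b<a<4$, resp. $W_{b,\ln}$ with $-d<b<4$, $b\ne0$, are Fourier representable at level $2$. The competitors $\mu$ in \eqref{thm_RLICab_1} are exactly the compactly supported signed measures with $E[|\mu|]<\infty$ and $\int\rd\mu=\int\bx\rd\mu=0$, which are the hypotheses for level $2$; hence $E[\mu]=\tfrac12\int_{\mathbb{R}^d\setminus\{0\}}\hat{W}(\xi)|\hat{\mu}(\xi)|^2\rd\xi$ for all such $\mu$, and ``$E[\mu]>0$'' may be replaced by ``$\int_{\mathbb{R}^d\setminus\{0\}}\hat{W}(\xi)|\hat{\mu}(\xi)|^2\rd\xi>0$'' inside the supremum of \eqref{thm_RLICab_1}, giving \eqref{thm_RLICab_2}.
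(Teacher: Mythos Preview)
Your overall plan matches the paper: reduce strict convexity along $(1-t)\rho_0+t\rho_1$ to the sign of $E[\mu]$ with $\mu=\rho_1-\rho_0$, then match up competitors, with the only real work being the finiteness implication $E[\rho_i]<\infty\Rightarrow E[|\mu|]<\infty$. Your treatment of $W_{a,b}$ (positive-definiteness of $|\bx|^b$ for $-d<b<0$, Cauchy--Schwarz for the cross term, the $b\ge0$ cases being easy) is essentially the paper's argument; the paper phrases it as $E_b[(\rho_1-\rho_0)*\phi_\epsilon]\ge0$, then passes to the limit via Lemmas~\ref{lem_W1}--\ref{lem_W2}, which is the mollification step you left implicit.

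The gap is in the case $W_{b,\ln}$ with $-d<b<0$. Your pointwise squeeze $c_1|\bx|^b\le W_{b,\ln}(\bx)\le c_2|\bx|^{b-\epsilon}$ on small balls is correct, but it does not close: from $E_{b,\ln}[\rho_i]<\infty$ you only get $\int\!\!\int|\bx-\by|^b\rd\rho_i\rd\rho_i<\infty$ via the lower bound, whereas controlling the cross term through your upper bound would require $\int\!\!\int|\bx-\by|^{b-\epsilon}\rd\rho_i\rd\rho_i<\infty$, and this can genuinely fail (e.g.\ if the pushforward of $\rho_i\otimes\rho_i$ under $|\bx-\by|$ has density $\sim r^{-1-b}(\log(1/r))^{-3}$ near $r=0$, the $W_{b,\ln}$-energy is finite but the $(b-\epsilon)$-Riesz energy diverges). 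Shrinking any radius does not help, since the obstruction sits at the origin of the kernel.

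The paper's fix is different and goes through the Fourier side directly. Since $W_{b,\ln}$ with $-d<b<0$ is Fourier representable at level~$0$ (Theorem~\ref{thm_Fourep}), one has
\[
E[(\rho_0+\rho_1)*\phi_\epsilon]=\tfrac12\int\hat W\,|\hat\rho_0+\hat\rho_1|^2|\hat\phi_\epsilon|^2
\le \int_{\{\hat W>0\}}\hat W\big(|\hat\rho_0|^2+|\hat\rho_1|^2\big)|\hat\phi_\epsilon|^2.
\]
Although $\hat W_{b,\ln}(\xi)=|\xi|^{-d-b}\big(\tcpf(-b)+\cpf(-b)\ln|\xi|\big)$ is negative on a ball about the origin (so the kernel is not conditionally positive-definite, as you noted), on that ball $\hat W$ is \emph{integrable} because $-d-b>-d$, and $|\hat\rho_i|$ is bounded there; hence $\int_{\{\hat W<0\}}\hat W|\hat\rho_i|^2$ is finite, and therefore so is $\int_{\{\hat W>0\}}\hat W|\hat\rho_i|^2=2E[\rho_i]-\int_{\{\hat W<0\}}\hat W|\hat\rho_i|^2$. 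Sending $\epsilon\to0$ in the display and invoking Lemma~\ref{lem_W2} yields $E[\rho_0+\rho_1]<\infty$. In short: rather than dominating the singular positive part of $W_{b,\ln}$ (too crude), the paper exploits that its \emph{negative} Fourier part is mild.
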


\begin{proof}
	It is clear that these potentials satisfy {\bf (W0)}, {\bf (W1)}. We claim that they satisfy the property:
	\begin{equation}\label{claim_rho01}\begin{split}
	& \text{For any compactly supported $\rho_0,\rho_1\in\cM(\mathbb{R}^d)$ with $E[\rho_i]<\infty,\,i=0,1$,}\\
	& \text{there holds $E[\rho_0+\rho_1] < \infty$.	}
	\end{split}\end{equation}
	
	To prove the claim for $W_{a,b}$ with $-d<b<a$, it suffices to treat $W_b(\bx)=-\frac{|\bx|^b}{b}$ since its singularity at 0 is comparable to $W_{a,b}$ . The case $b>0$ is trivial because $W_b$ is continuous. For $-d<b\le 0$, $W_b$ is Fourier representable at level 1 by Theorem \ref{thm_Fourep}. Take a mollifier $\phi_\epsilon$. Then $(\rho_1-\rho_0)*\phi_\epsilon$ is a compactly supported mean-zero smooth function. Therefore its Fourier representation for $W_b$ gives
	\begin{equation}
		E_b [(\rho_1-\rho_0)*\phi_\epsilon] = \frac{1}{2}\int_{\mathbb{R}^d\backslash\{0\}} \hat{W}_b(\xi)|\hat{\rho}_1(\xi)-\hat{\rho}_0(\xi)|^2|\hat{\phi}_\epsilon(\xi)|^2\rd{\xi} \ge 0\,,
	\end{equation}
	since $\hat{W}_b>0$. It follows that 
	\begin{equation}
		E[(\rho_0+\rho_1)*\phi_\epsilon] \le 2(E[\rho_0*\phi_\epsilon]+E[\rho_1*\phi_\epsilon])\,,
	\end{equation}
	by convexity. Then sending $\epsilon\rightarrow 0$, the RHS converges to $2(E[\rho_0]+E[\rho_1])$ by Lemma \ref{lem_W1}, which implies $E[\rho_0+\rho_1] < \infty$ due to Lemma \ref{lem_W2}.
	
	Then we prove the claim for $W_{b,\ln}$ with $b>-d,\,b\ne 0$. The case $b>0$ is trivial because $W_{b,\ln}$ is continuous. For $-d<b<0$, $W_{b,\ln}$ is Fourier representable at level 0 by Theorem \ref{thm_Fourep}. Therefore
	\begin{equation}
		E[\rho_i] = \frac{1}{2}\int_{\mathbb{R}^d\backslash\{0\}} \hat{W}(\xi)|\hat{\rho}_i(\xi)|^2\rd{\xi},\quad E[\rho_i*\phi_\epsilon] = \frac{1}{2}\int_{\mathbb{R}^d\backslash\{0\}} \hat{W}(\xi)|\hat{\rho}_i(\xi)|^2|\hat{\phi}_\epsilon(\xi)|^2\rd{\xi}\,,
	\end{equation}
	and
	\begin{equation}\begin{split}
		E[(\rho_0+\rho_1)*\phi_\epsilon] = & \frac{1}{2}\int_{\mathbb{R}^d\backslash\{0\}} \hat{W}(\xi)|\hat{\rho}_0(\xi)+\hat{\rho}_1(\xi)|^2|\hat{\phi}_\epsilon(\xi)|^2\rd{\xi} \\
		\le & \frac{1}{2}\int_{\hat{W}>0} \hat{W}(\xi)|\hat{\rho}_0(\xi)+\hat{\rho}_1(\xi)|^2|\hat{\phi}_\epsilon(\xi)|^2\rd{\xi} \\
		\le & \int_{\hat{W}>0} \hat{W}(\xi)|\hat{\rho}_0(\xi)|^2|\hat{\phi}_\epsilon(\xi)|^2\rd{\xi} + \int_{\hat{W}>0} \hat{W}(\xi)|\hat{\rho}_1(\xi)|^2|\hat{\phi}_\epsilon(\xi)|^2\rd{\xi}\,.\\
	\end{split}\end{equation}
	Sending $\epsilon\rightarrow 0$, the RHS converges to $\int_{\hat{W}>0} \hat{W}(\xi)|\hat{\rho}_0(\xi)|^2\rd{\xi} + \int_{\hat{W}>0} \hat{W}(\xi)|\hat{\rho}_1(\xi)|^2\rd{\xi}$ which is finite since $E[\rho_i]$ are finite. Therefore $E[\rho_0+\rho_1] < \infty$ due to Lemma \ref{lem_W2}.
	
	Then we take any distinct $\rho_0,\rho_1\in\cM(\overline{B(0;R)})$ with $E[\rho_0]<\infty,\,E[\rho_1]<\infty$ and $\int_{\mathbb{R}^d}\bx\rd{\rho_0(\bx)}=\int_{\mathbb{R}^d}\bx\rd{\rho_1(\bx)}=0$. For the previously stated potentials, we have $E[\rho_0+\rho_1]<\infty$, which implies $E[|\rho_1-\rho_0|]<\infty$. Therefore one can calculate
	\begin{equation}
		\frac{\rd^2}{\rd t^2} E[(1-t)\rho_0+t\rho_1] = 2E[\rho_1-\rho_0]\,.
	\end{equation}
	Since $\mu=\rho_1-\rho_0$ is a nontrivial signed measure supported on $\overline{B(0;R)}$ with $E[|\mu|]<\infty$ and $\int_{\mathbb{R}^d}\rd{\mu(\bx)}=\int_{\mathbb{R}^d}\bx\rd{\mu(\bx)}=0$, we see that $R_{\textnormal{LIC}}[W]$ is at least the RHS of \eqref{thm_RLICab_1}. To see that $R_{\textnormal{LIC}}[W]$ is at most this quantity, we take any $\mu$ as described above, then one can write $\mu = C(\rho_1-\rho_0)$ where $C>0$ and $\rho_0,\rho_1\in\cM(\overline{B(0;R)})$ with $E[\rho_0]<\infty,\,E[\rho_1]<\infty$ and $\int_{\mathbb{R}^d}\bx\rd{\rho_0(\bx)}=\int_{\mathbb{R}^d}\bx\rd{\rho_1(\bx)}=0$. In fact, one can take $\rho_1=\frac{1}{C}(\mu_+ + f)$ and $\rho_0=\frac{1}{C}(\mu_- + f)$ where $f$ is a nonnegative continuous function supported on $\overline{B(0;R)}$ such that $\rho_0,\rho_1$ satisfy the condition on the center of mass, and $C>0$ is a normalizing constant. Then, using the definition of $R_{\textnormal{LIC}}[W]$, we see that $R_{\textnormal{LIC}}[W]$ is at most the RHS of \eqref{thm_RLICab_1}.
	
	Finally \eqref{thm_RLICab_2} follows from \eqref{thm_RLICab_1} and the Fourier representable property at level 2 for $W_{a,b}$ with $-d<b<a<4$ or $W_{b,\ln}$ with $-d<b<4,\,b\ne 0$, which was given by Theorem \ref{thm_Fourep}.
\end{proof}

\begin{remark}
	From the proof, it is clear that \eqref{thm_RLICab_1} holds for any potential that satisfies {\bf (W0)} and \eqref{claim_rho01}. The condition \eqref{claim_rho01} is indeed very mild. For example, the potentials discussed in Remark \ref{rmk_W1} satisfy this condition. Also, if one further assumes the Fourier representable property at level 2, then \eqref{thm_RLICab_2} holds.
\end{remark}

\subsection{Proof of Theorem \ref{thm_Wbln}}\label{sec_proof2ln}

	By Theorem \ref{thm_RLICab}, $R_\textnormal{LIC}[W_{2,\ln}]$ can be calculated by \eqref{thm_RLICab_2}. By Theorem \ref{thm_powerF} we have
	\begin{equation}
		\cF[W_{2,\ln}] = \tcpf(-2)|\xi|^{-d-2},\quad \xi\ne 0\,,
	\end{equation}
	where 
	\begin{equation}\label{eqtcpf2}
		\tcpf(-2) = \cpf'(-2) = \pi^{-2-\frac{d}{2}}\frac{\Gamma(\frac{d+2}{2})}{4} > 0\,,
	\end{equation}
	(c.f. Remark \ref{rmk_cpf}). It follows that $R_\textnormal{LIC}[W_{2,\ln}]=\infty$, i.e., $W_{2,\ln}$ is LIC, and thus its energy minimizer is unique up to translation. By Theorem \ref{thm_EL}, to find the energy minimizer for $W_{2,\ln}$, it suffices to find $\rho\in\cM(\mathbb{R}^d)$ such that the Euler-Lagrange condition \eqref{thm_EL_1}\eqref{thm_EL_2} holds. 
	
	We first assume $d=1$. \cite[Theorem 1]{Fra} showed that the minimizer for $W_{a,2}$ with $2<a<3$ is
	\begin{equation}
		C(R_a^2 - x^2)_+^{-\frac{a-1}{2}},\quad R_a = \Big( \frac{\Gamma(\frac{3-a}{2})\sin\big((a-1)\frac{\pi}{2}\big)}{\Gamma(\frac{4-a}{2})(a-1)\sqrt{\pi}}\Big)^{\frac{1}{a-2}}\,.
	\end{equation}
	This implies the Euler-Lagrange condition (noticing here that $W_{a,2}*(R_a^2 - x^2)_+^{-\frac{a-1}{2}}$ is continuous)
	\begin{equation}
		W_{a,2}*(R_a^2 - x^2)_+^{-\frac{a-1}{2}} = C_a,\quad \text{ on }[-R_a,R_a]\,,
	\end{equation}
	and 
	\begin{equation}
		W_{a,2}*(R_a^2 - x^2)_+^{-\frac{a-1}{2}} \ge C_a,\quad \text{ on }\mathbb{R}\,,
	\end{equation}
	for some $C_a\in\mathbb{R}$. Rescaling by $R_a$ and denoting
	\begin{equation}
		\tilde{W}_{a,2}(x) = \frac{|x|^a}{2} - \frac{a}{2R_a^{a-2}}\cdot \frac{|x|^2}{2}\,,
	\end{equation}
	we see that
	\begin{equation}\label{tWa21}
		\tilde{W}_{a,2}*(1 - x^2)_+^{-\frac{a-1}{2}} = \tilde{C}_a,\quad \text{ on }[-1,1]\,,	
	\end{equation}
	and 
	\begin{equation}\label{tWa22}
		\tilde{W}_{a,2}*(1 - x^2)_+^{-\frac{a-1}{2}} \ge \tilde{C}_a,\quad \text{ on 	}\mathbb{R}\,.
	\end{equation}
	By rewriting 
	\begin{equation}
		\tilde{W}_{a,2}(x) = \frac{|x|^a-|x|^2}{2} + \Big(1-\frac{a}{2R_a^{a-2}}\Big)\cdot \frac{|x|^2}{2}\,,
	\end{equation}
	we see that 
	\begin{equation}
		\lim_{a\rightarrow 2^+}\frac{1}{a-2}\tilde{W}_{a,2}(x) = \frac{|x|^2}{2}\ln|x| +  \lambda\frac{|x|^2}{2}\,,
	\end{equation}
	uniformly on compact sets for $x$, where
	\begin{equation}
		\lambda := \frac{\rd}{\rd{a}}\Big|_{a=2} \Big(1-\frac{a}{2R_a^{a-2}}\Big) = -\frac{1}{2} + \frac{\rd}{\rd{a}}\Big|_{a=2} \Big( \frac{\Gamma(\frac{3-a}{2})\sin\big((a-1)\frac{\pi}{2}\big)}{\Gamma(\frac{4-a}{2})(a-1)\sqrt{\pi}}\Big)\,.
	\end{equation}
	Then we may take the limit $a\rightarrow 2^+$ in \eqref{tWa21} and \eqref{tWa22} to get
	\begin{equation}
		\Big(\frac{|x|^2}{2}\ln|x| +  \lambda\frac{|x|^2}{2}\Big)*(1 - x^2)_+^{-\frac{1}{2}} = \tilde{C},\quad \text{ on }[-1,1]\,,
	\end{equation}
	and 
	\begin{equation}
		\Big(\frac{|x|^2}{2}\ln|x| +  \lambda\frac{|x|^2}{2}\Big)*(1 - x^2)_+^{-\frac{1}{2}} \ge \tilde{C},\quad \text{ on 		}\mathbb{R}\,.
	\end{equation}
	Rescaling by $e^{-\lambda}$, we get \eqref{thm_EL_1}\eqref{thm_EL_2} for the potential $\frac{|x|^2}{2}\ln|x|$ and the probability measure $C(R^2-x^2)_+^{-1/2}$ with $R=e^\lambda$, which gives the conclusion for $d=1$.
	
	For $d\ge 2$, \cite[Theorem 1]{FM} showed that the minimizer for $W_{a,2}$ with $2<a<4$ is $\delta_{\partial B(0;R_a)}$ where
	\begin{equation}
		R_a = \Big(\frac{\Gamma(\frac{d+1}{2})\Gamma(\frac{2d+a-2}{2})}{2^{a-2}\Gamma(\frac{d+a-1}{2})\Gamma(d)}\Big)^{\frac{1}{a-2}}\,.
	\end{equation}
	The same rescaling and limiting procedure as before would yield the conclusion.
	
\section{Poincar\'e-type inequalities for signed measures}\label{sec_poin}

Using Theorem \ref{thm_RLICab}, we reduced the study of the LIC radius for $W_{a,b}$ with $-d<b<a<4$ to determining the sign of $\int_{\mathbb{R}^d\backslash \{0\}} \hat{W}_{a,b}(\xi)|\hat{\mu}(\xi)|^2\rd{\xi}$ for compactly supported signed measures $\mu$ with vanishing moments. $\hat{W}_{a,b}$ is again the difference of two power functions in $|\xi|$. This motivates us to compare the sizes of $\int_{\mathbb{R}^d\backslash \{0\}} |\xi|^{-s}|\hat{\mu}(\xi)|^2\rd{\xi}$ for different values of $s$, i.e., compare the negative Sobolev norms of $\mu$ via the following Poincar\'e-type inequality.

\begin{theorem}\label{thm_powercomp}
Let $\mu$ be a signed measure with $\supp\mu\subset \overline{B(0;R)}$ and $\int_{\mathbb{R}^d}\rd{\mu(\bx)}=\int_{\mathbb{R}^d}\bx\rd{\mu(\bx)}=0$. Then, for any $0\le \beta<\alpha<d+4$,
\begin{equation}
	\int_{\mathbb{R}^d}|\xi|^{-\alpha}|\hat{\mu}(\xi)|^2\rd{\xi} \le C_{\alpha,\beta} R^{\alpha-\beta}\int_{\mathbb{R}^d}|\xi|^{-\beta}|\hat{\mu}(\xi)|^2\rd{\xi}\,,
\end{equation}
for some $C_{\alpha,\beta}>0$.
\end{theorem}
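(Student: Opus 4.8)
The plan is to reduce to $R=1$ by scaling, split the frequency integral at $|\xi|=1$, handle $|\xi|\ge1$ in one line, and concentrate all the work in an uncertainty-type bound for the low-frequency part $|\xi|<1$.

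\emph{Scaling and the easy half.} I would first replace $\mu$ by its push-forward $\nu$ under $\bx\mapsto\bx/R$: then $\supp\nu\subset\overline{B(0;1)}$, $\nu$ still has vanishing zeroth and first moments, $\hat\nu(\xi)=\hat\mu(\xi/R)$, and $\int_{\mathbb R^d}|\xi|^{-s}|\hat\nu(\xi)|^2\rd\xi=R^{d-s}\int_{\mathbb R^d}|\xi|^{-s}|\hat\mu(\xi)|^2\rd\xi$ for every $s$. So it suffices to prove the inequality for $R=1$ with a constant depending only on $\alpha,\beta,d$; undoing the scaling then produces the factor $R^{\alpha-\beta}$. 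For $|\xi|\ge1$ we have $|\xi|^{-\alpha}\le|\xi|^{-\beta}$ since $\alpha>\beta\ge0$, hence $\int_{|\xi|\ge1}|\xi|^{-\alpha}|\hat\mu|^2\rd\xi\le\mathcal E$, where $\mathcal E:=\int_{\mathbb R^d}|\xi|^{-\beta}|\hat\mu|^2\rd\xi$ (which we may assume finite, else there is nothing to prove). Everything reduces to showing $\int_{|\xi|<1}|\xi|^{-\alpha}|\hat\mu|^2\rd\xi\lesssim_{\alpha,\beta,d}\mathcal E$.

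\emph{The low-frequency estimate.} Here I would exploit the compact support together with the moment cancellation as follows. Fix a truncation function $\Phi$ (in the sense of the last paragraph of Section~\ref{sec_intro}) and set $\Psi=\Phi_2\in C_c^\infty$, so $\Psi\equiv1$ on a neighborhood of $\overline{B(0;1)}$ and $\hat\Psi$ is Schwartz. Since $\supp\mu\subset\overline{B(0;1)}$ we have $\mu=\Psi\mu$, whence the reproducing identity $\hat\mu=\hat\Psi*\hat\mu$, and so $\partial_i\partial_j\hat\mu=(\partial_i\partial_j\hat\Psi)*\hat\mu$. With $g(\eta):=|\eta|^{-\beta}|\hat\mu(\eta)|^2$ (so $\|g\|_{L^1}=\mathcal E$), writing $|\hat\mu|=|\eta|^{\beta/2}\cdot|\eta|^{-\beta/2}|\hat\mu|$ inside the convolution and applying Cauchy--Schwarz yields, for $|\zeta|\le1$,
\begin{equation}
	|\partial_i\partial_j\hat\mu(\zeta)|^2\le\Big(\int_{\mathbb R^d}|\partial_i\partial_j\hat\Psi(\zeta-\eta)|\,|\eta|^\beta\rd\eta\Big)\,\big(|\partial_i\partial_j\hat\Psi|*g\big)(\zeta)\le C_\Psi\,\mathcal E\,,
\end{equation}
where $C_\Psi<\infty$ since $\nabla^2\hat\Psi$ is Schwartz (making the first factor bounded for $|\zeta|\le1$) and $|\partial_i\partial_j\hat\Psi|*g\le\|\nabla^2\hat\Psi\|_{L^\infty}\|g\|_{L^1}$. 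Hence $\sup_{|\zeta|\le1}\|\nabla^2\hat\mu(\zeta)\|^2\lesssim_{d,\Psi}\mathcal E$. On the other hand, $\int\rd\mu=0$ and $\int\bx\rd\mu=0$ give $\hat\mu(0)=0$ and $\nabla\hat\mu(0)=0$, so Taylor's formula with integral remainder and Cauchy--Schwarz in $t$ give, for $|\xi|<1$,
\begin{equation}
	|\hat\mu(\xi)|^2=\Big|\int_0^1(1-t)\,\xi^\top\nabla^2\hat\mu(t\xi)\,\xi\rd t\Big|^2\le\frac13|\xi|^4\int_0^1\|\nabla^2\hat\mu(t\xi)\|^2\rd t\lesssim_{d,\Psi}|\xi|^4\,\mathcal E\,.
\end{equation}
Multiplying by $|\xi|^{-\alpha}$ and integrating over $|\xi|<1$ gives $\int_{|\xi|<1}|\xi|^{-\alpha}|\hat\mu|^2\rd\xi\lesssim\mathcal E\int_{|\xi|<1}|\xi|^{4-\alpha}\rd\xi$, and the last integral is finite precisely because $\alpha<d+4$. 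Together with the bound on $|\xi|\ge1$, this finishes the proof.

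\emph{Main obstacle.} The one genuinely delicate point is this low-frequency step. The naive pointwise bound $|\hat\mu(\xi)|\le2\pi^2|\xi|^2\int_{\mathbb R^d}|\bx|^2\rd|\mu|$, which follows immediately from the moment cancellation, is far too lossy: $\int|\bx|^2\rd|\mu|$ is not comparable to $\mathcal E$ uniformly in $\mu$ (for a rapidly oscillating $\mu$ it stays of order one while $\mathcal E\to0$), so it cannot be fed into a comparison with $\mathcal E$. The resolution is to bound $\nabla^2\hat\mu$ near the origin not via moments of $\mu$ but via the reproducing identity $\hat\mu=\hat\Psi*\hat\mu$ — which is exactly where the compact support of $\mu$ (a form of the uncertainty principle) and the smoothing step enter — thereby converting the pointwise estimate into one controlled by the $L^2$-type quantity $\mathcal E$. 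I expect the rest (the scaling bookkeeping, the high-frequency part, and the Taylor estimate) to be routine.
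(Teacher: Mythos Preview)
Your argument is correct and takes a genuinely different route from the paper's. The paper mollifies $\mu$ to $\nu=\phi*\mu$, bounds $|\hat\nu(\xi)|\lesssim\|\nu\|_{L^2}|\xi|^2$ near the origin from the vanishing moments, and then closes the estimate by comparing the small-$|\xi|$ loss (controlled by $\|\nu\|_{L^2}^2$) against the large-$|\xi|$ gain (controlled by $\int|\xi|^2|\hat\nu|^2\rd\xi$) via the Heisenberg uncertainty principle $\int|\xi|^2|\hat\nu|^2\rd\xi\gtrsim\|\nu\|_{L^2}^2$, with a balancing parameter $\lambda$. You instead use the reproducing identity $\hat\mu=\hat\Psi*\hat\mu$ and a single weighted Cauchy--Schwarz to bound $\sup_{|\zeta|\le1}|\nabla^2\hat\mu(\zeta)|^2$ directly by $\mathcal E$, then feed this into the Taylor remainder. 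Your approach is more elementary---it bypasses both the mollification and the Heisenberg inequality---and still delivers what the paper needs downstream: your constant depends on $\alpha$ only through $\int_{|\xi|<1}|\xi|^{4-\alpha}\rd\xi\sim(d+4-\alpha)^{-1}$ and on $\beta$ only through $\sup_{|\zeta|\le1}\int|\nabla^2\hat\Psi(\zeta-\eta)|\,|\eta|^\beta\rd\eta$, so $C_{\alpha,\beta}$ is uniformly bounded for $\alpha$ bounded away from $d+4$ (cf.\ Remark~\ref{rmk_powercomp}), which is exactly the uniformity invoked in the proof of Theorem~\ref{thm_main1}. The paper's route has the modest advantage of producing an explicit closed-form expression for the constant (see~\eqref{lamfinal}), but qualitatively the two bounds behave the same way.
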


\begin{proof}
By rescaling, we may assume $R=1$. In fact, for a general $\mu$ with $\int_{\mathbb{R}^d}\rd{\mu(\bx)}=\int_{\mathbb{R}^d}\bx\rd{\mu(\bx)}=0$ and $\supp\mu\subset \overline{B(0;R)}$, take $\tilde{\mu}(\bx) = \mu(R\bx)$, then $\supp\tilde{\mu}\subset \overline{B(0;1)}$, with $\hat{\tilde{\mu}}(\xi) = R^{-d}\hat{\mu}(\frac{\xi}{R})$,
\begin{equation}
	\int_{\mathbb{R}^d}|\xi|^{-\alpha}|\hat{\mu}(\xi)|^2\rd{\xi} = R^{d+\alpha}\int_{\mathbb{R}^d}|\xi|^{-\alpha}|\hat{\tilde{\mu}}(\xi)|^2\rd{\xi}\,,
\end{equation}
and similar for the $\beta$ term. Then the conclusion for $\mu$ may follow from that for $\tilde{\mu}$.

Let $\phi$ be a mollifier (see the last paragraph of Section \ref{sec_intro}). Then $\hat{\phi}$ is a radial smooth function with rapid decay, with $\hat{\phi}(0) = 1$. Using the fact that $(\nabla \hat{\phi})(\xi) =  \int_{\overline{B(0;1)}}-2\pi i \bx \phi(\bx)e^{-2\pi i \bx\cdot\xi}\rd{\bx}$, we see that $\|\nabla \hat{\phi}\|_{L^\infty} \le 2\pi$, and thus
\begin{equation}\label{Phihatlow}
	\hat{\phi}(\xi) \ge \frac{1}{2},\quad\forall |\xi| \le \frac{1}{4\pi}\,.
\end{equation}

Define
\begin{equation}
	\nu = \phi*\mu\,.
\end{equation}
Then $\nu$ is a smooth function supported on $\overline{B(0;2)}$, with $\int_{\mathbb{R}^d}\nu(\bx)\rd{\bx}=\int_{\mathbb{R}^d}\bx\nu(\bx)\rd{\bx}=0$. It follows that
\begin{equation}
	\hat{\nu}(0) = \nabla \hat{\nu}(0) = 0\,,
\end{equation}
and
\begin{equation}
	\left|\frac{\partial^2}{\partial \xi_j \partial \xi_k}\hat{\nu}(\xi)\right| = \left|\int_{\overline{B(0;2)}}-4\pi^2 x_jx_k \nu(\bx)e^{-2\pi i \bx\cdot\xi}\rd{\bx}\right| \le 4\pi^2 \cdot 4 \|\nu\|_{L^1} \le 16\pi^2 |B(0;2)|^{1/2}\|\nu\|_{L^2}\,,
\end{equation}
for any $\xi\in\mathbb{R}^d$. It follows that the maximal possible eigenvalue (in absolute value) of $\nabla^2 \hat{\nu}(\xi)$ is no more than $16\pi^2 |B(0;2)|^{1/2}\|\nu\|_{L^2}\cdot d$, and thus
\begin{equation}\label{hatnu}
	|\hat{\nu}(\xi)| \le 8\pi^2 d |B(0;2)|^{1/2}\|\nu\|_{L^2} \cdot |\xi|^2\,,
\end{equation}
for any $\xi\in\mathbb{R}^d$.

Take $\lambda>0$ to be determined, and we consider the quantity
\begin{equation}
	A = \int_{\mathbb{R}^d}(|\xi|^{-\beta}-\lambda|\xi|^{-\alpha}) |\hat{\mu}(\xi)|^2\rd{\xi}\,,
\end{equation}
and aim to prove $A\ge 0$ for properly chosen $\lambda$. We first notice that
\begin{equation}\label{rlam}
	|\xi|^{-\beta}-\lambda|\xi|^{-\alpha} \ge \frac{1}{2}|\xi|^{-\beta},\quad \forall |\xi| \ge r_\lambda\,,
\end{equation}
where 
\begin{equation}
	r_\lambda = (2\lambda)^{\frac{1}{\alpha-\beta}}\,.
\end{equation}
We will take $\lambda$ satisfying
\begin{equation}
	\lambda \le \frac{1}{2(4\pi)^{\alpha-\beta}}\,,
\end{equation}
so that 
\begin{equation}\label{rlam4pi}
	r_\lambda \le \frac{1}{4\pi}\,.
\end{equation}

For small $\xi$ values, we first estimate the negative quantity
\begin{equation}
	-\lambda \int_{B(0;r_\lambda)}|\xi|^{-\alpha}|\hat{\mu}(\xi)|^2\rd{\xi}\,.
\end{equation}
Here, using \eqref{Phihatlow}\eqref{rlam4pi}\eqref{hatnu}, the last integral can be estimated by
\begin{equation}\begin{split}
		\int_{B(0;r_\lambda)}|\xi|^{-\alpha}|\hat{\mu}(\xi)|^2\rd{\xi} \le & 4\int_{B(0;r_\lambda)}|\xi|^{-\alpha}|\hat{\mu}(\xi)|^2|\hat{\phi}(\xi)|^2\rd{\xi} = 4\int_{B(0;r_\lambda)}|\xi|^{-\alpha}|\hat{\nu}(\xi)|^2\rd{\xi} \\
		\le & 4(8\pi^2 d |B(0;2)|^{1/2}\|\nu\|_{L^2})^2\int_{B(0;r_\lambda)}|\xi|^{-\alpha} |\xi|^4\rd{\xi} \\
		= & 4(8\pi^2 d)^2 |B(0;2)| \|\nu\|_{L^2}^2 \cdot |\partial B(0;1)| \frac{1}{4-\alpha+d}r_\lambda^{4-\alpha+d}\,.
\end{split}\end{equation}
Therefore we conclude
\begin{equation}\label{Aest1}
	-\lambda \int_{B(0;r_\lambda)}|\xi|^{-\alpha}|\hat{\mu}(\xi)|^2\rd{\xi} \ge -C_1 \lambda r_\lambda^{4-\alpha+d} \|\nu\|_{L^2}^2 \,,
\end{equation}
where
\begin{equation}\label{eqC1}
	C_1 = 4(8\pi^2 d)^2 |B(0;2)|\cdot |\partial B(0;1)| \frac{1}{4-\alpha+d}\,.
\end{equation}

For large $\xi$ values, together with positive contribution from small $\xi$, we estimate as
\begin{equation}
	\int_{B(0;r_\lambda)} |\xi|^{-\beta}|\hat{\mu}(\xi)|^2\rd{\xi} +  \int_{B(0;r_\lambda)^c}(|\xi|^{-\beta}-\lambda|\xi|^{-\alpha}) |\hat{\mu}(\xi)|^2\rd{\xi} \ge \frac{1}{2}\int_{\mathbb{R}^d}|\xi|^{-\beta} |\hat{\mu}(\xi)|^2\rd{\xi}\,,
\end{equation}
by \eqref{rlam}. Notice that $|\xi|^{2+\beta}|\hat{\phi}(\xi)|^2$ is a continuous radial function which decays at infinity, and thus it is bounded, and then
\begin{equation}\label{eqC20}
	|\xi|^{-\beta} \ge \frac{2}{C_2} |\xi|^2|\hat{\phi}(\xi)|^2,\quad \forall \xi\ne 0\,,
\end{equation}
where 
\begin{equation}\label{eqC2}
	C_2 = 2\sup_{\xi\in\mathbb{R}^d} |\xi|^{2+\beta}|\hat{\phi}(\xi)|^2\,.
\end{equation}
It follows that 
\begin{equation}\begin{split}
		 \int_{B(0;r_\lambda)} & |\xi|^{-\beta}|\hat{\mu}(\xi)|^2\rd{\xi} +  \int_{B(0;r_\lambda)^c}(|\xi|^{-\beta}-\lambda|\xi|^{-\alpha}) |\hat{\mu}(\xi)|^2\rd{\xi} \\
		\ge & \frac{1}{C_2}\int_{\mathbb{R}^d} |\xi|^2|\hat{\phi}(\xi)|^2 |\hat{\mu}(\xi)|^2\rd{\xi} = \frac{1}{C_2}\int_{\mathbb{R}^d} |\xi|^2 |\hat{\nu}(\xi)|^2\rd{\xi}\,.
\end{split}\end{equation}
Combining with \eqref{Aest1} we get
\begin{equation}
	A \ge \frac{1}{C_2}\int_{\mathbb{R}^d} |\xi|^2 |\hat{\nu}(\xi)|^2\rd{\xi} -C_1 \lambda r_\lambda^{4-\alpha+d} \|\nu\|_{L^2}^2\,.
\end{equation}

Finally we apply the Heisenberg uncertainty principle (c.f. \cite[Corollary 2.8]{FS97}) to $\nu$, which gives
\begin{equation}
	\int_{\mathbb{R}^d} |\bx|^2 |\nu(\bx)|^2\rd{\bx} \cdot \int_{\mathbb{R}^d} |\xi|^2 |\hat{\nu}(\xi)|^2\rd{\xi} \ge \frac{d^2}{16\pi^2}\|\nu\|_{L^2}^4\,.
\end{equation}
Since $\nu$ is supported on $\overline{B(0;2)}$, we see that
\begin{equation}
	\int_{\mathbb{R}^d} |\bx|^2 |\nu(\bx)|^2\rd{\bx} \le 4\|\nu\|_{L^2}^2\,,
\end{equation}
which implies
\begin{equation}\label{eqHei}
	\int_{\mathbb{R}^d} |\xi|^2 |\hat{\nu}(\xi)|^2\rd{\xi} \ge \frac{d^2}{64\pi^2}\|\nu\|_{L^2}^2\,.
\end{equation}
Therefore we get
\begin{equation}
	A \ge \frac{d^2}{64\pi^2 C_2}\|\nu\|_{L^2}^2 -C_1 \lambda r_\lambda^{4-\alpha+d} \|\nu\|_{L^2}^2 = \frac{d^2}{64\pi^2 C_2}\|\nu\|_{L^2}^2\Big(1- 64\pi^2 d^{-2} C_1C_2 \cdot 2^{\frac{4-\alpha+d}{\alpha-\beta}}\lambda^{1+\frac{4-\alpha+d}{\alpha-\beta}}\Big)\,,
\end{equation}
where the power on $\lambda$ is $1+\frac{4-\alpha+d}{\alpha-\beta} = \frac{4-\beta+d}{\alpha-\beta}>0$. By taking
\begin{equation}\label{lamfinal}
	\lambda = \min\left\{\frac{1}{2(4\pi)^{\alpha-\beta}} , (64\pi^2 d^{-2}  C_1C_2 )^{-\frac{\alpha-\beta}{4-\beta+d}}\cdot  2^{-\frac{4-\alpha+d}{4-\beta+d}}\right\}\,,
\end{equation}
we get $A\ge 0$, which gives the conclusion with $C_{\alpha,\beta}=\frac{1}{\lambda}$.

\end{proof}

\begin{remark}[Dependence of $C_{\alpha,\beta}$ on $\beta$ and $\alpha$]\label{rmk_powercomp}
In \eqref{lamfinal} in the above proof, notice that $C_2$, given by \eqref{eqC2}, is uniformly bounded for any $\beta,\alpha$ in the proposed range; $C_1$, given by \eqref{eqC1}, can be large when $\alpha$ is close to $d+4$; the exponents $0<\frac{\alpha-\beta}{4-\beta+d} < 1$, $0<\frac{4-\alpha+d}{4-\beta+d}<1$. Therefore $C_{\alpha,\beta}=\frac{1}{\lambda}$ can get large only when $\alpha$ is close to $d+4$, during which $C_1\sim \frac{1}{d+4-\alpha}$, $\frac{\alpha-\beta}{4-\beta+d}\approx 1$ (for fixed $\beta$) and thus $C_{\alpha,\beta}\sim \frac{1}{d+4-\alpha}$. Other than this issue, $C_{\alpha,\beta}$ is uniformly bounded if $\alpha$ is away from $d+4$.
\end{remark}


Then we give the logarithmic analogue of Theorem \ref{thm_powercomp}.

\begin{theorem}\label{thm_powercomplog}
	Let $\mu$ be a signed measure with $\supp\mu\subset \overline{B(0;R)}$ and $\int_{\mathbb{R}^d}\rd{\mu(\bx)}=\int_{\mathbb{R}^d}\bx\rd{\mu(\bx)}=0$. Then, for any $0\le \beta<d+4$,
	\begin{equation}
		\int_{\mathbb{R}^d}|\xi|^{-\beta}\ln|\xi|\cdot|\hat{\mu}(\xi)|^2\rd{\xi} + (C_\beta+\ln R)\int_{\mathbb{R}^d}|\xi|^{-\beta}|\hat{\mu}(\xi)|^2\rd{\xi} \ge 0\,,
	\end{equation}
	for some $C_\beta>0$.
\end{theorem}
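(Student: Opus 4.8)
The plan is to reduce to $R=1$ and then run an argument parallel to the proof of Theorem~\ref{thm_powercomp}, the only new feature being the control of the region where $|\xi|^{-\beta}\ln|\xi|$ is negative. For the reduction, as in the proof of Theorem~\ref{thm_powercomp} I would set $\tilde\mu(\bx)=\mu(R\bx)$, so that $\supp\tilde\mu\subset\overline{B(0;1)}$, $\hat{\tilde\mu}(\xi)=R^{-d}\hat\mu(\xi/R)$, and $\tilde\mu$ again has vanishing zeroth and first moments. Applying the case $R=1$ of the theorem to $\tilde\mu$ with some constant $C_\beta>0$ and then substituting $\xi=R\zeta$ --- which replaces $\ln|\xi|$ by $\ln R+\ln|\zeta|$ and multiplies both integrals by a common positive power of $R$ --- yields precisely the asserted inequality for $\mu$, with the term $\ln R$ appearing as claimed. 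So it suffices to prove
\[
\int_{\mathbb{R}^d}|\xi|^{-\beta}\ln|\xi|\,|\hat\mu(\xi)|^2\rd\xi \;\ge\; -C_\beta\int_{\mathbb{R}^d}|\xi|^{-\beta}|\hat\mu(\xi)|^2\rd\xi
\]
for $\mu$ supported on $\overline{B(0;1)}$.

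For this I would set $\nu=\phi*\mu$ with $\phi$ a mollifier, exactly as in the proof of Theorem~\ref{thm_powercomp}: then $\nu$ is smooth, supported on $\overline{B(0;2)}$, has vanishing zeroth and first moments, and obeys the quadratic bound \eqref{hatnu}, $|\hat\nu(\xi)|\le 8\pi^2 d\,|B(0;2)|^{1/2}\|\nu\|_{L^2}|\xi|^2$. Split the left-hand side at $|\xi|=\tfrac1{4\pi}$. On $B(0;\tfrac1{4\pi})$ one has $\hat\phi\ge\tfrac12$ by \eqref{Phihatlow}, hence $|\hat\mu|\le 2|\hat\nu|$; since moreover $\ln|\xi|<0$ there and, \emph{because $\beta<d+4$}, the integral $I_\beta:=\int_{B(0;1/(4\pi))}|\xi|^{4-\beta}\bigl|\ln|\xi|\bigr|\rd\xi$ is finite, this region contributes at least $-4(8\pi^2 d)^2|B(0;2)|\,I_\beta\,\|\nu\|_{L^2}^2$. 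On the complement $\ln|\xi|\ge-\ln(4\pi)$, so that region contributes at least $-\ln(4\pi)\int_{\mathbb{R}^d}|\xi|^{-\beta}|\hat\mu|^2\rd\xi$.

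It remains to absorb the $\|\nu\|_{L^2}^2$ term, and here I would reuse the end of the proof of Theorem~\ref{thm_powercomp} verbatim: from \eqref{eqC20}--\eqref{eqC2}, $|\xi|^{-\beta}\ge\tfrac{2}{C_2}|\xi|^2|\hat\phi(\xi)|^2$, whence
\[
\int_{\mathbb{R}^d}|\xi|^{-\beta}|\hat\mu|^2\rd\xi \;\ge\; \tfrac{2}{C_2}\int_{\mathbb{R}^d}|\xi|^2|\hat\nu|^2\rd\xi \;\ge\; \tfrac{d^2}{32\pi^2 C_2}\|\nu\|_{L^2}^2
\]
by the Heisenberg estimate \eqref{eqHei}, i.e.\ $\|\nu\|_{L^2}^2\le\tfrac{32\pi^2 C_2}{d^2}\int_{\mathbb{R}^d}|\xi|^{-\beta}|\hat\mu|^2\rd\xi$. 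Combining the three estimates gives the displayed inequality above with
\[
C_\beta \;=\; \ln(4\pi)+\tfrac{128\pi^2 C_2}{d^2}(8\pi^2 d)^2|B(0;2)|\,I_\beta \;>\;0 ,
\]
which proves the case $R=1$ and hence the theorem.

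I do not expect a serious obstacle here, since the argument is essentially a transcription of the proof of Theorem~\ref{thm_powercomp}. The one point that genuinely uses the hypothesis is the singular negative contribution of $|\xi|^{-\beta}\ln|\xi|$ near the origin: this is exactly what the second-order vanishing of $\hat\mu$ at $0$ --- coming from the two moment conditions together with mollification --- is there to kill, and it works precisely on the range $\beta<d+4$, where $|\xi|^{4-\beta}|\ln|\xi||$ stays integrable at $0$. A secondary point requiring care is the bookkeeping in the rescaling step, so that the coefficient of $\int_{\mathbb{R}^d}|\xi|^{-\beta}|\hat\mu|^2\rd\xi$ emerges as $C_\beta+\ln R$ with the correct sign; this works out because $\xi=R\zeta$ turns $\ln|\xi|$ into $\ln R+\ln|\zeta|$ and scales both integrals by the same positive power of $R$.
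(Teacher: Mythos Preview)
Your proof is correct and uses the same core ingredients as the paper's proof: the rescaling reduction to $R=1$, the mollification $\nu=\phi*\mu$, the quadratic bound \eqref{hatnu} on $\hat\nu$ near the origin, the pointwise comparison \eqref{eqC20}, and the Heisenberg inequality \eqref{eqHei}.

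The one organizational difference is that the paper splits at a variable radius $r_\lambda=e^{-\lambda/2}$ and then chooses $\lambda$ large enough, whereas you split once and for all at $|\xi|=1/(4\pi)$ and absorb the near-origin contribution via the finite integral $I_\beta=\int_{B(0;1/(4\pi))}|\xi|^{4-\beta}\bigl|\ln|\xi|\bigr|\rd\xi$. Your route is slightly more direct (no parameter to optimize). The price is a larger constant: your $C_\beta$ blows up like $(d+4-\beta)^{-2}$ as $\beta\to d+4$, while the paper's optimized choice gives $C_\beta\sim (d+4-\beta)^{-1}\bigl|\ln(d+4-\beta)\bigr|$ (see Remark~\ref{rmk_powercomplog}). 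For the application to Theorem~\ref{thm_main3}, where $\beta=d+b$ stays near $d+2$, this difference is irrelevant.
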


\begin{proof}
	By rescaling, we may assume $R=1$. In fact, for a general $\supp\mu\subset \overline{B(0;R)}$, take $\tilde{\mu}(\bx) = \mu(R\bx)$, then $\supp\tilde{\mu}\subset \overline{B(0;1)}$, with $\hat{\tilde{\mu}}(\xi) = R^{-d}\hat{\mu}(\frac{\xi}{R})$,
	\begin{equation}
		\int_{\mathbb{R}^d}|\xi|^{-\beta}|\hat{\mu}(\xi)|^2\rd{\xi} = R^{d+\beta}\int_{\mathbb{R}^d}|\xi|^{-\beta}|\hat{\tilde{\mu}}(\xi)|^2\rd{\xi}\,,
	\end{equation}
	and
	\begin{equation}
		\int_{\mathbb{R}^d}|\xi|^{-\beta}\ln|\xi|\cdot|\hat{\mu}(\xi)|^2\rd{\xi} + \ln R \int_{\mathbb{R}^d}|\xi|^{-\beta}|\hat{\mu}(\xi)|^2\rd{\xi}= R^{d+\beta}\int_{\mathbb{R}^d}|\xi|^{-\beta}\ln|\xi|\cdot|\hat{\tilde{\mu}}(\xi)|^2\rd{\xi}\,.
	\end{equation}
	Then the conclusion for $\mu$ may follow from that for $\tilde{\mu}$.
	
	Define $\phi$, $\nu$ in the same way as the proof of Theorem \ref{thm_powercomp}. Take $\lambda>0$ to be determined, and we consider the quantity
	\begin{equation}
		A = \int_{\mathbb{R}^d}|\xi|^{-\beta}(\ln|\xi|+\lambda) |\hat{\mu}(\xi)|^2\rd{\xi}\,,
	\end{equation}
	and aim to prove $A\ge 0$ for properly chosen $\lambda\ge 2$. We first notice that
	\begin{equation}\label{rlam1}
		\ln|\xi|+\lambda \ge \frac{\lambda}{2},\quad \forall |\xi| \ge r_\lambda\,,
	\end{equation}
	where 
	\begin{equation}
		r_\lambda = e^{-\lambda/2}\,.
	\end{equation}
	We will take $\lambda$ satisfying
	\begin{equation}
		\lambda \ge 2\ln(4\pi)\,,
	\end{equation}
	so that 
	\begin{equation}\label{rlam4pi1}
		r_\lambda \le \frac{1}{4\pi}\,.
	\end{equation}
	
	For small $\xi$ values, we first estimate the negative quantity
	\begin{equation}
		\int_{B(0;r_\lambda)}|\xi|^{-\beta}\ln|\xi|\cdot|\hat{\mu}(\xi)|^2\rd{\xi}\,.
	\end{equation}
	For any $|\xi|< r_\lambda=e^{-\lambda/2}$, we have $\ln|\xi| < -\frac{\lambda}{2} < 0$, and thus the above integrand is non-positive. Using \eqref{Phihatlow}\eqref{rlam4pi1}\eqref{hatnu}, the last integral can be estimated by
	\begin{equation}\begin{split}
			&\int_{B(0;r_\lambda)}|\xi|^{-\beta}(-\ln|\xi|)\cdot|\hat{\mu}(\xi)|^2\rd{\xi}\\
			\le & 4\int_{B(0;r_\lambda)}|\xi|^{-\beta}(-\ln|\xi|)\cdot|\hat{\mu}(\xi)|^2|\hat{\phi}(\xi)|^2\rd{\xi} = 4\int_{B(0;r_\lambda)}|\xi|^{-\beta}(-\ln|\xi|)\cdot|\hat{\nu}(\xi)|^2\rd{\xi} \\
			\le & 4(8\pi^2 d |B(0;2)|^{1/2}\|\nu\|_{L^2})^2\int_{B(0;r_\lambda)}|\xi|^{-\beta}(-\ln|\xi|)\cdot |\xi|^4\rd{\xi} \\
			= & 4(8\pi^2 d)^2 |B(0;2)| \|\nu\|_{L^2}^2 \cdot |\partial B(0;1)| \Big(\frac{1}{4-\beta+d}r_\lambda^{4-\beta+d}(-\ln r_\lambda) + \frac{1}{(4-\beta+d)^2}r_\lambda^{4-\beta+d}\Big) \\
			\le & 4(8\pi^2 d)^2 |B(0;2)| \|\nu\|_{L^2}^2 \cdot |\partial B(0;1)| \Big(\frac{1}{4-\beta+d}+\frac{1}{(4-\beta+d)^2\ln(4\pi)}\Big)r_\lambda^{4-\beta+d}(-\ln r_\lambda)\,. \\
	\end{split}\end{equation}
	Therefore we conclude
	\begin{equation}\label{Aest1log}
		\int_{B(0;r_\lambda)}|\xi|^{-\beta}\ln|\xi|\cdot|\hat{\mu}(\xi)|^2\rd{\xi} \ge -C_1   r_\lambda^{4-\beta+d}(-\ln r_\lambda) \|\nu\|_{L^2}^2 \,,
	\end{equation}
	where
	\begin{equation}\label{eqC1log}
		C_1 = 4(8\pi^2 d)^2 |B(0;2)|\cdot |\partial B(0;1)| \Big(\frac{1}{4-\beta+d}+\frac{1}{(4-\beta+d)^2\ln(4\pi)}\Big)\,.
	\end{equation}
	
	For large $\xi$ values, together with positive contribution from small $\xi$, we estimate as
	\begin{equation}\begin{split}
		\int_{B(0;r_\lambda)} & \lambda|\xi|^{-\beta}|\hat{\mu}(\xi)|^2\rd{\xi} +  \int_{B(0;r_\lambda)^c}|\xi|^{-\beta}(\ln|\xi|+\lambda) |\hat{\mu}(\xi)|^2\rd{\xi} \\ \ge & \frac{\lambda}{2}\int_{\mathbb{R}^d}|\xi|^{-\beta} |\hat{\mu}(\xi)|^2\rd{\xi} 
		\ge  \frac{\lambda}{C_2}\int_{\mathbb{R}^d} |\xi|^2|\hat{\phi}(\xi)|^2 |\hat{\mu}(\xi)|^2\rd{\xi} = \frac{\lambda}{C_2}\int_{\mathbb{R}^d} |\xi|^2 |\hat{\nu}(\xi)|^2\rd{\xi}\,,
	\end{split}\end{equation}
	by \eqref{rlam1} and \eqref{eqC20}, where $C_2$ is given by \eqref{eqC2}. Combining with \eqref{Aest1log} we get
	\begin{equation}
		A \ge \frac{\lambda}{C_2}\int_{\mathbb{R}^d} |\xi|^2 |\hat{\nu}(\xi)|^2\rd{\xi} -C_1  r_\lambda^{4-\beta+d}(-\ln r_\lambda) \|\nu\|_{L^2}^2\,.
	\end{equation}
	
	Finally we use \eqref{eqHei} from the Heisenberg uncertainty principle and get
	\begin{equation}\begin{split}
			A \ge & \frac{\lambda}{64\pi^2d^{-2} C_2}\|\nu\|_{L^2}^2 -C_1  r_\lambda^{4-\beta+d}(-\ln r_\lambda) \|\nu\|_{L^2}^2 \\
			= & \frac{\lambda}{64\pi^2d^{-2} C_2}\|\nu\|_{L^2}^2\Big(1- 64\pi^2d^{-2}  C_1C_2 \cdot \lambda^{-1}e^{-\lambda(4-\beta+d)/2}\frac{\lambda}{2}\Big) \\
			= & \frac{\lambda}{64\pi^2d^{-2} C_2}\|\nu\|_{L^2}^2\Big(1- 32\pi^2d^{-2}  C_1C_2 \cdot e^{-\lambda(4-\beta+d)/2}\Big) \,,
	\end{split}\end{equation}
	where in the last exponential one has $4-\beta+d>0$. By taking
	\begin{equation}\label{lamfinal1}
		\lambda = \max\left\{2\ln(4\pi) , \frac{2}{4-\beta+d}\ln(32\pi^2d^{-2}  C_1C_2) \right\}\,,
	\end{equation}
	we get $A\ge 0$, which gives the conclusion with $C_{\beta}=\lambda$.
	
\end{proof}

\begin{remark}[Dependence of $C_{\beta}$ on  $\beta$]\label{rmk_powercomplog}
	In \eqref{lamfinal1} in the above proof, notice that $C_2$, given by \eqref{eqC2}, is uniformly bounded for any $\beta$ in the proposed range; $C_1$, given by \eqref{eqC1log}, and the fraction $\frac{2}{4-\beta+d}$  can be large when $\beta$ is close to $d+4$. Therefore $C_{\beta}=\lambda$ can get large only when $\beta$ is close to $d+4$, during which $C_1\sim \frac{1}{(d+4-\beta)^2}$, and thus $C_{\beta}\sim \frac{1}{d+4-\beta}(-\ln(d+4-\beta))$. Other than this issue, $C_{\beta}$ is uniformly bounded if $\beta$ is away from $d+4$.
\end{remark}

\section{Proof of Theorems \ref{thm_main1} and \ref{thm_main3}}\label{sec_proof1}

\begin{proof}[Proof of Theorem \ref{thm_main1}]
Take $a_0 = \max\{1,\frac{b+2}{2}\}$. Then Theorem \ref{thm_existab} shows that $R_*(b,a_0)$ is an upper bound of minimizer size for any $W_{a,b}$ with $a\ge a_0$. 

We claim that for $a_-(b)\in [a_0,2)$ sufficiently close to 2, we have 
\begin{equation}\label{claim_RLIC}
	R_{\textnormal{LIC}}[W_{a,b}] \ge R_*(b,a_0) +1 =: R,\quad \forall a_-(b) \le a < 2\,,
\end{equation}
which would imply the conclusion due to Lemma \ref{lem_LICR}.

By Theorems \ref{thm_RLICab} and \ref{thm_powerF}, it suffices to prove that for any nontrivial signed measure $\mu$ supported on $\overline{B(0;R)}$ with $E_{a,b}[|\mu|]<\infty$ and $\int_{\mathbb{R}^d}\rd{\mu(\bx)}=\int_{\mathbb{R}^d}\bx\rd{\mu(\bx)}=0$, there holds
\begin{equation}\label{EabFou}
	\int_{\mathbb{R}^d\backslash \{0\}} \hat{W}_{a,b}(\xi)|\hat{\mu}(\xi)|^2\rd{\xi} = \int_{\mathbb{R}^d}   \Big(\cpf(-b) |\xi|^{-d-b}   -\cpf(-a) |\xi|^{-d-a}\Big)|\hat{\mu}(\xi)|^2\rd{\xi} > 0\,.
\end{equation}

Notice that $\cpf(s)>0$ for any $-2<s<d$, with $\cpf(-2)=0$ and
\begin{equation}
	\cpf(s)  \approx \cpf'(-2)(s+2),\quad \cpf'(-2)=\pi^{-2-\frac{d}{2}}\frac{\Gamma(\frac{d+2}{2})}{4}>0\,,
\end{equation}
for $s$ close to $-2$, to the leading order (c.f. Remark \ref{rmk_cpf}). This implies that $\cpf$ is an increasing function on $[-2,-2+\epsilon]$ for some $\epsilon>0$.  Then we apply Theorem \ref{thm_powercomp} to $\alpha=d+a$ and $\beta=d+b$ to get
\begin{equation}
	\int_{\mathbb{R}^d}|\xi|^{-d-a}|\hat{\mu}(\xi)|^2\rd{\xi} \le C_*\int_{\mathbb{R}^d}|\xi|^{-d-b}|\hat{\mu}(\xi)|^2\rd{\xi}\,.
\end{equation}
Here $C_* = \sup_{a\in [a_0,2)}(C_{d+a,d+b}R^{a-b}) >0$ is a uniform constant since $\alpha=d+a$ is bounded above by $d+2$ and thus away from $d+4$ (c.f. Remark \ref{rmk_powercomp}). Then we may take $a_-(b)\in [a_0,2)$ sufficiently close to 2 so that $2-a_-(b)<\epsilon$ and
{\begin{equation}
	\cpf(-a_-(b)) \le \frac{1}{2C_*}\cpf(-b)\,.
\end{equation}
It follows that $\cpf(-a) \le \cpf(-a_-(b)) \le \frac{1}{2C_*}\cpf(-b)$} for any $a\in [a_-(b),2)$, and we see that 
\begin{equation}
	\int_{\mathbb{R}^d\backslash \{0\}} \hat{W}_{a,b}(\xi)|\hat{\mu}(\xi)|^2\rd{\xi} > 0\,,
\end{equation}
which finishes the proof.

\end{proof}

\begin{remark}\label{rmk_sharp}
	Let $C_{\alpha,\beta}^\sharp>0$ be the sharp constant in Theorem \ref{thm_powercomp}. Then \eqref{EabFou} shows that
	\begin{equation}
		R_{\textnormal{LIC}}[W_{a,b}] = \Big(\frac{\cpf(-b)}{C_{d+a,d+b}^\sharp \cpf(-a)} \Big)^{1/(a-b)}\,,
	\end{equation}
	for any $-d<b<a<2$.
\end{remark}

\begin{proof}[Proof of Theorem \ref{thm_main3}]
	Theorem \ref{thm_existablog} gives $R_{*,\ln}(1)$ as an upper bound of minimizer size for any $W_{b,\ln}$ with $b\ge 1$. 
	
	We claim that for $b_*\in [1,2)$ sufficiently close to 2, we have 
	\begin{equation}\label{claim_RLIClog}
		R_{\textnormal{LIC}}[W_{b,\ln}] \ge R_{*,\ln}(1) + 1 =: R,\quad \forall b_* \le b < 2\,,
	\end{equation}
	which would imply the conclusion due to Lemma \ref{lem_LICR}.
	
	By Theorems \ref{thm_RLICab} and \ref{thm_powerF}, it suffices to prove that for any nontrivial signed measure $\mu$ supported on $\overline{B(0;R)}$ with $E_{a,b}[|\mu|]<\infty$ and $\int_{\mathbb{R}^d}\rd{\mu(\bx)}=\int_{\mathbb{R}^d}\bx\rd{\mu(\bx)}=0$, there holds
	\begin{equation}\label{EabFoulog}
		\int_{\mathbb{R}^d\backslash \{0\}} \hat{W}_{b,\ln}(\xi)|\hat{\mu}(\xi)|^2\rd{\xi} = \int_{\mathbb{R}^d}   \Big(\tcpf(-b) |\xi|^{-d-b}   +\cpf(-b) |\xi|^{-d-b}\ln|\xi|\Big)|\hat{\mu}(\xi)|^2\rd{\xi} > 0\,.
	\end{equation}
	The properties of $\cpf(s)$ near $s=-2$ were analyzed in the proof of Theorem \ref{thm_main1}. We recall that $\tcpf(-2)=\cpf'(-2)>0$ from \eqref{eqtcpf2}. 	Therefore one can take $\epsilon>0$ such that $\cpf$ is increasing on $[-2,-2+\epsilon]$ (as discussed in the proof of Theorem \ref{thm_main1}), and $\tcpf(s) \ge \frac{1}{2}\cpf'(-2) > 0$ for $s\in [-2,-2+\epsilon]$. 
	
	Then we apply Theorem \ref{thm_powercomplog} to $\beta=d+b$ to get
	\begin{equation}
		\int_{\mathbb{R}^d}|\xi|^{-d-b}\ln|\xi|\cdot|\hat{\mu}(\xi)|^2\rd{\xi} + 	C_*\int_{\mathbb{R}^d}|\xi|^{-d-b}|\hat{\mu}(\xi)|^2\rd{\xi} \ge 0\,,
	\end{equation}
	for any $1\le b < 2$. Here $C_*=\sup_{1\le b < 2}C_{d+b} + \ln R>0$ is a uniform constant since $\beta$ is bounded above by $d+2$ and thus away from $d+4$ (c.f. Remark \ref{rmk_powercomplog}). Then we may take $b_*\in [1,2)$ sufficiently close to 2 so that $2-b_*<\epsilon$ and
	\begin{equation}
		\cpf(-b_*) \le \frac{1}{4C_*}\cpf'(-2)\,.
	\end{equation}
	It follows that 
	\begin{equation}
		\cpf(-b) \le \cpf(-b_*) \le \frac{1}{4C_*}\cpf'(-2) \le \frac{1}{2C_*}\tcpf(-b)\,,
	\end{equation} 
	for any $b\in [b_*,2)$, and we see that 
	\begin{equation}
		\int_{\mathbb{R}^d\backslash \{0\}} \hat{W}_{b,\ln}(\xi)|\hat{\mu}(\xi)|^2\rd{\xi} > 0\,,
	\end{equation}
	which finishes the proof.
	
\end{proof}

\section{Proof of Theorem \ref{thm_main2}}\label{sec_proof2}

Compared to the proof of Theorem \ref{thm_main1}, the main difficulty to handle $|\bx|^a$ with $a>4$ is that it is not Fourier representable. We will first truncate this attractive potential and then estimate its Fourier transform. To gain smallness in this estimate when $a$ is close to 4, we need to subtract $|\bx|^4$ (which we know to have the LIC) from the potential. Such smallness will play a crucial role in the proof of Theorem \ref{thm_main2}, similar to the smallness of $\cpf(-a)$ in \eqref{EabFou} in the proof of Theorem \ref{thm_main1}.

\begin{lemma}\label{lem_a4}
	Let $a>4$, and $\Phi$ be a truncation function (see the last paragraph of Section \ref{sec_intro}). Then there exists $\eta_a>0$ such that
	\begin{equation}
		\Big|\cF[(|\bx|^a-|\bx|^4)\Phi(\bx)](\xi)\Big| \le \eta_a (1+|\xi|)^{-d-a}\,.
	\end{equation}
	Furthermore, $\eta_a = O(a-4)$ when $a$ is close to 4.
\end{lemma}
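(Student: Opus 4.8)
The plan is to split off the exact homogeneous singularity of $|\bx|^a-|\bx|^4$ at the origin, whose Fourier transform produces both the decay $(1+|\xi|)^{-d-a}$ and the smallness $O(a-4)$, from the remaining piece which is smooth and can be handled by applying enough Laplacians. Write $h_a(\bx):=|\bx|^a-|\bx|^4$, a tempered distribution, so that the function in the statement is $g_a:=h_a\Phi=h_a-h_a(1-\Phi)$. Since $g_a\in L^1$ has compact support, $\hat g_a$ is a bounded continuous function, and $g_a\not\equiv 0$ for $a\ne 4$, so once a pointwise bound $|\hat g_a(\xi)|\le M_a(1+|\xi|)^{-d-a}$ with $M_a=O(a-4)$ is established one may take $\eta_a=M_a>0$ (its $\|g_a\|_{L^1}$ contribution is already positive).

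Because $|\bx|^4$ is a polynomial, $\cF[|\bx|^4]$ is supported at $\{0\}$, so Theorem \ref{thm_powerF} (with $s=-a$, admissible as $\Re(-a)<0$) gives $\hat h_a(\xi)=c_a|\xi|^{-d-a}$ for $\xi\ne 0$, with $c_a=-a\,\cpf(-a)$; since $\cpf$ has a simple zero at $s=-4$ (Remark \ref{rmk_cpf}), $|c_a|=O(a-4)$, hence $|\hat h_a(\xi)|\le 2^{d+a}|c_a|(1+|\xi|)^{-d-a}$ for $|\xi|\ge 1$. Next, $h_a(1-\Phi)$ is smooth, vanishes near the origin, equals $|\bx|^a-|\bx|^4$ for $|\bx|\ge 2$, and all its derivatives grow at most polynomially, so it is a tempered distribution. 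Fix an integer $k\ge 3$ with $2k>d+a$ (chosen constant in $a$ on a small right-neighbourhood of $4$). Using $\Delta^k|\bx|^a=(a-4)\gamma_k(a)|\bx|^{a-2k}$, where $\gamma_k$ is a polynomial in $a$ bounded near $a=4$ (the factor $a-4$ occurs because one of the factors in $\prod_{j<k}(a-2j)(a+d-2-2j)$ is $a-4$) and in particular $\Delta^k|\bx|^4=0$, the Leibniz expansion of $\Delta^k\big(h_a(1-\Phi)\big)$ has main term $(a-4)\gamma_k(a)|\bx|^{a-2k}(1-\Phi)$, which lies in $L^1$ with norm $O(a-4)$ since $2k>d+a$; every other term is supported in $\{1\le|\bx|\le 2\}$, where $h_a$ and all its derivatives are $O(a-4)$ (they vanish at $a=4$ and depend smoothly on $a$), hence those terms are $C_c^\infty$ with $L^1$ norm $O(a-4)$. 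Thus $\|\Delta^k(h_a(1-\Phi))\|_{L^1}=O(a-4)$, and dividing the identity $\cF[\Delta^k(h_a(1-\Phi))]=(-4\pi^2|\xi|^2)^k\,\cF[h_a(1-\Phi)]$ by $(-4\pi^2|\xi|^2)^k$ on $\{\xi\ne 0\}$ yields $|\cF[h_a(1-\Phi)](\xi)|\le O(a-4)\,|\xi|^{-2k}\le 2^{d+a}\,O(a-4)\,(1+|\xi|)^{-d-a}$ for $|\xi|\ge 1$.

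Combining the two estimates controls $\hat g_a=\hat h_a-\cF[h_a(1-\Phi)]$ on $\{|\xi|\ge 1\}$ by $O(a-4)(1+|\xi|)^{-d-a}$, while for $|\xi|\le 1$ one uses $|\hat g_a(\xi)|\le\|g_a\|_{L^1}\le\int_{B(0;2)}\big||\bx|^a-|\bx|^4\big|\rd\bx=O(a-4)$ (a direct computation) together with $(1+|\xi|)^{-d-a}\ge 2^{-d-a}$; this gives the claim with some $\eta_a=O(a-4)$. The step I expect to require the most care is the estimate of $\cF[h_a(1-\Phi)]$: one must simultaneously extract the correct decay exponent (not merely a negative power, which is why $k$ is taken with $2k>d+a$) and the factor $a-4$, which hinges on the algebraic facts that $\Delta^k|\bx|^4$ vanishes and $\Delta^k|\bx|^a$ carries a factor $a-4$ once $k\ge 3$, and on keeping all constants uniform for $a$ near $4$.
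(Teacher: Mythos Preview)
Your proof is correct and takes a genuinely different route from the paper. The paper applies $\Delta^k$ (with $k=1+\lfloor a/2\rfloor$ for $d\ge 2$, and $k$ ordinary derivatives with $k=1+\lfloor a\rfloor$ for $d=1$) directly to $g_a=(|\bx|^a-|\bx|^4)\Phi$, then splits the resulting compactly supported object into an inner piece on $\{|\bx|\le 1\}$, equal to $\Delta^k|\bx|^a\cdot\Phi(2\bx)$, whose Fourier transform is analyzed as the convolution $|\xi|^{-d+2k-a}*\hat\Phi(\cdot/2)$, and a smooth outer piece on $\{1/2<|\bx|\le 2\}$. This forces separate treatment of $d\ge 3$, $d=2$, and $d=1$ because of the pole of $\cpf$ at $s=d$ and the sign factor in odd-order derivatives. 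Your decomposition $g_a=h_a-h_a(1-\Phi)$ instead isolates the exact homogeneous distribution $h_a=|\bx|^a-|\bx|^4$, whose Fourier transform on $\{\xi\ne 0\}$ is read off from Theorem~\ref{thm_powerF} and already carries both the decay $|\xi|^{-d-a}$ and the smallness $c_a=-a\,\cpf(-a)=O(a-4)$ (via the simple zero of $\cpf$ at $-4$); the remainder $h_a(1-\Phi)$ is globally smooth, so applying $\Delta^k$ to it never creates a singularity at the origin and the $L^1$ estimate is immediate once $2k>d+a$. Your argument is therefore dimension-uniform and sidesteps the convolution analysis, while the paper's argument is slightly more self-contained in that it only invokes the Fourier transform of $|\bx|^{-s}$ in the locally integrable range $0<s<d$ (or its odd analogue in $d=1$) rather than for $s=-a<0$.
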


\begin{proof}
	The function $(|\bx|^a-|\bx|^4)\Phi(\bx)$ is compactly supported and continuous, and thus its Fourier transform is given by
	\begin{equation}\label{Fxa4}
		\cF[(|\bx|^a-|\bx|^4)\Phi(\bx)](\xi)  = \int_{\mathbb{R}^d}(|\bx|^a-|\bx|^4)\Phi(\bx) e^{-2\pi i \bx\cdot\xi} \rd{\bx}\,.
	\end{equation}
	
	We first assume $d\ge 2$. Let
	\begin{equation}
		k = 1+\lfloor \frac{a}{2} \rfloor\,.
	\end{equation}	
	Multiplying by $(-4\pi^2|\xi|^2)^k$ in \eqref{Fxa4} and integrating by parts, we get
	\begin{equation}\begin{split}
		(-4\pi^2|\xi|^2)^k\cF[(|\bx|^a-|\bx|^4)\Phi(\bx)](\xi)  = & \int_{\mathbb{R}^d}(|\bx|^a-|\bx|^4)\Phi(\bx) 	\Delta^k e^{-2\pi i \bx\cdot\xi} 
		 \rd{\bx} \\
		  = & \int_{\mathbb{R}^d}	\Delta^k\big((|\bx|^a-|\bx|^4)\Phi(\bx)\big)  e^{-2\pi i \bx\cdot\xi} 
		 \rd{\bx}\,. \\
	\end{split}\end{equation}
	Here the integration by parts is legal because the integrand is compactly supported and $\Delta^k\big((|\bx|^a-|\bx|^4)\Phi(\bx)\big)$ is  locally integrable in $d\ge 2$, by the choice of $k$. Since $\Phi$ is supported on $\overline{B(0;2)}$, the last integrand is zero for any $|\bx|>2$. Then we estimate its size by separating it into
	\begin{equation}\label{Deltaa4}\begin{split}
		\int_{\mathbb{R}^d} &	\Delta^k\big((|\bx|^a-|\bx|^4)\Phi(\bx)\big)  e^{-2\pi i \bx\cdot\xi} 
		\rd{\bx} \\
		= & \int_{|\bx|\le 1}	\Delta^k\big((|\bx|^a-|\bx|^4)\Phi(\bx)\big)\cdot\Phi(2\bx)  e^{-2\pi i \bx\cdot\xi}\rd{\bx} \\
		& + \int_{\frac{1}{2}<|\bx|\le 2}	\Delta^k\big((|\bx|^a-|\bx|^4)\Phi(\bx)\big)\cdot(1-\Phi(2\bx))  e^{-2\pi i \bx\cdot\xi}
		\rd{\bx}\,.
	\end{split}\end{equation}
	
	First, since $\Phi=1$ on $B(0;1)$ and $k\ge 3$,
	\begin{equation}\begin{split}
		\int_{|\bx|\le 1} &	\Delta^k\big((|\bx|^a-|\bx|^4)\Phi(\bx)\big)\cdot\Phi(2\bx)  e^{-2\pi i \bx\cdot\xi}\rd{\bx} =  \int_{|\bx|\le 1}	\Delta^k(|\bx|^a)\cdot\Phi(2\bx)  e^{-2\pi i \bx\cdot\xi} 
		\rd{\bx}\,.
	\end{split}\end{equation}
	If $a>4$ is an even integer, then $a=2(k-1)$ and the last integrand is zero. Otherwise, we have $-2<a-2k<0$, and then
	\begin{equation}\begin{split}
		\int_{|\bx|\le 1} &	\Delta^k\big((|\bx|^a-|\bx|^4)\Phi(\bx)\big)\cdot\Phi(2\bx)  e^{-2\pi i \bx\cdot\xi}\rd{\bx} \\
		= & \prod_{j=0}^{k-1} (a-2j)(a+d-2-2j) \cdot \int_{|\bx|\le 1}	|\bx|^{a-2k}\Phi(2\bx)  e^{-2\pi i \bx\cdot\xi} 
		\rd{\bx} \\
		= &  \prod_{j=0}^{k-1} (a-2j)(a+d-2-2j) \cdot	\cF[|\bx|^{a-2k}\Phi(2\bx)](\xi)\,. \\
	\end{split}\end{equation}
	The Fourier transform of $|\bx|^{a-2k}$ is a locally integrable function given by Theorem \ref{thm_powerF}, and then
	\begin{equation}
		\cF[|\bx|^{a-2k}\Phi(2\bx)] = (2k-a)\cpf(2k-a)2^{-d}\cdot |\xi|^{-d+2k-a}*\hat{\Phi}(\frac{\xi}{2})\,.
	\end{equation}
	
	{Then we claim that there exists $C_1>0$, uniform in $a$ for $-2<a-2k<0$, such that
		\begin{equation}\label{claim_2ka}
			\Big|(2k-a)\big(|\cdot|^{-d+2k-a}*\hat{\Phi}(\frac{\cdot}{2})\big)(\xi)\Big| \le C_1(1+|\xi|)^{-d+2k-a}\,.
		\end{equation}
	In fact, we first apply Lemma \ref{lem_psi} with $\psi_1 = \hat{\Phi}(\frac{\cdot}{2})$, $\psi_2=(2k-a)|\cdot|^{-d+2k-a}$, $M=-d+2k-a$, $R=1$ to obtain the above inequality for $|\xi|>3$.
	By tracing through the proof of Lemma \ref{lem_psi}, the constant $C_1$ only depends on $\psi_2$ through the implied constant in \eqref{lem_psi_0} and $\int_{B(0;1)}|\psi_2(\bx)|\rd{\bx}$, which are both uniform in $a$ for our choice of $\psi_2$. Therefore $C_1$ can be chosen uniformly in $a$.
	
	To see this inequality for $|\xi|\le 3$, we write its LHS as
	\begin{equation}
		|(\psi_1*\psi_2)(\xi)| \le |(\psi_1*(\psi_2\chi_{B(0;1)})(\xi)| + |(\psi_1*(\psi_2\chi_{B(0;1)^c})(\xi)|\,.
	\end{equation}
	By Young's inequality, we have
	\begin{equation}
		|(\psi_1*(\psi_2\chi_{B(0;1)})(\xi)| \le \|\psi_1\|_{L^\infty(\mathbb{R}^d)} \cdot \|\psi_2\|_{L^1(B(0;1))} \le C\,,
	\end{equation}
	and
	\begin{equation}
		|(\psi_1*(\psi_2\chi_{B(0;1)^c})(\xi)| \le \|\psi_1\|_{L^1(\mathbb{R}^d)} 	\cdot \|\psi_2\|_{L^\infty(B(0;1)^c)} \le C\,,
	\end{equation}
	uniform in $a$. Therefore the claim \eqref{claim_2ka} is proved.
	}


	 To analyze the behavior of the $\cpf$ part,
	\begin{itemize}
		\item If $d\ge 3$, then $2k-a\in (0,2)$ is strictly inside $\{s\in\mathbb{C}:\Re(s) < d\}$ where $\cpf$ is holomorphic, and thus $\cpf(2k-a)$ is uniformly bounded.
		Therefore 
		\begin{equation}\begin{split}
				\left|\int_{|\bx|\le 1}	\Delta^k\big((|\bx|^a-|\bx|^4)\Phi(\bx)\big)\cdot\Phi(2\bx)  e^{-	2\pi i \bx\cdot\xi}\rd{\bx} \right| \le \eta_{a,1}(1+|\xi|)^{-d+2k-a}\,,
		\end{split}\end{equation}
		where 
		\begin{equation}\label{etaa1_d3}
			\eta_{a,1} = \prod_{j=0}^{k-1} (a-2j)(a+d-2-2j)\cdot C_1 \cdot \sup_{	0<s<2}\cpf(s)\,.
		\end{equation}
		\item If $d=2$, then $2k-a\in (0,2)$ is inside $\{s\in\mathbb{C}:\Re(s) < d\}$ but can come close to the point 2 where $\cpf$ has a simple pole (coming from the expression $\Gamma(\frac{d-s}{2})$ in its definition). Therefore, using $\cpf(s)\lesssim (2-s)^{-1}$, we get
		\begin{equation}\begin{split}
				\left|\int_{|\bx|\le 1}	\Delta^k\big((|\bx|^a-|\bx|^4)\Phi(\bx)\big)\cdot\Phi(2\bx)  e^{-	2	\pi i \bx\cdot\xi}\rd{\bx} \right| \le \eta_{a,1}(1+|\xi|)^{-d+2k-a}\,,
		\end{split}\end{equation}
		where 
		\begin{equation}\label{etaa1_d2}
			\eta_{a,1} = \prod_{j=0}^{k-1} (a-2j)^2\cdot C_1 \cdot \frac{1}{2-2k+a}\sup_{	0<s	<2}(2-s)\cpf(s)\,.
		\end{equation}
	\end{itemize}
	Notice that in both cases we have
	\begin{equation}
		\eta_{a,1} = O(a-4)\,,
	\end{equation}
	when $a$ is close to 4. In fact, we have $k=3$ when $a$ is close to 4. For $d\ge 3$, the product term in \eqref{etaa1_d3} contains a factor $(a-4)$. For $d=2$, the product term in \eqref{etaa1_d2} contains a factor $(a-4)^2$ and there is another $\frac{1}{a-4}$ in \eqref{etaa1_d2}.
	
	To estimate the last integral in \eqref{Deltaa4}, we observe that $\Delta^k\big((|\bx|^a-|\bx|^4)\Phi(\bx)\big)\cdot(1-\Phi(2\bx))$ is smooth on $\mathbb{R}^d$ and supported on $\{\frac{1}{2}\le |\bx| \le 2\}$. Therefore, the last integral in \eqref{Deltaa4}, being the Fourier transform of this function, is a Schwartz function in $\xi$. Therefore there exists $\eta_{a,2}>0$ such that
	\begin{equation}\begin{split}
		\left|\int_{\frac{1}{2}<|\bx|\le 2}	\Delta^k\big((|\bx|^a-|\bx|^4)\Phi(\bx)\big)\cdot(1-\Phi(2\bx))  e^{-2\pi i \bx\cdot\xi}
		\rd{\bx}\right| \le \eta_{a,2}(1+|\xi|)^{-d+2k-a}\,.
	\end{split}\end{equation}
	To analyze the behavior of $\eta_{a,2}$ for $a$ close to 4, we first write
	\begin{equation}\begin{split}
		& \left|(1+4\pi^2|\xi|^2)^m  \int_{\frac{1}{2}<|\bx|\le 2}	\Delta^k\big((|\bx|^a-|\bx|^4)\Phi(\bx)\big)\cdot(1-\Phi(2\bx))  e^{-2\pi i \bx\cdot\xi}
		\rd{\bx}\right|\\
		= & \left|\int_{\frac{1}{2}<|\bx|\le 2}	 (1-\Delta)^m\Big[\Delta^k\big((|\bx|^a-|\bx|^4)\Phi(\bx)\big)\cdot(1-\Phi(2\bx))\Big]  e^{-2\pi i \bx\cdot\xi}
		\rd{\bx}\right|\\
		\le & \int_{\frac{1}{2}<|\bx|\le 2}	 \left|(1-\Delta)^m\Big[\Delta^k\big((|\bx|^a-|\bx|^4)\Phi(\bx)\big)\cdot(1-\Phi(2\bx))\Big]  \right|
		\rd{\bx}\,,\\
	\end{split}\end{equation}
	for any integer $m\ge 0$. The last integrand is a linear combination of products derivatives of $|\bx|^a-|\bx|^4, \Phi(\bx), 1-\Phi(2\bx)$. Each derivative of $\Phi(\bx)$ or $1-\Phi(2\bx)$ is bounded, and each derivative of $|\bx|^a-|\bx|^4$ is of order $O(a-4)$ for $\frac{1}{2}<|\bx|\le 2$. Therefore the last integral is $O(a-4)$ for any fixed $m$. By taking $m\ge \frac{d-2k+a}{2}$, we see that $\eta_{a,2}=O(a-4)$.
	
	Combining the above two parts, we see that the LHS of \eqref{Deltaa4} is bounded by $\eta_{a,0} (1+|\xi|)^{-d+2k-a}$ with $\eta_{a,0}=O(a-4)$ when $a$ is close to 4. Therefore $|\cF[(|\bx|^a-|\bx|^4)\Phi(\bx)](\xi)| \lesssim \eta_a |\xi|^{-2k}(1+|\xi|)^{-d+2k-a}$ with $\eta_a=O(a-4)$. Combined with the straightforward estimate $|\cF[(|\bx|^a-|\bx|^4)\Phi(\bx)](\xi)| \lesssim a-4$ for $|\xi|\le 1$, we get the conclusion when $d\ge 2$.
	
	Then we treat the case $d=1$ by a similar technique. Let
	\begin{equation}
		k = 1+\lfloor a \rfloor\,.
	\end{equation}	
	Multiplying by $(2\pi i \xi)^k$ in \eqref{Fxa4} and integrating by parts, we get
	\begin{equation}\begin{split}
			(2\pi i \xi)^k\cF[(|x|^a-|x|^4)\Phi(x)](\xi)  = & \int_{\mathbb{R}}(|x|^a-|x|^4)\Phi(x) 	\big(-\frac{\rd}{\rd x}\big)^k e^{-2\pi i x \xi} 
			\rd{x} \\
			= & \int_{\mathbb{R}}	\big(\frac{\rd}{\rd x}\big)^k\big((|x|^a-|x|^4)\Phi(x)\big)  e^{-2\pi i x \xi} 
			\rd{x}\,. \\
	\end{split}\end{equation}
	Then the conclusion can obtained by a decomposition similar to \eqref{Deltaa4}. In fact, 	if $a$ is not an integer, then
	\begin{equation}\label{Deltaa4d1_2}
		 \int_{|x|\le 1}	\big(\frac{\rd}{\rd x}\big)^k\big((|x|^a-|x|^4)\Phi(x)\big)\Phi(2x)  e^{-2\pi i x \xi} 
		\rd{x} = \prod_{j=0}^{k-1}(a-j) \cdot  \cF[|x|^{a-k}(\sgn x)^k\Phi(2x)](\xi)\,,
	\end{equation}
	where $-1<a-k<0$. If $k$ is even, then the last expression is bounded by $\eta_{a,1}(1+|\xi|)^{-1+k-a}$ similar as its $d\ge 2$ analog. If $k$ is odd (which includes the case when $a$ is close 4, $k=5$), we first use the formula
	\begin{equation}
		\cF[|x|^{a-k}(\sgn x)] = -2i \pi^{k-a-\frac{1}{2}}\frac{\Gamma(\frac{2-k+a}{2})}{(1+a-k)\Gamma(\frac{k-a-1}{2})}|\xi|^{-1+k-a}\sgn\xi\,,
	\end{equation}
	which can be obtained by \eqref{thm_powerF_1} via differentiating in $x$. Then
	\begin{equation}
		\cF[|x|^{a-k}(\sgn x)^k\Phi(2x)](\xi) = -i \pi^{k-a-\frac{1}{2}}\frac{\Gamma(\frac{2-k+a}{2})}{(1+a-k)\Gamma(\frac{k-a-1}{2})}\big(|\xi|^{-1+k-a}\sgn\xi\big)* \hat{\Phi}(\frac{\xi}{2})\,,
	\end{equation}
	is bounded by $C_1|\xi|^{-1+k-a}$ uniformly in $a$ for $-1<a-k<0$. In fact, one technical issue here is that $|\xi|^{-1+k-a}\sgn\xi$ has a sharp singularity near 0 when $a-k$ is close to 0. It does not affect the uniform estimate for the convolution because $|\xi|^{-1+k-a}\sgn\xi$ is odd and $\hat{\Phi}(\frac{\xi}{2})$ is smooth.
	
	Then we see that \eqref{Deltaa4d1_2} is again bounded by $\eta_{a,1}(1+|\xi|)^{-1+k-a}$, with $\eta_{a,1}=O(a-4)$ when $a$ is close to 4 since the product in \eqref{Deltaa4d1_2} contains a factor $(a-4)$.
	
	If $a$ is an integer, then $k=1+a$. If $a$ is even, then the LHS of \eqref{Deltaa4d1_2} is zero. If $a$ is odd, then $(\frac{\rd}{\rd x})^k\big((|x|^a-|x|^4)\Phi(x)\big)\Phi(2x) = 2\cdot a! \cdot \delta(x)$ and thus the LHS of \eqref{Deltaa4d1_2} is $2\cdot a!$, which is again bounded by $\eta_{a,1}(1+|\xi|)^{-1+k-a}$ with $\eta_{a,1}=2\cdot a!$.
	
	The term $ \int_{1/2 < |x|\le 2}	(\frac{\rd}{\rd x})^k\big((|x|^a-|x|^4)\Phi(x)\big)\cdot(1-\Phi(2x))  e^{-2\pi i x \xi} 
	\rd{x}$ can be handle in the same way as the $d\ge 2$ case, and the conclusion for $d= 1$ follows.
		
\end{proof}

\begin{proof}[Proof of Theorem \ref{thm_main2}]
	
	Theorem \ref{thm_existab} shows that $R_*(b,4)$ is an upper bound of minimizer size for any $W_{a,b}$ with $a\ge 4$. 
	
	We claim that for $a_+(b)>4$ sufficiently close to 4, we have 
	\begin{equation}\label{claim_RLIC}
		R_{\textnormal{LIC}}[W_{a,b}] \ge R_*(b,4) +1 =: R,\quad \forall 4<a \le a_+(b)\,,
	\end{equation}
	which would imply the conclusion due to Lemma \ref{lem_LICR}.
	
	By Theorem \ref{thm_RLICab}, it suffices to prove that for any nontrivial signed measure $\mu$ supported on $\overline{B(0;R)}$ with $E_{a,b}[|\mu|]<\infty$ and $\int_{\mathbb{R}^d}\rd{\mu(\bx)}=\int_{\mathbb{R}^d}\bx\rd{\mu(\bx)}=0$, there holds
	\begin{equation}
		E_{a,b}[\mu] > 0\,.
	\end{equation}
	Denote
	\begin{equation}
		\tilde{W}_{a,b} = (2R)^{a-b}\frac{|\bx|^a}{a} - \frac{|\bx|^b}{b}\,,
	\end{equation}
	and its associated energy $\tilde{E}_{a,b}$. By rescaling, it suffices to prove that for any $\mu$ supported on $\overline{B(0;\frac{1}{2})}$ with $\tilde{E}_{a,b}[|\mu|]<\infty$ and $\int_{\mathbb{R}^d}\rd{\mu(\bx)}=\int_{\mathbb{R}^d}\bx\rd{\mu(\bx)}=0$, there holds
	\begin{equation}
		\tilde{E}_{a,b}[\mu] > 0\,.
	\end{equation}
	For such $\mu$, we may rewrite $\tilde{E}_{a,b}[\mu]$ as
	\begin{equation}\begin{split}
		\tilde{E}_{a,b}[\mu] = & E_b[\mu] + \frac{(2R)^{a-b}}{2a}\int_{|\bx|\le 1/2}\int_{|\by|\le 1/2}|\bx-\by|^a\rd{\mu(\by)}\rd{\mu(\bx)} \\
		= & E_b[\mu] +  \frac{(2R)^{a-b}}{2a}\int_{|\bx|\le 1/2}\int_{|\by|\le 1/2}(|\bx-\by|^a-|\bx-\by|^4)\Phi(\bx-\by)\rd{\mu(\by)}\rd{\mu(\bx)} \\ & + \frac{(2R)^{a-b}}{2a}\int_{|\bx|\le 1/2}\int_{|\by|\le 1/2}|\bx-\by|^4\rd{\mu(\by)}\rd{\mu(\bx)} \,,\\
	\end{split}\end{equation}
	where $\Phi$ is given by the statement of Lemma \ref{lem_a4}, {and $E_b[\mu]$ denotes the energy \eqref{E} with interaction potential $W(\bx)=-\frac{|\bx|^b}{b}$}. The last integral is nonnegative by the (nonstrict) LIC property of the interaction potential $|\bx|^4$ (c.f. \cite[Theorem 2.4]{Lop19}). Then notice that the potential $(|\bx|^a-|\bx|^4)\Phi(\bx)$ is Fourier representable at level 0 due to Theorem \ref{thm_convFou1}, whose Fourier transform is controlled by Lemma \ref{lem_a4}. Therefore we get
	\begin{equation}\begin{split}
		\tilde{E}_{a,b}[\mu] \ge \frac{1}{2}\int_{\mathbb{R}^d\backslash\{0\}} \Big(\cpf(-b)|\xi|^{-d-b} -  \frac{(2R)^{a-b}}{a}\eta_a (1+|\xi|)^{-d-a}\Big)|\hat{\mu}(\xi)|^2\rd{\xi}\,,
	\end{split}\end{equation}
	where $\eta_a = O(a-4)$ when $a$ is close to 4. Since $a>4$ and $-d<b<2$, we see that $|\xi|^{-d-b} \ge (1+|\xi|)^{-d-b} \ge (1+|\xi|)^{-d-a}$ for any $\xi\ne 0$. Also, $\cpf(-b)>0$, and $\frac{(2R)^{a-b}}{a}$ is uniformly bounded when $a>4$ is close to 4. Therefore, using $\eta_a = O(a-4)$, one can take $a_+(b)$ sufficiently close to 4 to guarantee that $\frac{(2R)^{a-b}}{a}\eta_a \le \frac{1}{2}\cpf(-b)$ for any $4<a\le a_+(b)$, and thus $\tilde{E}_{a,b}[\mu]>0$.

\end{proof}

\begin{remark}
	We believe that the approach in this section can also be used to prove Theorem \ref{thm_main1}. However, this approach does not yield a clean description for the precise value of the LIC radius, as discussed in Remark \ref{rmk_sharp}.
\end{remark}

\section*{Acknowledgment}

The author would like to thank Rupert L. Frank and Ryan W. Matzke for helpful discussions.

\bibliographystyle{alpha}
\bibliography{minimizer_book_bib.bib}

\end{document}